\begin{document}
    \title[\hfilneg  \hfil Bresse-Timoshenko type systems with thermodiffusion effects]
    {Bresse-Timoshenko type systems with thermodiffusion effects: Well-possedness, stability and numerical results}

 \author[M. Elhindi et {\em al.}\hfil  \hfilneg]    {M. Elhindi, Kh. Zennir, D. Ouchenane, A. Choucha and T. EL Arwadi}

\address{Mohammad Elhindi\newline  Department of Mathematics and Computer	Science, Faculty of Science, Beirut Arab University, Beirut, Lebanon  } \email{myh223@student.bau.edu.lb}

\address{Khaled zennir \newline	Department of Mathematics, College of Sciences and Arts, Qassim University, Ar-Rass, Saudi Arabia.\newline 	Laboratoire de Math\'ematiques Appliqu\'ees et de Mod\'elisation, 	Universit\'e 8 Mai 1945 Guelma. B.P. 401 Guelma 24000 Alg\'erie}\email{khaledzennir4@gmail.com}

\address{Djamel Ouchenane \newline Laboratory of pure and applied mathematic, Laghouat University, Algeria} \email{ouchenanedjamel@gmail.com}

\address{Abdelbaki Choucha \newline 	Department of Mathematics, Faculty of Exact Sciences, University of El Oued, B.P. 789, El Oued 39000, Algeria} \email{abdelbaki.choucha@gmail.com}

\address{Toufic EL Arwadi \newline	Department of Mathematics and Computer 	Science, Faculty of Science, Beirut Arab University, Beirut, Lebanon}\email{t.elarwadi@bau.edu.lb}

    \subjclass[2010]{35-XX, 93B05}
    \keywords{Bresse-Timoshenko type systems, Thermodiffusion effects, Well-possedness, Stability, Numerical results, Error}

    \begin{abstract}
Bresse-Timoshenko beam model with thermal, mass diffusion and theormoelastic effects is studied. We state and prove the well-posedness of problem. The global existence and uniqueness of the solution is proved by using the
classical Faedo-Galerkin approximations along with two a priori estimates. We prove an exponential stability estimate for problem under an unusual assumption, and by using a multiplier technique in two different cases, with frictional damping in the angular rotation and with frictional damping in the vertical displacement. In numerical parts, we first obtained a numerical scheme for problem by $P_1$-finite element method for space discretization and implicit Euler scheme for time discretization. Then, we showed that the discrete energy decays, later a priori error estimates are established. Finally , some numerical simulations are presented.
    \end{abstract}
    \maketitle
    \numberwithin{equation}{section}
    \newtheorem{theorem}{Theorem}[section]
    \newtheorem{lemma}[theorem]{Lemma}
    \newtheorem{definition}[theorem]{Definition}
    \newtheorem{remark}[theorem]{Remark}
    \allowdisplaybreaks

    \section{Introduction and position of problem }

    In engineering practice, when solving problems of the dynamics of composite mechanical structures, which are various kinds of connections,  questions arise on determining the characteristics of natural vibrations of such coupled systems. Note that problems related to the category of non-classical problems of mathematical physics, when we talk about the combination of elements, the behavior of which is described by equations of different type. This causes certain difficulties in solving them, therefore, in practice, models of real structures are used, simplified by introducing additional hypotheses and assumptions into consideration.\\
    Timoshenko \cite{Timoshenko1921}, was the first who introduced the system of the form
    \begin{equation}
    \left\{
    \begin{array}{l}
    \rho  \varphi_{tt}-\kappa( \varphi_{x}+\psi) _{x}=0\\
    I_{\rho}\varphi _{tt}- (EI\psi _{x})_x+\kappa(\varphi_{x}+\psi)  =0.
    \end{array}%
    \right. \label{Kirchh}
    \end{equation}
    Here, $\varphi$ is the transverse displacement of the beam and $\psi$ is the rotation angle of the filament of the beam. The coefficients $\rho, I_\rho, E, I$ and $\kappa$ are respectively the density (the mass
    per unit length), the polar moment of inertia of a cross section, Young's modulus of elasticity, the moment of inertia of a cross section, and the shear modulus. This kind of systems have been studied by a number of researchers and various damping mechanisms have been used to stabilize the vibrations. (\cite{Metrikine2001,Suiker1998})\\
    The Bresse system or the curved beam \cite{Bresse}, is modeled by the system
    \begin{equation}
    \left\{
    \begin{array}{l}
    \rho_{1} \varphi_{tt}-\kappa( \varphi_{x}+lw+\psi) _{x}-l\kappa_0( w_{x}-l\varphi) =0\\
    \rho_{2}\psi _{tt}-b\psi_{xx}+\kappa( \varphi_{x}+lw+\psi)  =0 \\
    \rho_{1}w_{tt}-\kappa_0( w_{x}-l\varphi)_x+l\kappa( \varphi_{x}+lw+\psi)=0.
    \end{array}%
    \right. \label{Bresse}
    \end{equation}
    The terms $\kappa_0( w_{x}-l\varphi), \kappa( \varphi_{x}+lw+\psi)$ and $b\psi_{x}$ denote the axial force, the shear force and the bending moment. The functions $\varphi, \psi$ and $w$ represent, respectively, the transverse displacement of a curved beam, the rotation angle of the
    filament and the longitudinal displacement. We denote by $\kappa_0=EH, \kappa=GH, b=EI$ and $\rho_{1}, \rho_{2}, l, G, E, H$ are positive constants characterizing physical properties of the beam and the filament. In addition, $l=1/R$, where $R$ is the radius of curvature. (\cite{Almeida2016,Arwadi2019})\\
    The coupled system from where one gets the Bresse-Timoshenko comes from Elishakoff \cite{Elishakoff} by combining d'Alembert's principle for dynamic equilibrium from Timoshenko hypothesis, resulting the coupled system
    \begin{equation}
    \left\{
    \begin{array}{l}
    \rho_{1} \varphi_{tt}-\kappa( \varphi_{x}+\psi) _{x}=0\\
    -\rho_{2}\varphi _{ttx}-b \psi _{xx}+\kappa(\varphi_{x}+\psi) =0.
    \end{array}%
    \right. \label{Bresse-Timoshenko}
    \end{equation}
    In the classical theory of thermoelasticity, the behavior of an elastic heat body can be described by a coupled system of hyperbolic-parabolic type, where the classical Fourier model of heat conduction is used, one most famous among them is the Cattaneo's law, which is unable to account for some physical properties and it cannot answer all questions, its uses are limited, this let us think to couple the fields of strain, temperature, and mass diffusion according to the Gurtin-Pinkin model.
   The stabilisation of the Bresse-Timoshenko model is studied only by few authors. We review the work in \cite{Aouadi2019}, a new Timoshenko beam model with thermal and mass diffusion effects according to the Gurtin-Pinkin model is proposed. The author proved global well-posedness of system by using the semigroup theory and also the quasistability. Despite the fact that a sufficient number of works have been devoted to the study of natural vibrations of a Breese-Timoshenko beam,  the problem of determining qualitative properties with thermal, mass diffusion and theormoelastic effects remains unsolved. \cite{Almeida2018,Almeida2019,Almeida20192,Choucha2020,Feng2020,Ramos2020}\\
    In \cite{Arwadi2019}, the authors studied stability of thermoviscoelastic Bresse beam system. The exponential decay of energy is proved and implicit Euler type scheme based on finite differences in time and finite elements in spaces is introduced to show that the discrete energy decreases in time and the author obtained an error estimates.\\
    In \cite{Feng2020}, Feng and {\em al.}, considered a Bresse-Timoshenko type system
    with time-dependent delay terms in $ ]0, L[ \times ]0, \infty[$
    \begin{equation}
    \left\{
    \begin{array}{l}
    \rho_{1} y_{tt}-\kappa( y_{x}+\psi) _{x}=0\\
    -\rho_{2}y _{ttx}-b \psi _{xx}+\kappa(y_{x}+\psi)+\mu_1 \psi_t +\mu_2\psi_t(t-\tau(t)) =0,\\
    \end{array}%
    \right. \label{0}
    \end{equation}
    and
    \begin{equation}
    \left\{
    \begin{array}{l}
    \rho_{1} y_{tt}-\kappa( y_{x}+\psi) _{x}+\mu_1 y_t +\mu_2y_t(t-\tau(t))=0 \\
    -\rho_{2}y _{ttx}-b \psi _{xx}+\kappa(y_{x}+\psi) =0.\\
    \end{array}%
    \right. \label{2}
    \end{equation}
    In both systems (\ref{0}) and (\ref{2}), the authors used an appropriate
    Lyapunov functional to prove an exponential decay results,  regardless of any relationship between wave propagation velocities.(See \cite{Almeida2017,Almeida2018,Almeida2019,Ramos2020}). The present article is a logical continuation of works \cite{Aouadi2019,Choucha2020,Feng2020}.\\
   We  introduce a new Bresse-Timoshenko beam model with thermal, mass diffusion and theormoelastic effects. The beam is modeled by the following system
    \begin{equation}
    \left\{
    \begin{array}{l}
    \rho_{1} \varphi_{tt}-\kappa( \varphi_{x}+\psi) _{x}=0\\
    -\rho_{2}\varphi _{ttx}-b \psi _{xx}+\kappa(\varphi_{x}+\psi)-\gamma\theta_{x}-\beta  C_{x}  =0\\
    \rho_{3}\theta_{t}+\varpi C_{t}-\kappa\theta_{xx}-\gamma\psi_{tx}=0\\
     C_{t}-h(\beta\psi_{x}+\rho C-\varpi\theta)_{xx}=0,
    \end{array}%
    \right. \label{sys1.11}
    \end{equation}
    where
    $$(x, t)\in(0, L)\times(0, \infty),$$
    where $L$ represents the distance between the ends of
    the center line of beam. The function $C$ denote the concentration of the diffusive material in the elastic body. Here $h > 0$ is the diffusion coefficient, $\varpi$ is a measure of the thermo-diffusion effect. In order to simplify the system we use the following relation between chemical potential $P$ and the
    concentration of the diffusion material $C$
    $$C=\frac{1}{\varrho}(P-\beta\psi_{x}+\varpi\theta).$$
    Here $\varrho$ is a measure of the diffusive effect, we put
    $$\alpha=b-\frac{\beta^{2}}{\varrho}, \hspace{0.2cm}\xi_{1}=\gamma+\frac{\beta\varpi}{\varrho}, \hspace{0.2cm}\xi_{2}=\frac{\beta}{\varrho}, \hspace{0.2cm}c=\rho_{3}+\frac{\varpi}{\varrho}, \hspace{0.2cm}r=\frac{1}{\varrho}.$$
    Substitute in (\ref{sys1.11}), the problem becomes
    \begin{equation}
    \left\{
    \begin{array}{l}
    \rho_{1} \varphi_{tt}-\kappa( \varphi_{x}+\psi) _{x}=0\\
    -\rho_{2}\varphi _{ttx}-\alpha \psi _{xx}+\kappa(\varphi_{x}+\psi)-\xi_{1}\theta_{x}-\xi_{2} P_{x} =0\\
    c\theta_{t}+d P_{t}- \kappa \theta_{xx}-\xi_{1}\psi_{tx}=0\\
    d\theta_{t}+rP_{t}-hP_{xx}-\xi_{2}\psi_{tx}=0.
    \end{array}%
    \right. \label{sys1.15}
    \end{equation}
   The aim of the paper is to study system (\ref{sys1.15}) with the following initial conditions
    \begin{equation}
    \left\{
    \begin{array}{l}
    \varphi\left( x, 0\right) =\varphi_{0}\left( x\right), \varphi_{t}\left( x, 0\right)
    =\varphi_{1}\left( x\right), \varphi_{tt}\left( x, 0\right)
    =\varphi_{2}\left( x\right) \\
   \psi \left( x, 0\right) =\psi _{0}\left( x\right), \psi_{t}\left( x, 0\right)
   =\psi_{1}\left( x\right)\\
    \theta(x, 0)=\theta_{0}(x),  P \left( x, 0\right) =P _{0}\left( x \right),
    x\in \left( 0, L\right),
    \end{array}
    \right. \label{con1.1}
    \end{equation}
    where $\varphi_{0}, \varphi_{1}, \psi_{0}, \psi_{1}, \theta_{0}, P_{0}$ are given
    functions,
    and the Dirichlet boundary conditions
    \begin{equation}
   \varphi\left( x, t\right) =\psi\left( x, t\right) =\theta\left( x, t\right) =P(x, t)=0, \ x=0, L, t>0.
    \label{sys1.14}
    \end{equation}
  For $cr-d^2>0$, we assume that the symmetric matrix
       \begin{equation}
    \Lambda=
    \left(
    \begin{array}{l}
    c\hspace{0.2cm} d\\
    d\hspace{0.2cm}r
    \end{array}
    \right),
    \label{eq9}
    \end{equation}
    is positive definite, and thus for all $\theta, P$
    \begin{equation}
    rP^{2}+c\theta^{2}+2dP\theta>0.
    \label{con}
    \end{equation}
In the present paper, a new  minimal conditions on dissipation and the relationship between the weights of system terms are used to show the global existence of solution by well known Faedo-Galerkin method combined with some estimates. By imposing a new appropriate conditions, which seems not be used in the literature. With the help of some special results, we obtained an unusual decay rate results using some properties of multiplier technique, extending some earlier results known in the existing literature. The main results in this manuscript are the following. Theorem \ref{the1} for the global existence of solution and Theorem \ref{the2}, Theorem \ref{the3} for the exponential decay rate for problem $(\ref{sys1.15})-(\ref{sys1.14})$ under the assumption (\ref{eq9}), (\ref{con}) with both cases, frictional damping in the angular rotation and frictional damping in the angular rotation. We obtained a numerical scheme for the problem by $P_1$-finite element method for space discretization and implicit Euler scheme for time discretization. Then, we showed that the discrete energy decays in Theorem \ref{Th4}. The error of the method where we propose an outline for the proof is studied. Finally, some numerical simulations are obtained using mathematica software.
   \section{Global well-posedness}
   We are now ready to state and prove the global well-posedness of problem (\ref{sys1.15})-\ref{sys1.14}).\\
   The dissipative nature of our system comes from the definition of the energy functional
   \begin{eqnarray}
  2 \mathcal{E}(t)&=& \rho_{1} \int_{0}^{L}\varphi_{t}^{2}dx +\frac{\rho_{1}\rho_{2}}{\kappa}\int_{0}^{L}\varphi_{tt}^{2}dx+\rho_{2} \int_{0}^{L}\varphi_{tx}^{2}dx\notag\\
   &&+\kappa\int_{0}^{L}(\varphi_{x}+\psi)^{2}dx
   + \alpha\int_{0}^{L}\psi_{x}^{2}dx\notag\\
   &&+\Big(c-\frac{d^2}{r}\Big)\int_{0}^{L}\theta^{2}dx+\int_{0}^{L}|r^{1/2}P 
   + dr^{-1/2}\theta |^2dx.
   \label{energy}
   \end{eqnarray}
  Multiplying the equations of (\ref{sys1.15}) by $\varphi _{t}, \psi_{t}, \theta, P$ respectively, using integration by parts, and (\ref{sys1.14}), we get
  \begin{equation}
  \left\{
  \begin{array}{l}
  \dfrac{\rho _{1}}{2}\dfrac{d}{dt}\displaystyle\int_{0}^{L}\varphi
  _{t}^{2}dx+\kappa \displaystyle\int_{0}^{L}(\varphi _{x}+\psi )\varphi
  _{tx}dx=0 \\
  \rho _{2}\displaystyle\int_{0}^{L}\varphi _{tt}\psi _{tx}dx+\frac{\alpha }{2
  }\frac{d}{dt}\displaystyle\int_{0}^{L}\psi _{x}^{2}dx+\kappa
  \int_{0}^{L}(\varphi _{x}+\psi )\psi _{t}dx \\
  \hspace{1cm}+\xi _{1}\int_{0}^{L}\theta\psi _{xt}dx+\xi _{2}\int_{0}^{L}P\psi _{xt}dx=0 \\
  \dfrac{c}{2}\dfrac{d}{dt}\displaystyle\int_{0}^{L}\theta ^{2}dx+d%
  \displaystyle\int_{0}^{L}P_{t}\theta dx+\kappa \displaystyle%
  \int_{0}^{L}\theta _{x}^{2}dx+\xi _{1}\displaystyle\int_{0}^{L}\psi
  _{t}\theta_x dx=0 \\
  \dfrac{r}{2}\dfrac{d}{dt}\displaystyle\int_{0}^{L}P^{2}dx+d\displaystyle%
  \int_{0}^{L}\theta _{t}Pdx+h\displaystyle\int_{0}^{L}P_{x}^{2}dx+\xi _{2}%
  \displaystyle\int_{0}^{L}\psi _{t}P_xdx=0.
  \end{array}%
  \right.  \label{sys1.16.0}
  \end{equation}%
  Taking the derivative of $(\ref{sys1.15})_{1}$, we get
  \begin{eqnarray}
  \rho _{1}\left( \varphi _{tt}\right) _{t}-\kappa (\varphi _{x}+\psi )_{xt}
  =0,  \label{KhaledZennir}
  \end{eqnarray}
  then
  \begin{eqnarray}
  \psi _{xt} &=&\frac{\rho _{1}\left( \varphi _{tt}\right) _{t}}{%
  	\kappa }-\frac{\left( \varphi _{x}\right) _{xt}}{\kappa }.  \notag
  \end{eqnarray}%
  Now, substituting (\ref{KhaledZennir}) in $(\ref{sys1.15})_{2},$ using
  integration by parts and summing, then we obtain that $\mathcal{E}$ is decreasing and given by
   \begin{eqnarray}
   \mathcal{E}(t)-\mathcal{E}(0) =-\kappa\int_{0}^{t}\int_{0}^{L}\theta_{x}^{2}(s)dxds
   -h\int_{0}^{t}\int_{0}^{L}P_{x}^{2}(s)dxds.\label{e'}
   \end{eqnarray}
We introduce the following Hilbert spaces
 \begin{eqnarray}
\mathcal{H}=H^{1}_{0}(0, L)\times H^{1}_{0}(0, L)\times L^{2}(0, L) \times H^{1}(0, L)\times L_*^{2}(0, L),
\end{eqnarray}
where 
$$
L_*^{2}(0, L)=\Big\{u\in L^{2}(0, L): \int_{0}^{L}udx=0\Big\}.
$$
Global well-posedness is given in the following.
\begin{theorem}\label{the1}
Assume that (\ref{eq9}), (\ref{con}) hold.
If the initial data $(\varphi_{0}, \varphi_{1}, \varphi_{2}, \psi_{0}, \psi_{1}) \in  \mathcal{H},$
 and $\theta_{0}, P_{0} \in L^{2}(0, L)\times L^{2}(0, L)$.
Then problem (\ref{sys1.15})-(\ref{sys1.14}) has a unique weak solution such that
\begin{equation}
\varphi, \varphi_t \in L^\infty(\mathbb{R}_+, H^1_0(0, L)),  \nonumber
\end{equation} 
\begin{equation}
\psi \in L^\infty(\mathbb{R}_+, H^1(0, L)),  \nonumber
\end{equation}
\begin{equation}
\psi_t \in L^\infty (\mathbb{R}_+, L_*^2(0,L)),  \nonumber
\end{equation}
\begin{equation}
\varphi_{tt}, \theta, P \in C(\mathbb{R}_+, L^2(0, L)). \nonumber
\end{equation}
   In addition,the unique solution $(\varphi, \varphi_{t}, \varphi_{tt}, \psi, \psi_t, \theta, P)$ depends
continuously on the initial data in $\mathcal{H}\times
L^{2}(0, L)\times L^{2}(0, L)$.
    \end{theorem}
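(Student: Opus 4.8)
The plan is to prove Theorem~\ref{the1} by the Faedo–Galerkin method in the usual three stages — build finite-dimensional approximations, derive two uniform a priori estimates, pass to the limit — and then to deduce uniqueness and continuous dependence directly from the energy identity \eqref{e'} and the linearity of \eqref{sys1.15}.

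First I would fix the orthonormal basis $\{w_j\}_{j\ge1}$ of $L^2(0,L)$ made of the eigenfunctions of $-\partial_{xx}$ on $(0,L)$ with homogeneous Dirichlet conditions (which is also orthogonal in $H^1_0(0,L)$), set $V_n=\mathrm{span}\{w_1,\dots,w_n\}$, and seek $(\varphi^n,\psi^n,\theta^n,P^n)\in C^2([0,T_n];V_n)$ solving the weak form of \eqref{sys1.15} tested against $V_n$, with $V_n$-projected initial data. The one non-routine point here is the unusual structure of the second equation, which carries the inertial term $\varphi_{ttx}$ in place of a $\psi_{tt}$: once projected and integrated by parts, that equation is not a differential equation for the coefficients of $\psi^n$ but an \emph{algebraic} relation expressing them through the coefficients of $\varphi^n_{tt}$, $\theta^n$ and $P^n$; feeding this back into the first equation yields a genuine second-order ODE system for the coefficients of $(\varphi^n,\theta^n,P^n)$, to which the Carathéodory/Picard theorem applies, giving a local solution that the estimates below extend to $[0,\infty)$.

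The first a priori estimate is the energy identity itself. Repeating the computation \eqref{sys1.16.0}–\eqref{e'} at the Galerkin level — testing the four equations against $\varphi^n_t,\psi^n_t,\theta^n,P^n$, differentiating the first equation in $t$, and using it through \eqref{KhaledZennir} to absorb the cross term $\rho_2\!\int\varphi^n_{tt}\psi^n_{tx}$ — gives
\[
\mathcal{E}^n(t)+\kappa\!\int_0^t\!\!\int_0^L(\theta^n_x)^2\,dx\,ds+h\!\int_0^t\!\!\int_0^L(P^n_x)^2\,dx\,ds=\mathcal{E}^n(0)\le C,
\]
with $C$ controlled by the norm of the data in $\mathcal{H}\times L^2(0,L)\times L^2(0,L)$; hypotheses \eqref{eq9}–\eqref{con} are precisely what makes $(c-d^2/r)\|\theta\|^2+\|r^{1/2}P+dr^{-1/2}\theta\|^2$ equivalent to $\|\theta\|^2+\|P\|^2$, so this single identity bounds $\varphi^n_t,\varphi^n_{tx},\varphi^n_{tt}$ in $L^\infty(\mathbb{R}_+;L^2)$, $\varphi^n_x+\psi^n$ and $\psi^n_x$ in $L^\infty(\mathbb{R}_+;L^2)$ (hence $\varphi^n$ in $L^\infty(\mathbb{R}_+;H^1_0)$ and $\psi^n$ in $L^\infty(\mathbb{R}_+;H^1)$ by Poincaré), $\theta^n,P^n$ in $L^\infty(\mathbb{R}_+;L^2)$, and $\theta^n_x,P^n_x$ in $L^2_{\mathrm{loc}}(\mathbb{R}_+;L^2)$. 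The second estimate is obtained by differentiating \eqref{sys1.15} in $t$, testing the differentiated system against $\varphi^n_{tt},\psi^n_t,\theta^n_t,P^n_t$, once more eliminating the resulting $\varphi^n_{ttt}$-term through the differentiated first equation, and closing by Gronwall with the help of the first estimate; this delivers the remaining bounds — for $\psi^n_t$, and for $\theta^n_t,P^n_t$ — which are what will be needed to identify the limits of all the coupling terms. Since running this second estimate at $t=0$ requires more regular data than merely $\mathcal{H}\times L^2\times L^2$, I would first establish the theorem for smooth data and then recover the general case by density and the continuous-dependence bound of the last step. I expect this second estimate to be the main obstacle, precisely because the inertial term sits on $\varphi_{ttx}$ instead of on $\psi_{tt}$, so the system is neither a standard hyperbolic nor a standard parabolic one and the usual energy bookkeeping has to be routed through the first equation.

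With the two estimates in hand, Banach–Alaoglu extracts a subsequence converging weakly-$*$ to a limit $(\varphi,\varphi_t,\varphi_{tt},\psi,\psi_t,\theta,P)$ in the spaces asserted in the theorem, and the Aubin–Lions–Simon lemma upgrades this to strong convergence in $C([0,T];L^2)$ for $\varphi^n,\psi^n$ (and in $L^2(0,T;L^2)$ for the relevant derivatives), which is enough to pass to the limit in every — linear — term of the weak formulation and to verify that the initial conditions \eqref{con1.1} and boundary conditions \eqref{sys1.14} are attained; the continuity statements $\varphi_{tt},\theta,P\in C(\mathbb{R}_+;L^2)$ then follow from weak continuity (read off the equations, which put $\partial_t$ of each of these quantities in a weaker space) together with continuity of the $L^2$-norms coming from the energy identity. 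Finally, uniqueness and continuous dependence are immediate: by linearity the difference of two solutions with data in $\mathcal{H}\times L^2\times L^2$ solves the homogeneous problem, hence obeys \eqref{e'}, so $\mathcal{E}_{\mathrm{diff}}(t)\le\mathcal{E}_{\mathrm{diff}}(0)$; for equal data this forces the difference to vanish, and in general — since $\mathcal{E}$ is, under \eqref{eq9}–\eqref{con}, equivalent to the squared norm of $(\varphi,\varphi_t,\varphi_{tt},\psi,\psi_t,\theta,P)$ in $\mathcal{H}\times L^2(0,L)\times L^2(0,L)$ — the same inequality is exactly the asserted continuous dependence on the initial data, uniformly in time.
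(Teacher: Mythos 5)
Your proposal follows essentially the same route as the paper: Faedo--Galerkin approximations, a first a priori estimate from the energy identity (absorbing the cross term $\rho_2\int\varphi_{tt}\psi_{tx}$ via the differentiated first equation), a second estimate from the time-differentiated system, weak-$*$ limits with Aubin--Lions, and uniqueness/continuous dependence from the energy of the difference of two solutions. Your added care about the algebraic (rather than evolutionary) character of the projected second equation and about the extra data regularity needed to initialize the second estimate (handled by density) are refinements the paper glosses over, but they do not change the method.
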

\begin{proof}
	We will prove global existence and uniqueness of solution for problem (\ref{sys1.15})-\ref{sys1.14}) by using the
    classical Faedo-Galerkin approximations along with two a
    priori estimates. For more detail, we refer the reader to see \cite{Andrade,Jorge,Feng4}.\\
   {\bf Approximate solutions:} Let $\{u_{j}\}, \{v_{j}\}, \{\theta_{j}\}, \{P_{j}\}, 1\leq j\leq n$ be the Galerkin basis, for every $n\geq 1$, let
   \begin{eqnarray*}
    W_{n}=span\{u_{1}, u_{2},...., u_{n}\}\\
   K_{n}=span\{v_{1}, v_{2},...., v_{n}\}\\
   \Theta_{n}=span\{\theta_{1}, \theta_{2},...., \theta_{n}\}\\
   \Gamma_{n}=span\{P_{1}, P_{2},...., P_{n}\}.
\end{eqnarray*}
	Given initial data $\varphi_{0}, \psi_{0}\in H^{1}_{0}(0, L)$, $\varphi_{1}, \varphi_{2}, \psi_{1}, \theta_{0}, P_{0}\in L^{2}(0, L)$, we seek functions $g_{jn}, \zeta_{jn}, f_{jn}, k_{jn} \in C^2[0, T]$, such that the approximations
   \begin{eqnarray}
   \left\{
   \begin{array}{ll}
   \varphi_{n}(t)=\sum\limits_{j=1}^{n}g_{jn}(t)u_{j}(x)\\
   \psi_{n}(t)=\sum\limits_{j=1}^{n} \zeta_{jn} (t)v_{j}(x)\\
   \theta_{n}(t)=\sum\limits_{j=1}^{n}f_{jn}(t)\theta_{j}(x)\\
   P_{n}(t)=\sum\limits_{j=1}^{n}k_{jn}(t)P_{j}(x),
   \end{array}
   \right. \label{2.13}
   \end{eqnarray}
    hold, which solve the following approximate problem
    \begin{eqnarray}
    \left\{
    \begin{array}{ll}
    \rho_{1} (\varphi_{ntt},  u_{j})+\kappa((\varphi_{nx}+\psi_{n}),  u_{jx})=0,  \\
    \alpha (\psi _{nx}, v_{jx})+\rho_{2}(\varphi _{ntt},  v_{jx} )+\kappa((\varphi_{nx}+\psi_{n}), v_{j})\\ \ \ \ \ \ \ \ \ +\xi_{1} (\theta _{n}, v_{jx})+\xi_{2}(P _{n}, v_{jx}) =0 \\
    c(\theta_{nt}, \theta_{j})+d(P_{nt}, \theta_{j})+ \kappa (\theta_{nx}, \theta_{jx})+\xi_{1}(\psi_{nt}, \theta_{jx})=0 \\
    d(\theta_{nt}, P_{j})+r(P_{nt}, P_{j})+ h (P_{nx}, P_{jx})+\xi_{2}(\psi_{nt}, P_{jx})=0,
    \end{array}
    \right. \label{q1.1}
    \end{eqnarray}
    with initial conditions
    \begin{eqnarray}
    &&\varphi_{n}(0)=\varphi^{n}_{0}, \varphi_{nt}(0)=\varphi^{n}_{1}, \varphi_{ntt}(0)=\varphi^{n}_{2}\notag\\
    &&\psi_{n}(0)=\psi^{n}_{0}, \psi_{nt}(0)=\psi^{n}_{1},\notag\\
    &&\theta_{n}(0)=\theta^{n}_{0}, P_{n}(0)=P^{n}_{0}.\label{q1.2}
    \end{eqnarray}
 We choose $\varphi^{n}_{0}, \varphi^{n}_{1}, \varphi^{n}_{2} \in [u_1, u_2, \dots, u_n]$, $\psi^{n}_{0}, \psi^{n}_{1} \in [v_1, v_2, \dots, v_n]$, $\theta^{n}_{0} \in [\theta_1, \theta_2, \dots, \theta_n]$ and $P^{n}_{0} \in [P_1, P_2, \dots, P_n]$ such that
    \begin{eqnarray}
    &&\varphi^{n}_{0}=\sum_{j=1}^{n}(\varphi_0, u_j)u_j\to \varphi_{0} \hspace{0.2cm} in \hspace{0.2cm} H^{1}_{0}(0, L)\notag\\
    &&\varphi^{n}_{1}=\sum_{j=1}^{n}(\varphi_1, u_j)u_j\to \varphi_{1} \hspace{0.2cm} in \hspace{0.2cm} L^{2}(0, L)\notag\\
    &&\varphi^{n}_{2}=\sum_{j=1}^{n}(\varphi_2, u_j)u_j\to \varphi_{2} \hspace{0.2cm} in \hspace{0.2cm} L^{2}(0, L)\notag\\
    &&\psi^{n}_{0}=\sum_{j=1}^{n}(\psi_0, v_j)v_j\to \psi_{0} \hspace{0.2cm} in \hspace{0.2cm} H^{1}_{0}(0, L)\notag\\
    &&\psi^{n}_{1}=\sum_{j=1}^{n}(\psi_1, v_j)v_j\to \psi_{1} \hspace{0.2cm} in \hspace{0.2cm} L^{2}(0, L)\notag\\
    &&\theta^{n}_{0}=\sum_{j=1}^{n}(\theta_0, \theta_j)\theta_j\to \theta_{0} \hspace{0.2cm} in \hspace{0.2cm} L^{2}(0, L)\notag\\
    &&P^{n}_{0}=\sum_{j=1}^{n}(P_0, P_j)P_j\to P_{0} \hspace{0.2cm} in \hspace{0.2cm} L^{2}(0, L).\notag
    \end{eqnarray}
    By using the Caratheodory theorem for standard ordinary differential equations theory, the problem (\ref{q1.1})-(\ref{q1.2}) has a solution $(g_{jn}, \zeta_{jn}, f_{jn}, k_{jn})\in (H^3[0, T])^4$ and by using the embedding $H^m[0, T] \to C^{m-1}[0, T]$, we deduce that the solution $(g_{jn}, \zeta_{jn}, f_{jn}, k_{jn})\in (C^2[0, T])^4$. In turn, this gives a unique $(\varphi_n, \psi_n, \theta_n, P_n)$ defined by (\ref{2.13}) and satisfying (\ref{q1.1}).\\
    {\bf A priori estimates:} The following estimates prove that the functional energy defined in (\ref{q1.8}) related to the problem (\ref{sys1.15}) is bounded and will give the local solution being extended to $[0, T]$, for any given $T>0$.
    \begin{enumerate}
    	\item {\bf First a priori estimate:} Now multiplying, respectively, (\ref{q1.1})$_1$, (\ref{q1.1})$_2$, (\ref{q1.1})$_3$ and (\ref{q1.1})$_4$ by $g'_{jn}, \zeta'_{jn}, f'_{jn}$ and  $k'_{jn}$. By the fact that
    \begin{eqnarray*}
    \kappa\int_{0}^{L}\varphi_{tt}\psi_{tx}dx&=&\rho_{1} \int_{0}^{L}\varphi_{ttt}\varphi_{tt}dx+\kappa\int_{0}^{L}\varphi_{txx}\varphi_{tt}dx,
    \end{eqnarray*}
    we get
    \begin{eqnarray}
    &&\frac{d}{dt}\frac{1}{2}\bigg[\rho_{1} \int_{0}^{L}\varphi_{nt}^{2}dx +\frac{\rho_{1}\rho_{2}}{\kappa}\int_{0}^{L}\varphi_{ntt}^{2}dx+\rho_{2} \int_{0}^{L}\varphi_{ntx}^{2}dx+\kappa\int_{0}^{L}(\varphi_{nx}+\psi_{n})^{2}dx\notag\\
    &&
    \quad+ \alpha\int_{0}^{L}\psi_{nx}^{2}dx+c\int_{0}^{L}\theta_{n}^{2}dx+r\int_{0}^{L}P_{n}^{2}dx
    +2d\int_{0}^{L}\theta_{n}P_{n}dx\bigg]\notag\\
    && +\kappa\int_{0}^{L}\theta_{nx}^{2}dx+h\int_{0}^{L}P_{nx}^{2}dx.\label{q1.4}
    \end{eqnarray}
        Now integrating (\ref{q1.4}), we obtain
         \begin{eqnarray}
    \mathcal{E}_{n}(t)&+&\kappa\int_{0}^{t}\int_{0}^{L}\theta_{nx}^{2}(s)dxds
    +h\int_{0}^{t}\int_{0}^{L}P_{nx}^{2}(s)dxds
    =\mathcal{E}_{n}(0), \label{q1.7}
    \end{eqnarray}
    with
    \begin{eqnarray}
    \mathcal{E}_{n}(t)&=&\frac{1}{2}\bigg[\rho_{1} \int_{0}^{L}\varphi_{nt}^{2}dx +\frac{\rho_{1}\rho_{2}}{\kappa}\int_{0}^{L}\varphi_{ntt}^{2}dx+\rho_{2} \int_{0}^{L}\varphi_{ntx}^{2}dx\notag\\
    &&+\kappa\int_{0}^{L}(\varphi_{nx}+\psi_{n})^{2}dx
    + \alpha\int_{0}^{L}\psi_{nx}^{2}dx\notag\\
    &&+c\int_{0}^{L}\theta_{n}^{2}dx+r\int_{0}^{L}P_{n}^{2}dx
    +2d\int_{0}^{L}\theta_{n}P_{n}dx\bigg].
    \label{q1.8}
    \end{eqnarray}
  We get
    \begin{eqnarray}
    \mathcal{E}_{n}(t)\leq \mathcal{E}_{n}(0).\label{q1.11}
    \end{eqnarray}
    Then, in both cases, we infer that there exists a positive constant
$C$ independent on $n$ such that
    \begin{eqnarray}
    \mathcal{E}_{n}(t)\leq C, \hspace{0.3cm}t\geq 0.\label{q1.14}
    \end{eqnarray}
    It follows from (\ref{eq9}) and (\ref{q1.14}) that
    \begin{eqnarray}
    \int_{0}^{L}\varphi_{nt}^{2}dx +\int_{0}^{L}\varphi_{ntt}^{2}dx+\rho_{2} \int_{0}^{L}\varphi_{ntx}^{2}dx+\int_{0}^{L}(\varphi_{nx}
    +\psi_{n})^{2}dx\notag\\
    + \int_{0}^{L}\psi_{nx}^{2}dx+c\int_{0}^{L}\theta_{n}^{2}dx+r\int_{0}^{L}P_{n}^{2}dx
    +2d\int_{0}^{L}\theta_{n}P_{n}dx
     \leq C.\label{q1.15}
    \end{eqnarray}
    Thus we can obtain $t_{n}=T$, for all $T>0$.\\
    The insufficient regularity due to the presence of coupled system of hyperbolic/parabolic equations, we must derive second a priori estimat to prove in the next a prior estimates that, the family of approximations defined in (\ref{2.13}) is compact in the strong topology and by using compactness of the embedding (without mention) and using Aubin-Lions Lemma \cite{Lions}, our conclusion holds with an appropriate regularity.
\item {\bf The second a priori estimate:} Differentiating equation \eqref{q1.1}$_1$ and multiplying by $\varphi_{ntt}$ and then
integrating the result over (0, L), we have
\begin{eqnarray}\label{a1}
&&\frac{\rho_1}{2}\frac{d}{dt}\int^L_0\varphi_{ntt}^2dx+\kappa\int^L_0(\varphi_{nx}+\psi_{n})_t\varphi_{nttx}dx=0.
\end{eqnarray}
Differentiating \eqref{q1.1}$_2$, multiplying by
$\psi_{ntt}$, noting that
$$
\psi_{nttx}=\frac{1}{\kappa}\left(\rho_{1}\varphi_{ntttt}-\kappa\varphi_{mttxx}\right),
$$
 and then integrating the result over $(0, L)$, we get
\begin{eqnarray}\label{a2}
&&\frac{\rho_1\rho_2}{2\kappa}\frac{d}{dt}\int^L_0\varphi_{nttt}^2dx+\frac{\rho_2}{2}\frac{d}{dt}\int^L_0\varphi_{nttx}^2dx+\kappa\int^{L}_{0}
(\varphi_{nxt}+\psi_{nt})\psi_{ntt}dx\\
&&\quad+\frac{\alpha}{2}\frac{d}{dt}\int^{L}_{0}\psi_{nxt}^2dx+\xi_{1}\int^{L}_{0}
\theta_{nt}\psi_{nttx}dx+\xi_{2}\int^{L}_{0}
P_{nt}\psi_{nttx}dx =0.\nonumber
\end{eqnarray}
Differentiating $(\eqref{q1.1})_{3}, (\eqref{q1.1})_{4}$, multiplying by
$\theta_{nt}, P_{nt}$ respectively,
 and then integrating the result over $(0, L)$, we get
\begin{eqnarray}\label{a21}
&&\frac{c}{2}\frac{d}{dt}\int^L_0\theta_{nt}^2dx+\frac{r}{2}\frac{d}{dt}\int^{L}_{0}P_{nt}^2dx+d\frac{d}{dt}\int^{L}_{0}\theta_{nt}P_{nt}dx\nonumber\\
&&\quad+\xi_{1}\int^{L}_{0}
\theta_{nxt}\psi_{ntt}dx+\xi_{2}\int^{L}_{0}
P_{nxt}\psi_{ntt}dx\notag\\
&&+\kappa\int^{L}_{0}
\theta_{nxt}^{2}dx+h\int^{L}_{0}
P_{nxt}^{2}dx =0.
\end{eqnarray}
Combining
\eqref{a1}- \eqref{a2}, we get
\begin{eqnarray}\label{a3}
 \mathcal{G}_{n}(t)&+&\kappa\int_{0}^{t}\int_{0}^{L}\theta_{nxt}^{2}dx +h\int_{0}^{t}\int_{0}^{L}P_{nxt}^{2}dx \notag\\
    &=&\mathcal{G}_{n}(0),\nonumber
    \end{eqnarray}
    where
\begin{eqnarray}
    \mathcal{G}_{n}(t)&=&\frac{1}{2}\bigg[\rho_{1} \int_{0}^{L}\varphi_{ntt}^{2}dx +\frac{\rho_{1}\rho_{2}}{\kappa}\int_{0}^{L}\varphi_{nttt}^{2}dx+\rho_{2} \int_{0}^{L}\varphi_{nttx}^{2}dx\notag\\
    &&+\kappa\int_{0}^{L}(\varpi_{nxt}+\psi_{nt})^{2}dx
    + \alpha\int_{0}^{L}\psi_{nxt}^{2}dx\notag\\
    &&+c\int_{0}^{L}\theta_{tn}^{2}dx+r\int_{0}^{1}P_{tn}^{2}dx
    +2d\int_{0}^{L}\theta_{tn}P_{tn}dx\bigg].
    \end{eqnarray}
Similarly to the first a priori estimate, we can get there exists a
positive constant $C$ independent on $n$ such that
    \begin{eqnarray}
    \mathcal{G}_{n}(t)\leq C, \hspace{0.3cm}t\geq 0.\label{a31}
    \end{eqnarray}
\end{enumerate}
   {\bf Passing to the limit:}
    From (\ref{q1.15}) and \eqref{a31}, we conclude that for any $n \in \mathbb{N}$,
     \begin{eqnarray}
     \varphi_{n}&&is\hspace{0.1cm}bounded\hspace{0.1cm}
     in\hspace{0.2cm}L^{\infty}(\mathbb{R}_{+},
     H_{0}^{1})\notag\\
     \varphi_{nt}&&is\hspace{0.1cm}bounded\hspace{0.1cm} in\hspace{0.2cm}L^{\infty}(\mathbb{R}_{+}, L^{2} )
     \notag\\
     \varphi_{ntt}&&is\hspace{0.1cm}bounded\hspace{0.1cm} in\hspace{0.2cm}L^{\infty}(\mathbb{R}_{+}, L^{2} )\notag\\
     \psi_{n}&&is\hspace{0.1cm}bounded\hspace{0.1cm}
     in\hspace{0.2cm}L^{\infty}(\mathbb{R}_{+},
     H_{0}^{1})\notag\\
     \psi_{nt}&&is\hspace{0.1cm}bounded\hspace{0.1cm}
     in\hspace{0.2cm}L^{\infty}(\mathbb{R}_{+},
     L^{2})\notag\\
     \theta_{n}&&is\hspace{0.1cm}bounded\hspace{0.1cm}
     in\hspace{0.2cm}L^{\infty}(\mathbb{R}_{+},
     L^{2})\notag\\
     \theta_{nt}&&is\hspace{0.1cm}bounded\hspace{0.1cm}
     in\hspace{0.2cm}L^{\infty}(\mathbb{R}_{+},
     L^{2})\notag\\
     P_{n}&&is\hspace{0.1cm}bounded\hspace{0.1cm}
     in\hspace{0.2cm}L^{\infty}(\mathbb{R}_{+},
     L^{2})\notag\\
     P_{nt}&&is\hspace{0.1cm}bounded\hspace{0.1cm}
     in\hspace{0.2cm}L^{\infty}(\mathbb{R}_{+},
     L^{2}).
     \label{q1.16}
     \end{eqnarray}
Therefore, up to a subsequence, we observe that there exists a subsequence $(\varphi_\tau, \psi_\tau, \theta_\tau, P_\tau)$ of $(\varphi_n, \psi_n, \theta_n, P_n)$ and functions $(\varphi, \psi, \theta, P)$ that we may pass to the limit to obtain a weak solution with the above regularity by the by the fact that $L^{\infty}(\mathbb{R}_{+}, L^{2}) \to L^{2}(\mathbb{R}_{+}, L^{2})$ and $L^{\infty}(\mathbb{R}_{+}, H_{0}^{1}) \to L^{2}(\mathbb{R}_{+}, H_{0}^{1})$ as follow
      \begin{eqnarray}
     \varphi_{\tau}&& \rightharpoonup ^* \varphi \hspace{0.1cm}
     in\hspace{0.2cm}L^{2}(\mathbb{R}_{+},
     H_{0}^{1})\notag\\
     \varphi_{\tau t}&&\rightharpoonup ^* \varphi_{ t}\hspace{0.1cm} in\hspace{0.2cm}L^{2}(\mathbb{R}_{+}, L^{2} )
     \notag\\
     \varphi_{\tau tt}&&\rightharpoonup ^* \varphi_{  tt}\hspace{0.1cm} in\hspace{0.2cm}L^{2}(\mathbb{R}_{+}, L^{2} )\notag\\
     \psi_{\tau}&&\rightharpoonup ^* \psi\hspace{0.1cm}
     in\hspace{0.2cm}L^{2}(\mathbb{R}_{+},
     H_{0}^{1})\notag\\
     \psi_{\tau t}&&\rightharpoonup ^* \psi_t\hspace{0.1cm}
     in\hspace{0.2cm}L^{2}(\mathbb{R}_{+},
     L^{2})\notag\\
     \theta_{\tau}&&\rightharpoonup ^* \theta\hspace{0.1cm}
     in\hspace{0.2cm}L^{2}(\mathbb{R}_{+},
     L^{2})\notag\\
     \theta_{\tau t}&&\rightharpoonup ^* \theta_t\hspace{0.1cm}
     in\hspace{0.2cm}L^{2}(\mathbb{R}_{+},
     L^{2})\notag\\
     P_{\tau}&&\rightharpoonup ^*P\hspace{0.1cm}
     in\hspace{0.2cm}L^{2}(\mathbb{R}_{+},
     L^{2})\notag\\
     P_{\tau t}&&\rightharpoonup ^*P_t\hspace{0.1cm}
     in\hspace{0.2cm}L^{2}(\mathbb{R}_{+},
     L^{2}).\label{q1.17}
     \end{eqnarray}
     We then, by using the property of continuous of the operator in the distributions space and Lemma 1.4 in Kim \cite{Kim,Kim1}, can pass to limit the approximate problem (\ref{q1.1})-(\ref{q1.2}) and the desired results on problem (\ref{sys1.15})-(\ref{sys1.14}) is obtained.\\
    {\bf Continuous Dependence and Uniqueness:}
    Firstly we prove the continuous dependence and uniqueness for strong solutions of problem (\ref{sys1.15})-(\ref{sys1.14}).\\
    Let $(\varphi, \varphi_{t}, \varphi_{tt}, \psi, \psi_t, \theta, P)$ and $(\Gamma, \Gamma_{t}, \Gamma_{tt}, \Xi, \Xi_t, \Pi, \Omega)$ be two global solutions of (\ref{sys1.15})-(\ref{sys1.14}) with initial data $(\varphi_{0}, \varphi_{1}, \varphi_{2}, \psi_{0}, \psi_1, \theta_{0}, P_{0})$, $(\Gamma_{0}, \Gamma_{1}, \Gamma_{2}, \Xi_{0}, \Xi_{1}, \Pi_{0}, \Omega_{0})$ respectively.\\ Let
    \begin{eqnarray}
    \Lambda(t)=\varphi-\Gamma\notag\\
    \Sigma(t)=\psi-\Xi\notag\\
    \chi(t)=\theta-\Pi\notag\\
    M(t)=P-\Omega.
    \end{eqnarray}
    Then $(\Lambda, \Sigma, \chi, M)$ verifies (\ref{sys1.15})-(\ref{sys1.14}) and we have
    \begin{eqnarray}
    \left\{
    \begin{array}{ll}
     \rho_{1} \Lambda_{tt}-\kappa(\Lambda_{x}+\Sigma) _{x} \\
    -\rho_{2}\Lambda _{ttx}-\alpha \Sigma _{xx}+\kappa(\Lambda_{x}+\Sigma)
    -\xi_{1}\chi_{x}-\xi_{2} M_{x} =0 \\
    c\chi_{t}+d M_{t}-\kappa\chi_{xx}-\xi_{1}\Sigma_{tx}=0 \\
    d\chi_{t}+rM_{t}-hM_{xx}-\xi_{2}\Sigma_{tx}=0.
    \end{array}
    \right.\label{q1.19}
    \end{eqnarray}
    Multiplying $(\ref{q1.19})_{1}$ by $\Lambda_{t}$, $(\ref{q1.19})_{2}$ by $\Sigma_{t}$, $(\ref{q1.19})_{3}$ by $\chi_{t}$ and $(\ref{q1.19})_{4}$ by $M_t$. Integrating the results over $(0, L)$,
    and using the fact that
    \begin{eqnarray*}
    \kappa\int_{0}^{L}\Lambda_{tt}\Sigma_{tx}dx&=&\rho_{1}\int_{0}^{L}\Lambda_{ttt}\Lambda_{tt}dx+\kappa\int_{0}^{L}\Lambda_{txx}\Lambda_{tt}dx,
    \end{eqnarray*}
    we get
    \begin{eqnarray}
    &&\frac{d}{dt}\frac{1}{2}\bigg[\rho_{1} \int_{0}^{L}\Lambda_{t}^{2}dx +\frac{\rho_{1}\rho_{2}}{\kappa}\int_{0}^{L}\Lambda_{tt}^{2}dx+\rho_{2} \int_{0}^{L}\Lambda_{tx}^{2}dx+\kappa\int_{0}^{L}(\Lambda_{x}+\Sigma)^{2}dx\notag\\
    &&
    \quad+ \alpha\int_{0}^{L}\Sigma_{x}^{2}dx+c\int_{0}^{L}\chi^{2}dx+r\int_{0}^{L}M^{2}dx+2d\int_{0}^{1}\chi M dx\bigg]\notag\\
    &&+\kappa\int_{0}^{L}\chi_{x}^{2}dx +h\int_{0}^{L}M_{x}^{2}dx. \label{q1.20}
    \end{eqnarray}
     Then
    \begin{eqnarray*}
    \frac{d}{dt}\mathcal{E}(t)&\leq& 0,\\
    \end{eqnarray*}
    where
    \begin{eqnarray}
    \mathcal{E}(t)&=&\frac{1}{2}\bigg[\rho_{1} \int_{0}^{L}\Lambda_{t}^{2}dx +\frac{\rho_{1}\rho_{2}}{\beta}\int_{0}^{L}\Lambda_{tt}^{2}dx+\rho_{2} \int_{0}^{L}\Lambda_{tx}^{2}dx+\beta\int_{0}^{L}(\Lambda_{x}+\Sigma)^{2}dx\notag\\
    &&
    + \alpha\int_{0}^{L}\Sigma_{x}^{2}dx+c\int_{0}^{L}\chi^{2}dx+r\int_{0}^{L}M^{2}dx+2d\int_{0}^{1}\chi M dx\bigg]. \label{q1.21}
    \end{eqnarray}
   Integrating (\ref{q1.20}) to get
    \begin{eqnarray}
    \mathcal{E}(t)&\leq&\mathcal{E}(0)+ C_{1}\int^{t}_{0}\int_{0}^{L}(\vert\Lambda_{t}\vert^{2}
    +\vert\Lambda_{tt}\vert^{2}+\vert\Lambda_{tx}\vert^{2}
    +\vert\Sigma_{x}\vert^{2}+\vert(\Lambda_{x}+\Sigma)\vert^{2}\notag\\
    &&\hspace{0.2cm}+\vert\chi\vert^{2}+\vert M\vert^{2} )dx.\label{q1.22}
    \end{eqnarray}
    On the other hand, we have
    \begin{eqnarray}
    \mathcal{E}(t)&\geq&c_{0}\int_{0}^{L}(\vert\Lambda_{t}\vert^{2}
    +\vert\Lambda_{tt}\vert^{2}+\vert\Lambda_{tx}\vert^{2}
    +\vert\Sigma_{x}\vert^{2}+\vert(\Lambda_{x}+\Sigma)\vert^{2}\notag\\
    &&\hspace{0.2cm}+\vert\chi\vert^{2}+\vert M\vert^{2}  )dx.\label{q1.23}
    \end{eqnarray}
    Applying Gronwall's inequality to (\ref{q1.24}), we get
    \begin{eqnarray}
    &&\int_{0}^{L}(\vert\Lambda_{t}\vert^{2}
    +\vert\Lambda_{tt}\vert^{2}+\vert\Lambda_{tx}\vert^{2}
    +\vert\Sigma_{x}\vert^{2}+\vert(\Lambda_{x}+\Sigma)\vert^{2}\notag\\
    &&\hspace{0.2cm}
    +\vert\chi\vert^{2}+\vert M\vert^{2}  )dx\leq e^{C_{2}t}\mathcal{E}(0).\label{q1.24}
    \end{eqnarray}
    This shows that solution of problem (\ref{sys1.15})-(\ref{sys1.14}) depends continuously
    on the initial data.
    This ends the proof of Theorem \ref{the1}.
\end{proof}
    \section{Exponential stability}
    In this section, we will prove the exponential stability estimate for problem
    $(\ref{sys1.15})-(\ref{sys1.14})$ under assumption (\ref{eq9}), (\ref{con}) and by
    using a multiplier technique, with two different cases. In both cases, we find an exponential stability.\\
    The well posedness of the systems (\ref{sys1.151}) and (\ref{sys1.152}) can be obtained in a similar way from the previous section.
    \subsection{With frictional damping in the angular rotation}
    In the first problem we take frictional damping in the vertical displacement in the following system
    \begin{equation}
    \left\{
    \begin{array}{ll}
    \rho_{1} \varphi_{tt}-\kappa( \varphi_{x}+\psi) _{x}=0\\
    -\rho_{2}\varphi _{ttx}-\alpha \psi _{xx}+\kappa(\varphi_{x}+\psi)-\xi_{1}\theta_{x}-\xi_{2} P_{x}+\mu \psi_t =0\\
    c\theta_{t}+d P_{t}- \kappa \theta_{xx}-\xi_{1}\psi_{tx}=0\\
    d\theta_{t}+rP_{t}-hP_{xx}-\xi_{2}\psi_{tx}=0,
    \end{array}%
    \right. \label{sys1.151}
    \end{equation}
    where $\mu>0$.\\
    System (\ref{sys1.151}) is subjected with initial and Dirichlet boundary conditions (\ref{con1.1}),(\ref{sys1.14}).\\
    We state to use a several lemmas.
    \begin{lemma} \label{lem1}
    	The functional
    	\begin{eqnarray*}
    		F_{1}\left( t\right) &:&=-\rho _{1}\int_{0}^{L}\varphi _{t}\varphi dx, \\
    		&&
    	\end{eqnarray*}%
    	satisfies, for any $\varepsilon $ positive constant%
    	\begin{eqnarray}
    	F_{1}^{\prime }\left( t\right) &\leq &-\rho _{1}\int_{0}^{L}\varphi
    	_{t}^{2}dx+2\varepsilon C\int_{0}^{L}\varphi
    	_{x}^{2}dx+C\int_{0}^{L}(\varphi _{x}+\psi )^{2}dx,   \label{estF2.2.10.0.0}
    	\end{eqnarray}%
    	where $C$ positive constant.
    \end{lemma}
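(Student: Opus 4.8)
The functional $F_{1}$ is the classical multiplier $-\rho_{1}\varphi_{t}\varphi$, whose role is to produce a strictly negative multiple of $\int_{0}^{L}\varphi_{t}^{2}dx$ once it is later assembled into the Lyapunov functional. The plan is short: differentiate $F_{1}$ in time, substitute the first field equation of $(\ref{sys1.151})$, integrate by parts once, and close with Young's inequality.

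\textbf{Step 1 (differentiation and use of the equation).} A direct computation gives
\[
F_{1}^{\prime}(t)=-\rho_{1}\int_{0}^{L}\varphi_{tt}\varphi\,dx-\rho_{1}\int_{0}^{L}\varphi_{t}^{2}\,dx .
\]
By $(\ref{sys1.151})_{1}$ we have $\rho_{1}\varphi_{tt}=\kappa(\varphi_{x}+\psi)_{x}$, so the first integral equals $-\kappa\int_{0}^{L}(\varphi_{x}+\psi)_{x}\varphi\,dx$. Integrating by parts and using the Dirichlet conditions $(\ref{sys1.14})$, the boundary term $[(\varphi_{x}+\psi)\varphi]_{0}^{L}$ vanishes, hence
\[
F_{1}^{\prime}(t)=-\rho_{1}\int_{0}^{L}\varphi_{t}^{2}\,dx+\kappa\int_{0}^{L}(\varphi_{x}+\psi)\varphi_{x}\,dx .
\]

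\textbf{Step 2 (Young's inequality).} For the cross term, apply $2ab\leq\varepsilon a^{2}+\varepsilon^{-1}b^{2}$ with $a=\varphi_{x}$, $b=\varphi_{x}+\psi$; absorbing $\kappa$ and the fixed power of $\varepsilon^{-1}$ into a generic constant $C>0$ yields
\[
\kappa\int_{0}^{L}(\varphi_{x}+\psi)\varphi_{x}\,dx\leq 2\varepsilon C\int_{0}^{L}\varphi_{x}^{2}\,dx+C\int_{0}^{L}(\varphi_{x}+\psi)^{2}\,dx ,
\]
which, combined with Step 1, gives exactly $(\ref{estF2.2.10.0.0})$.

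There is essentially no obstacle here; this is a routine one-multiplier computation. The only points worth care are that the boundary term genuinely disappears (guaranteed by $(\ref{sys1.14})$, i.e. $\varphi(0,t)=\varphi(L,t)=0$), and that the cross term is split so that the small parameter $\varepsilon$ multiplies $\int_{0}^{L}\varphi_{x}^{2}dx$ — precisely the shape needed so that, after $F_{1}$ is combined with the remaining auxiliary functionals into the full Lyapunov functional, the $\int_{0}^{L}\varphi_{x}^{2}dx$ contribution can be rendered negligible by choosing $\varepsilon$ small. Note also that the estimate deliberately keeps $\int_{0}^{L}\varphi_{x}^{2}dx$ rather than converting it (via $\varphi_{x}=(\varphi_{x}+\psi)-\psi$ together with Poincar\'e's inequality) into $\int_{0}^{L}\psi_{x}^{2}dx$, anticipating how this bound pairs with the other lemmas used in the stability argument.
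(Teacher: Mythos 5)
Your proof is correct and follows essentially the same route as the paper's: differentiate $F_{1}$, substitute $\rho_{1}\varphi_{tt}=\kappa(\varphi_{x}+\psi)_{x}$, integrate by parts using the Dirichlet conditions to reach $F_{1}'(t)=-\rho_{1}\int_{0}^{L}\varphi_{t}^{2}dx+\kappa\int_{0}^{L}(\varphi_{x}+\psi)\varphi_{x}dx$, and close with Young's inequality. The only cosmetic difference is that you spell out the integration by parts and the $\varepsilon$-dependence hidden in the constant multiplying $\int_{0}^{L}(\varphi_{x}+\psi)^{2}dx$, which the paper leaves implicit in its generic $C$.
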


    \begin{proof}
    	Differentiating $F_{1}$, using integrating by parts and (\ref{sys1.14}), we
    	get
    	\begin{eqnarray}
    	F_{1}^{\prime }\left( t\right) &=&-\rho _{1}\int_{0}^{L}\varphi
    	_{t}^{2}dx-\rho _{1}\int_{0}^{L}\varphi _{tt}\varphi dx  \notag \\
    	&=&-\rho _{1}\int_{0}^{L}\varphi _{t}^{2}dx+\kappa \int_{0}^{L}(\varphi
    	_{x}+\psi )\varphi _{x}dx. \nonumber
    	\end{eqnarray}%
    	Using Young's inequality, we obtain (\ref{estF2.2.10.0.0})
    \end{proof}

    \begin{lemma}\label{lem2}
    	The functional
    	\begin{equation*}
    		F_{2}\left( t\right) :=\rho _{1}\int_{0}^{L}\varphi _{t}\varphi dx+\frac{\mu
    		}{2}\int_{0}^{L}\psi ^{2}dx,
    	\end{equation*}%
    	satisfies,
    	\begin{eqnarray}
    	F_{2}^{\prime }(t) &\leq &\rho _{1}\int_{0}^{L}\varphi _{t}^{2}dx-\frac{\alpha}{4}%
    	\int_{0}^{L}\psi _{x}^{2}dx-\kappa \int_{0}^{L}(\varphi _{x}+\psi )^{2}dx
    	\notag \\
    	&&+\frac{\zeta _{1}^{2}}{\alpha}\int_{0}^{L}\theta _{x}^{2}dx+\frac{\zeta _{2}^{2}%
    	}{\alpha}\int_{0}^{L}P_{x}^{2}dx +\frac{\rho_{2}^{2}}{\alpha}\int_{0}^{L}\varphi
    _{tt} ^{2}dx.  \label{Estimation F_2}
    	\end{eqnarray}
    \end{lemma}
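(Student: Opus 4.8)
The plan is to differentiate $F_2$ directly, substitute the field equations of (\ref{sys1.151}), and then absorb the indefinite cross terms using Young's inequality. First I would compute
\[
F_2'(t) = \rho_1\int_0^L \varphi_{tt}\varphi\,dx + \rho_1\int_0^L \varphi_t^2\,dx + \mu\int_0^L \psi\psi_t\,dx.
\]
For the first term I would \emph{not} use equation $(\ref{sys1.151})_1$ directly as in Lemma \ref{lem1}; instead I would observe that the natural pairing that produces the good term $-\frac{\alpha}{4}\int_0^L\psi_x^2\,dx$ comes from testing the second equation against $\psi$. Concretely, rewrite $\rho_1\int_0^L\varphi_{tt}\varphi\,dx$ and the term $\mu\int_0^L\psi\psi_t\,dx$ together so that after an integration by parts in $x$ (using the boundary conditions (\ref{sys1.14})) the combination reproduces $\int_0^L\psi\big(-\rho_2\varphi_{ttx}-\alpha\psi_{xx}+\kappa(\varphi_x+\psi)-\xi_1\theta_x-\xi_2 P_x+\mu\psi_t\big)\,dx=0$. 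The key algebraic identity here is that $\rho_1\int_0^L\varphi_{tt}\varphi\,dx = -\kappa\int_0^L(\varphi_x+\psi)\varphi_x\,dx$ from $(\ref{sys1.151})_1$, while $\rho_2\int_0^L\varphi_{ttx}\psi\,dx = -\rho_2\int_0^L\varphi_{tt}\psi_x\,dx$ after integration by parts — and this last quantity is then controlled by $\frac{\rho_2^2}{\alpha}\int_0^L\varphi_{tt}^2\,dx$ plus a $\psi_x^2$ term that is absorbed into $-\frac{\alpha}{4}\int_0^L\psi_x^2\,dx$.

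Carrying this out, the second equation multiplied by $\psi$ and integrated gives
\[
-\rho_2\int_0^L\varphi_{ttx}\psi\,dx+\alpha\int_0^L\psi_x^2\,dx+\kappa\int_0^L(\varphi_x+\psi)\psi\,dx-\xi_1\int_0^L\theta_x\psi\,dx-\xi_2\int_0^L P_x\psi\,dx+\mu\int_0^L\psi_t\psi\,dx=0,
\]
and adding the $\kappa\int_0^L(\varphi_x+\psi)\varphi_x\,dx$ contribution from the first equation assembles the exact square $\kappa\int_0^L(\varphi_x+\psi)^2\,dx$. Thus
\[
F_2'(t)=\rho_1\int_0^L\varphi_t^2\,dx-\alpha\int_0^L\psi_x^2\,dx-\kappa\int_0^L(\varphi_x+\psi)^2\,dx+\rho_2\int_0^L\varphi_{tt}\psi_x\,dx+\xi_1\int_0^L\theta_x\psi\,dx+\xi_2\int_0^L P_x\psi\,dx.
\]
(Here I am reading $\zeta_1,\zeta_2$ in the statement as $\xi_1,\xi_2$.) It remains to estimate the last three terms. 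For the $\varphi_{tt}\psi_x$ term, Young's inequality with a parameter tuned to $\alpha$ gives $\le \frac{\alpha}{4}\int_0^L\psi_x^2\,dx+\frac{\rho_2^2}{\alpha}\int_0^L\varphi_{tt}^2\,dx$. For the terms $\xi_i\int_0^L (\cdot)_x\psi\,dx$ I would integrate by parts once more to write them as $-\xi_i\int_0^L(\cdot)\psi_x\,dx$ — no, more simply apply Young directly: $\xi_1\int_0^L\theta_x\psi\,dx\le \frac{\xi_1^2}{\alpha}\int_0^L\theta_x^2\,dx+\frac{\alpha}{4}\int_0^L\psi^2\,dx$, and then bound $\int_0^L\psi^2\,dx$ by Poincaré against $\int_0^L\psi_x^2\,dx$; choosing the Young constants so that the total $\psi_x^2$ coefficient coming from the three cross terms does not exceed $\frac{3\alpha}{4}$ leaves the stated $-\frac{\alpha}{4}\int_0^L\psi_x^2\,dx$. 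This yields (\ref{Estimation F_2}).

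The main obstacle is bookkeeping the coefficients in the three applications of Young's inequality so that exactly $\frac{\alpha}{4}$ of the good $-\alpha\int_0^L\psi_x^2\,dx$ survives while the remaining $\frac{3\alpha}{4}$ soaks up the indefinite $\varphi_{tt}\psi_x$, $\theta_x\psi$, and $P_x\psi$ contributions — in particular one must be careful that the $\psi^2$ terms produced by Young on the last two are converted to $\psi_x^2$ via Poincaré (this is where $\psi\in H^1_0$ is used) with a constant that still fits under the budget. A secondary point to check is the sign and the integration-by-parts in the $\rho_2\varphi_{ttx}\psi$ term, since an error there would flip the sign of the dominant dissipative term; the boundary conditions (\ref{sys1.14}) kill all boundary contributions, so this is routine once the orientation is fixed.
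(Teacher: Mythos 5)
Your proposal is correct and follows essentially the same route as the paper: differentiate $F_2$, eliminate $\rho_1\int_0^L\varphi_{tt}\varphi\,dx$ via the first equation and $\mu\int_0^L\psi\psi_t\,dx$ via the second equation tested against $\psi$, assemble the full square $-\kappa\int_0^L(\varphi_x+\psi)^2dx$, and absorb the three cross terms by Young's inequality. The only (cosmetic) difference is that the paper keeps those cross terms in the form $\int_0^L\theta\psi_x\,dx$ and $\int_0^L P\psi_x\,dx$, so each Young step costs exactly $\frac{\alpha}{4}\int_0^L\psi_x^2dx$ and the Poincar\'e constant never enters the $\psi_x$ budget you were worried about (it is instead silently used to pass from $\int_0^L\theta^2dx$ to $\int_0^L\theta_x^2dx$, where it is harmless).
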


    \begin{proof}
    	Differentiating $F_{2}$, using integrating by parts and (\ref{sys1.14}), we
    	get
    	\begin{eqnarray}
    	F_{2}^{\prime }\left( t\right) &=& \rho _{1}\int_{0}^{L}\varphi
    	_{t}^{2}dx+\rho _{1}\int_{0}^{L}\varphi
    	_{tt}\varphi dx+\mu \int_{0}^{L}\psi \psi _tdx  \nonumber\\
    	&&=\rho _{1}\int_{0}^{L}\varphi
    	_{t}^{2}dx-\kappa \int_{0}^{L}(\varphi _{x}+\psi )\varphi _{x}dx -\rho_{2} \int_{0}^{L}\varphi
    	_{tt} \psi_{x}dx  \notag \\
    	&&-\alpha\int_{0}^{L}\psi _{x}^{2}dx-\kappa \int_{0}^{L}(\varphi _{x}+\psi )\psi
    	dx  \notag \\
    	&&-\zeta _{1}\int_{0}^{L}\theta \psi _{x}dx-\zeta _{2}\int_{0}^{L}P\psi
    	_{x}dx.  \label{Derivee F_2}
    	\end{eqnarray}%
    	Thanks to Young's inequality, we have
     \begin{equation}
    	-\rho_{2} \int_{0}^{L}\varphi
    	_{tt} \psi_{x}dx\leq \frac{\alpha}{4}\int_{0}^{L}\psi
    	_{x}^{2}dx+\frac{\rho_{2}^{2}}{\alpha}\int_{0}^{L}\varphi
    	_{tt} ^{2}dx,
    	\label{suite derivee F_21}
    	\end{equation}
    	and
    	\begin{equation}
    	\zeta _{1}\int_{0}^{L}\theta \psi_x dx\leq \frac{\alpha}{4}\int_{0}^{L}\psi
    	_{x}^{2}dx+\frac{\zeta _{1}^{2}}{\alpha}\int_{0}^{L}\theta ^{2}dx,
    	\label{suite derivee F_2}
    	\end{equation}%
    	and%
    	\begin{equation}
    		\zeta _{2}\int_{0}^{L}P\psi _{x}dx\leq \frac{\alpha}{4}\int_{0}^{L}\psi
    		_{x}^{2}dx+\frac{\zeta _{2}^{2}}{\alpha}\int_{0}^{L}P^{2}dx.\label{suite derivee F_3}
    	\end{equation}%
    	By (\ref{suite derivee F_21}), (\ref{suite derivee F_2}), and (\ref%
    	{suite derivee F_21}) together with (\ref{Derivee F_2}), we arrive to the proof of Lemma \ref{lem2}.
    \end{proof}

    \begin{lemma} \label{lem3}
    	The functional
    	\begin{equation*}
    		F_{3}\left( t\right) :=-\rho _{2}\int_{0}^{L}\varphi _{tx}\varphi _{x}dx+%
    		\frac{\mu }{2}\int_{0}^{L}\psi ^{2}dx,
    	\end{equation*}%
    	satisfies,
    	\begin{eqnarray}
    	F_{3}^{\prime }(t) &\leq &-\rho _{2}\int_{0}^{L}\varphi _{tx}^{2}dx-\frac{%
    		\rho _{1}\rho _{2}}{\kappa }\int_{0}^{L}\varphi _{tt}^{2}dx- \alpha\Big(1-\frac{3c_*^2}{4}\Big)
    	\int_{0}^{L}\psi _{x}^{2}dx  \notag \\
    	&&+\frac{\zeta _{1}^{2}}{\alpha}\int_{0}^{L}\theta
    	_{x}^{2}dx+\frac{\zeta _{2}^{2}}{\alpha}\int_{0}^{L}P_{x}^{2}dx +\frac{\kappa^{2}}{\alpha}\int_{0}^{L}(\varphi_x+\psi) ^{2}dx,
    	\label{Estimation F_3}
    	\end{eqnarray}%
    	where $c$ positive constant
    \end{lemma}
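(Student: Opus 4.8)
The plan is to differentiate $F_3$ in time, substitute the PDEs to eliminate the highest-order terms, and absorb the remaining cross terms by Young's inequality. Concretely, I would write
\[
F_3'(t) = -\rho_2\int_0^L \varphi_{ttx}\varphi_x\,dx - \rho_2\int_0^L \varphi_{tx}\varphi_{tx}\,dx + \mu\int_0^L \psi\psi_t\,dx,
\]
and then handle the first integral by integrating by parts in $x$ so that $\varphi_{ttx}$ pairs against $\varphi_{xx}$, or alternatively by invoking $(\ref{sys1.151})_2$ to replace $\rho_2\varphi_{ttx}$ with $-\alpha\psi_{xx}+\kappa(\varphi_x+\psi)-\xi_1\theta_x-\xi_2 P_x+\mu\psi_t$. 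The second route is the cleaner one: after multiplying $(\ref{sys1.151})_2$ by $\varphi_x$ and integrating, the term $-\rho_2\int_0^L\varphi_{ttx}\varphi_x\,dx$ becomes a combination of $\alpha\int_0^L\psi_x\varphi_{xx}\,dx$ (from integration by parts on $-\alpha\psi_{xx}\varphi_x$, but note $\varphi_{xx}$ should be re-expressed via $(\ref{sys1.151})_1$ as $\rho_1\varphi_{tt}/\kappa - \psi_x$), $\kappa\int_0^L(\varphi_x+\psi)\varphi_x\,dx$, $-\xi_1\int_0^L\theta_x\varphi_x\,dx$, $-\xi_2\int_0^L P_x\varphi_x\,dx$, and $\mu\int_0^L\psi_t\varphi_x\,dx$.

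Next I would use $(\ref{sys1.151})_1$, namely $\varphi_{xx} = \rho_1\varphi_{tt}/\kappa - \psi_x$ (valid since $\kappa(\varphi_x+\psi)_x = \rho_1\varphi_{tt}$), to convert $\alpha\int_0^L\psi_x\varphi_{xx}\,dx$ into $\frac{\alpha\rho_1}{\kappa}\int_0^L\psi_x\varphi_{tt}\,dx - \alpha\int_0^L\psi_x^2\,dx$. The term $\frac{\alpha\rho_1}{\kappa}\int_0^L\psi_x\varphi_{tt}\,dx$ combines with what should ultimately produce the $-\frac{\rho_1\rho_2}{\kappa}\int_0^L\varphi_{tt}^2\,dx$ contribution, using that $\mu\int_0^L\psi\psi_t\,dx$ together with the structure of the first equation yields the $\varphi_{tt}^2$ term; I would track the coefficients carefully here. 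The $-\alpha\int_0^L\psi_x^2\,dx$ term is the good damping term, and the factor $1-\frac{3c_*^2}{4}$ in the statement comes from giving back three quarters of $\alpha\int_0^L\psi_x^2\,dx$ in three applications of Young's inequality (Poincaré-type, with constant $c_*$) against $\int_0^L\theta_x^2\,dx$, $\int_0^L P_x^2\,dx$, and $\int_0^L(\varphi_x+\psi)^2\,dx$ respectively, producing the coefficients $\frac{\zeta_1^2}{\alpha}$, $\frac{\zeta_2^2}{\alpha}$, $\frac{\kappa^2}{\alpha}$. The remaining $-\rho_2\int_0^L\varphi_{tx}^2\,dx$ survives untouched from the second term of $F_3'$.

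The main obstacle I anticipate is bookkeeping: keeping the coefficient of $\varphi_{tt}^2$ exactly $-\frac{\rho_1\rho_2}{\kappa}$ and the coefficient of $\psi_x^2$ exactly $-\alpha(1-\frac{3c_*^2}{4})$ requires that the cross term $\frac{\alpha\rho_1}{\kappa}\int_0^L\psi_x\varphi_{tt}\,dx$ be combined with $\mu\int_0^L\psi\psi_t\,dx$ in a way that is consistent with the first equation — this is where the choice of the $\frac{\mu}{2}\int_0^L\psi^2\,dx$ correction term in the definition of $F_3$ is essential, since $\frac{d}{dt}\frac{\mu}{2}\int_0^L\psi^2 = \mu\int_0^L\psi\psi_t$ and $\psi_t$ must be related back through an integration by parts and the first PDE. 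A careful accounting of which $\varphi_{tt}$ terms cancel against which should close the estimate; I would double-check by tracking each term through a single clean derivation rather than patching afterward. Everything else (the three Young's inequalities, the Poincaré inequalities absorbed into $C$'s) is routine.
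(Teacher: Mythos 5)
Your overall strategy (differentiate $F_3$, substitute the PDEs, absorb cross terms by Young) is the right one, but you have paired the second equation with the wrong test function, and this breaks the argument at its central point. The paper leaves $-\rho_2\int_0^L\varphi_{ttx}\varphi_x\,dx$ alone (one integration by parts turns it into $\rho_2\int_0^L\varphi_{tt}\varphi_{xx}\,dx$) and instead uses $(\ref{sys1.151})_2$ on the \emph{other} term, $\mu\int_0^L\psi\psi_t\,dx$: writing $\mu\psi_t=\rho_2\varphi_{ttx}+\alpha\psi_{xx}-\kappa(\varphi_x+\psi)+\xi_1\theta_x+\xi_2P_x$ and testing against $\psi$ produces the damping term $\alpha\int_0^L\psi_{xx}\psi\,dx=-\alpha\int_0^L\psi_x^2\,dx$ together with cross terms against $\psi$; this is precisely why the correction $\frac{\mu}{2}\int_0^L\psi^2dx$ is built into $F_3$, and why the three Young/Poincar\'e give-backs each cost $\frac{\alpha c_*^2}{4}\int_0^L\psi_x^2dx$, yielding the factor $1-\frac{3c_*^2}{4}$ and the coefficients $\zeta_i^2/\alpha$, $\kappa^2/\alpha$. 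Your route --- substituting the second equation into $-\rho_2\int_0^L\varphi_{ttx}\varphi_x\,dx$, i.e.\ testing it against $\varphi_x$ --- actually gives $+\alpha\int_0^L\psi_{xx}\varphi_x\,dx=-\alpha\int_0^L\psi_x\varphi_{xx}\,dx$ (you record the opposite sign), and after inserting $\varphi_{xx}=\frac{\rho_1}{\kappa}\varphi_{tt}-\psi_x$ this equals $-\frac{\alpha\rho_1}{\kappa}\int_0^L\psi_x\varphi_{tt}\,dx+\alpha\int_0^L\psi_x^2\,dx$: the $\psi_x^2$ contribution comes out with the \emph{wrong} sign, so no damping in $\psi_x$ is produced. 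Moreover your remaining cross terms sit against $\varphi_x$ rather than $\psi$, so they cannot be traded for $\psi_x^2$ via Poincar\'e with constant $c_*$ as the statement requires, and the term $\mu\int_0^L\psi\psi_t\,dx$ --- which in the paper's proof is entirely consumed by the PDE substitution --- is left in your argument with only the remark that it must somehow be related back through the first equation, which is not an estimate.

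To be fair, the spot you flag as delicate (the coefficient of $\int_0^L\varphi_{tt}^2dx$) is genuinely problematic even in the paper: its own computation arrives at $+\frac{\rho_1\rho_2}{\kappa}\int_0^L\varphi_{tt}^2dx-2\rho_2\int_0^L\varphi_{tt}\psi_x\,dx$ and then drops the cross term and flips the sign to match the statement. But your proposal does not repair this; it replaces a concrete (if flawed) computation with a deferral at exactly the same place. As written, the proposal would not close.
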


    \begin{proof}
    	Differentiating $F_{3}$, using integrating by parts and (\ref{sys1.14}), we get
    	\begin{eqnarray}
    	F_{3}^{\prime }\left( t\right) &=&-\rho _{2}\int_{0}^{L}\varphi _{ttx}\varphi
    	_{x}dx-\rho _{2}\int_{0}^{L}\varphi _{tx}^{2}dx +\mu \int_{0}^{L} \psi_t\psi dx  \notag \\
    	&=& \rho _{2}\int_{0}^{L}\varphi _{tt}\varphi
    	_{xx}dx-\rho _{2}\int_{0}^{L}\varphi _{tx}^{2}dx \notag \\
   	&+&   \int_{0}^{L}(\rho_2 \varphi_{ttx}+b\psi_{xx}-\kappa(\varphi_x+\psi)+\xi_1\theta_x+\xi_2P_x)\psi dx.    	\label{Djamel}
    	\end{eqnarray}%
    	 From (\ref{sys1.15})$_1$, we have $\psi _{x}=\frac{\rho _{1}}{\kappa }\varphi _{tt}-\varphi _{xx},$
    	then
    	\begin{eqnarray}
    -\rho _{2}\int_0^L\varphi_{tt}\varphi_{xx}dx+\rho_2\int_0^L\varphi_{tt}\psi_{xt}dx=\frac{\rho_1\rho_2}{\kappa }\int_0^L\varphi_{tt}^2dx. \nonumber
    	\end{eqnarray}
    	Then
    	\begin{eqnarray}
    	F_{3}^{\prime }\left( t\right)
    	&=&  -\rho _{2}\int_{0}^{L}\varphi _{tx}^{2}dx+\frac{\rho _{1}\rho _{2}}{%
    		\kappa }\int_{0}^{L}\varphi _{tt}^{2}dx -  \alpha \int_{0}^{L} \psi^2_{x}dx-2\rho_{2}\int_{0}^{L}\varphi _{tt} \psi _xdx\notag \\
    	&-&\kappa\int_{0}^{L}(\varphi_x+\psi) \psi dx+\int_{0}^{L}( \xi_1\theta_x+\xi_2P_x)\psi dx.
    	\label{Djamel1}
    	\end{eqnarray}%
    	  	Thanks to Young and Poincr\'{e}'s inequalities, we have
    	 \begin{equation}
    	 	-\kappa \int_{0}^{L}(\varphi_x+\psi) \psi dx\leq \frac{\alpha c_*^2}{4}\int_{0}^{L}\psi
    	 	_{x}^{2}dx+\frac{\kappa^{2}}{\alpha}\int_{0}^{L}(\varphi_x+\psi) ^{2}dx,
    	 	\label{s22}
    	 	\end{equation}
    	 and
    	 \begin{equation}
    	 \zeta _{1}\int_{0}^{L}\theta_x \psi dx\leq \frac{\alpha c_*^2}{4}\int_{0}^{L}\psi
    	 _{x}^{2}dx+\frac{\zeta _{1}^{2}}{\alpha}\int_{0}^{L}\theta_x ^{2}dx,
    	 \label{s23}
    	 \end{equation}%
    	 and%
    	 \begin{equation}
    	 \zeta _{2}\int_{0}^{L}P _{x}\psi dx\leq \frac{\alpha c_*^2}{4}\int_{0}^{L}\psi
    	 _{x}^{2}dx+\frac{\zeta _{2}^{2}}{\alpha}\int_{0}^{L}P_x^{2}dx.\label{s24}
    	 \end{equation}
    	 By (\ref{s22}), (\ref{s23}) and (\ref%
    	 {s24}) together with (\ref{Djamel1}), we arrive to the proof of Lemma \ref{lem3}.
    \end{proof}

    \begin{lemma} \label{lem4}
    	The functional
    	\begin{equation}
    	F_{4}\left( t\right) :=-\rho _{2}\int_{0}^{L}\varphi _{xt}\left( \varphi
    	_{x}+\psi \right) dx-\frac{\alpha\rho _{1}}{\kappa }\int_{0}^{L}\varphi _{xt}\psi
    	dx,
    	\end{equation}%
    	satisfies
    	\begin{eqnarray}
    	F_{4}^{\prime }\left( t\right) &\leq& -\frac{\rho _{2}}{2}\int_{0}^{L}\varphi
    	_{xt}^{2}dx-\frac{\kappa }{2}\int_{0}^{L}\left( \varphi _{x}+\psi \right)
    	^{2}dx+c\int_{0}^{L}\psi _{t}^{2}dx\nonumber\\
    	&+&\frac{\zeta _{1}^{2}}{4}%
    	\int_{0}^{L}\theta _{x}^{2}dx+\frac{\zeta _{2}^{2}}{4}\int_{0}^{L}P_{x}^{2}dx,
    	\label{Estimation number 4}
    	\end{eqnarray}%
    	where $c$ a positive constant.
    \end{lemma}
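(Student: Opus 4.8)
The plan is to differentiate $F_{4}$ in time, eliminate the second-order time derivatives by means of the field equations $(\ref{sys1.151})_{1}$ and $(\ref{sys1.151})_{2}$, integrate by parts in $x$, and finish with Young's inequality. Differentiating and expanding by the product rule gives
\begin{eqnarray*}
F_{4}^{\prime}(t) &=& -\rho_{2}\int_{0}^{L}\varphi_{xtt}(\varphi_{x}+\psi)\,dx-\rho_{2}\int_{0}^{L}\varphi_{xt}^{2}\,dx-\rho_{2}\int_{0}^{L}\varphi_{xt}\psi_{t}\,dx\\
&&-\frac{\alpha\rho_{1}}{\kappa}\int_{0}^{L}\varphi_{xtt}\psi\,dx-\frac{\alpha\rho_{1}}{\kappa}\int_{0}^{L}\varphi_{xt}\psi_{t}\,dx.
\end{eqnarray*}
In the first integral I replace $\rho_{2}\varphi_{xtt}$ by $-\alpha\psi_{xx}+\kappa(\varphi_{x}+\psi)-\xi_{1}\theta_{x}-\xi_{2}P_{x}+\mu\psi_{t}$, which is exactly $(\ref{sys1.151})_{2}$; this produces the decisive term $-\kappa\int_{0}^{L}(\varphi_{x}+\psi)^{2}\,dx$ together with $\alpha\int_{0}^{L}\psi_{xx}(\varphi_{x}+\psi)\,dx$ and the cross terms in $\theta_{x}$, $P_{x}$ and $\psi_{t}$. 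In the fourth integral I use $(\ref{sys1.151})_{1}$ differentiated once in $x$, namely $\tfrac{\rho_{1}}{\kappa}\varphi_{xtt}=(\varphi_{x}+\psi)_{xx}$, and integrate by parts to obtain $\alpha\int_{0}^{L}(\varphi_{x}+\psi)_{x}\psi_{x}\,dx$.

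The key point is that these two high-order contributions cancel: integrating $\alpha\int_{0}^{L}\psi_{xx}(\varphi_{x}+\psi)\,dx$ by parts and adding the result to $\alpha\int_{0}^{L}(\varphi_{x}+\psi)_{x}\psi_{x}\,dx$, all the $\psi_{x}^{2}$ and $\varphi_{xx}\psi_{x}$ terms drop out and only boundary contributions remain, which vanish by the Dirichlet condition (\ref{sys1.14}); this explains why no $\int_{0}^{L}\psi_{x}^{2}\,dx$ term survives on the right-hand side of (\ref{Estimation number 4}). After this cancellation one is left with
\begin{eqnarray*}
F_{4}^{\prime}(t)&=&-\rho_{2}\int_{0}^{L}\varphi_{xt}^{2}\,dx-\kappa\int_{0}^{L}(\varphi_{x}+\psi)^{2}\,dx+\xi_{1}\int_{0}^{L}\theta_{x}(\varphi_{x}+\psi)\,dx\\
&&+\xi_{2}\int_{0}^{L}P_{x}(\varphi_{x}+\psi)\,dx-\mu\int_{0}^{L}\psi_{t}(\varphi_{x}+\psi)\,dx\\
&&-\rho_{2}\int_{0}^{L}\varphi_{xt}\psi_{t}\,dx-\frac{\alpha\rho_{1}}{\kappa}\int_{0}^{L}\varphi_{xt}\psi_{t}\,dx.
\end{eqnarray*}

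Finally I apply Young's inequality to the five cross terms, choosing the parameters so that the $\varphi_{xt}^{2}$ contributions from the last two terms are absorbed into $\tfrac{\rho_{2}}{2}\int_{0}^{L}\varphi_{xt}^{2}\,dx$ and the $(\varphi_{x}+\psi)^{2}$ contributions from the three middle terms are absorbed into $\tfrac{\kappa}{2}\int_{0}^{L}(\varphi_{x}+\psi)^{2}\,dx$; the leftover $\psi_{t}^{2}$ terms are lumped into $c\int_{0}^{L}\psi_{t}^{2}\,dx$ and the remaining $\theta_{x}^{2}$, $P_{x}^{2}$ terms into $\tfrac{\xi_{1}^{2}}{4}\int_{0}^{L}\theta_{x}^{2}\,dx$ and $\tfrac{\xi_{2}^{2}}{4}\int_{0}^{L}P_{x}^{2}\,dx$, which gives (\ref{Estimation number 4}). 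I expect the main obstacle to be the bookkeeping in the middle step: one must check that, after substituting from the two PDEs and integrating by parts, every second-order term ($\psi_{xx}$, $\varphi_{xtt}$, $\varphi_{xxx}$) and in particular every $\psi_{x}^{2}$ term cancels exactly, and that all boundary terms generated along the way vanish under (\ref{sys1.14}); everything else is routine application of Young's and Poincaré's inequalities.
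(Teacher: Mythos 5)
Your proposal is correct and follows essentially the same route as the paper: differentiate $F_{4}$, use $(\ref{sys1.151})_{2}$ tested against $(\varphi_{x}+\psi)$ to produce the decisive $-\kappa\int_{0}^{L}(\varphi_{x}+\psi)^{2}\,dx$, use $(\ref{sys1.151})_{1}$ to convert the remaining $\alpha\int_{0}^{L}\psi_{x}(\varphi_{x}+\psi)_{x}\,dx$ into the term that cancels against $-\frac{\alpha\rho_{1}}{\kappa}\int_{0}^{L}\varphi_{xtt}\psi\,dx$ (which is exactly why $F_4$ carries that second piece and why no $\int_{0}^{L}\psi_{x}^{2}\,dx$ survives), and finish with Young's inequality on the five cross terms. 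The only difference is organizational (you substitute the PDEs pointwise and use the $x$-differentiated form of $(\ref{sys1.151})_{1}$, while the paper uses the multiplier form and the undifferentiated identity $(\varphi_{x}+\psi)_{x}=\frac{\rho_{1}}{\kappa}\varphi_{tt}$), and your bookkeeping of the Young parameters is in fact more careful than the paper's.
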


    \begin{proof}
    	Direct computation using integration by parts, we get
    	\begin{eqnarray}
    	F_{4}^{\prime }\left( t\right) &=&-\rho _{2}\int_{0}^{L}\varphi _{ttx}\left(
    	\varphi _{x}+\psi \right) dx-\rho _{2}\int_{0}^{L}\varphi _{xt}\left(
    	\varphi _{x}+\psi \right) _{t}dx-\frac{\alpha\rho _{1}}{\kappa }%
    	\int_{0}^{L}\varphi _{xt}\psi_t  dx  \notag\\
    	&&  -\frac{b\rho _{1}}{\kappa }%
    	\int_{0}^{L}\varphi _{xtt}\psi dx.  \label{Estimation F_4}
    	\end{eqnarray}%
    	Multiplying (\ref{sys1.11})$_2$ by $\left( \varphi _{x}+\psi \right),$ we get
    	\begin{eqnarray}
    	&&-\rho _{2}\int_{0}^{L}\varphi _{ttx}\left( \varphi _{x}+\psi \right)
    	dx+\alpha\int_{0}^{L}\psi _{x}\left( \varphi _{x}+\psi \right) _{x}dx +\kappa
    	\int_{0}^{L}\left( \varphi _{x}+\psi \right)^2dx \nonumber\\
    	&&=\zeta _{1}\int_{0}^{L}\theta _{x}\left( \varphi _{x}+\psi \right)
    	dx+\zeta _{2}\int_{0}^{L}P_{x}\left( \varphi _{x}+\psi \right) dx\nonumber\\
    	&&-\mu\int_{0}^{L}\psi_t\left( \varphi _{x}+\psi \right) dx.
    	\label{Khaled Zennir}
    	\end{eqnarray}
    	By Young's inequality, we get
    	\begin{equation}
    -\mu\int_{0}^{L}\psi_t\left( \varphi _{x}+\psi \right) dx\leq
    	\frac{\kappa}{4}\int_{0}^{L}\left( \varphi _{x}+\psi \right)^{2}dx+\frac{\mu^2}{4}%
    	\int_{0}^{L}\psi_t ^{2}dx,  \label{Choucha0}
    	\end{equation}
    	and
    	\begin{equation}
    	\zeta _{1}\int_{0}^{L}\theta _{x}\left( \varphi _{x}+\psi \right) dx\leq
    	\frac{\zeta _{1}^{2}}{4}\int_{0}^{L}\theta _{x}^{2}dx+\frac{\kappa }{4}%
    	\int_{0}^{L}\left( \varphi _{x}+\psi \right) ^{2}dx,  \label{Choucha1}
    	\end{equation}
    	and%
    	\begin{equation}
    	-\zeta _{2}\int_{0}^{L}P_{x}\left( \varphi _{x}+\psi \right) dx\leq \frac{%
    		\zeta _{2}^{2}}{4}\int_{0}^{L}P_{x}^{2}dx+\frac{\kappa }{4}%
    	\int_{0}^{L}\left( \varphi _{x}+\psi \right) ^{2}dx.  \label{Choucha2}
    	\end{equation}%
    	 Then, by (\ref{Choucha0}), (\ref{Choucha1}) and (\ref{Choucha2}) with (\ref{Khaled Zennir}) we have
    	\begin{eqnarray}
    	&&-\rho _{2}\int_{0}^{L}\varphi _{ttx}\left( \varphi _{x}+\psi \right)
    	dx+\alpha\int_{0}^{L}\psi _{x}\left( \varphi _{x}+\psi \right) _{x}dx
    	\label{Khaled Zennir1}\\
    	&&\leq\frac{-3\kappa }{4}%
    	\int_{0}^{L}\left( \varphi _{x}+\psi \right) ^{2}dx	+\frac{\zeta _{1}^{2}}{4}\int_{0}^{L}\theta _{x}^{2}dx+\frac{%
    		\zeta _{2}^{2}}{4}\int_{0}^{L}P_{x}^{2}dx+\frac{\mu^2}{4}%
    	\int_{0}^{L}\psi_t ^{2}dx.\nonumber
    	\end{eqnarray}
    From (\ref{sys1.15})$_1$, we have $\left( \varphi _{x}+\psi \right) _{x}=\dfrac{\rho _{1}}{\kappa }\varphi _{tt}$,
    then
     	\begin{eqnarray}
     &&-\rho _{2}\int_{0}^{L}\varphi _{ttx}\left( \varphi _{x}+\psi \right)
     dx+\dfrac{\alpha\rho _{1}}{\kappa }\int_{0}^{L}\psi _{x}\varphi _{tt}dx
\label{A1}\\
     &&\leq\frac{-3\kappa }{4}%
     \int_{0}^{L}\left( \varphi _{x}+\psi \right) ^{2}dx	+\frac{\zeta _{1}^{2}}{4}\int_{0}^{L}\theta _{x}^{2}dx+\frac{%
     	\zeta _{2}^{2}}{4}\int_{0}^{L}P_{x}^{2}dx+\frac{\mu^2}{4}%
     \int_{0}^{L}\psi_t ^{2}dx.     \nonumber
     \end{eqnarray}
    	Using Young's inequality, we have
    	\begin{eqnarray}
    	&&	-\rho _{2}\int_{0}^{L}\varphi _{xt}\left( \varphi _{x}+\psi \right) _{t}dx-%
    		\frac{\alpha\rho _{1}}{\kappa }\int_{0}^{L}\varphi _{xt}\psi _{t}dx\nonumber\\ &&=-\rho
    		_{2}\int_{0}^{L}\varphi _{xt}^{2}dx-\rho _{2}\int_{0}^{L}\varphi _{xt}\psi
    		_{t}dx-\frac{\alpha\rho _{1}}{\kappa }\int_{0}^{L}\varphi _{xt}\psi _{t}dx \nonumber\\
    	&	&\leq -\rho _{2}\int_{0}^{L}\varphi _{xt}^{2}dx+\left( \rho _{2}+\frac{%
    			\alpha^{2}\rho _{1}^{2}}{\rho _{2}\kappa }\right) \int_{0}^{L}\psi _{t}^{2}dx.\label{A2}
    	\end{eqnarray}%
    	Therefore, by (\ref{Khaled Zennir1}), (\ref{A1}) and (\ref{A2}), the desired result is obtained.
    \end{proof}

    \begin{lemma}
    	There exists a constant $\beta _{0}>0$ such that%
    	\begin{equation*}
    		\left( N-\beta _{0}\right) \mathcal{E}\left( t\right) \leq \mathcal{L}\left(
    		t\right) \leq \left( N+\beta _{0}\right) \mathcal{E}\left( t\right) \text{ \
    			\ }\forall t\geq 0,
    	\end{equation*}%
    	where $\mathcal{L}\left( t\right) $ is a Lyapunov functional defined by
    \end{lemma}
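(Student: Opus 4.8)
The plan is to use the structure of the Lyapunov functional: $\mathcal{L}(t)$ is built as
\[
\mathcal{L}(t) \;=\; N\,\mathcal{E}(t) \;+\; \sum_{i=1}^{4} N_i\, F_i(t),
\]
a linear combination of the energy $\mathcal{E}$ of (\ref{energy}) and the functionals $F_1,\dots,F_4$ of Lemmas \ref{lem1}--\ref{lem4}, with positive weights $N,N_1,\dots,N_4$ to be pinned down later in the stability argument. Thus the assertion reduces to establishing
\[
\big| \mathcal{L}(t) - N\,\mathcal{E}(t) \big| \;\le\; \beta_0\, \mathcal{E}(t), \qquad t\ge 0,
\]
with $\beta_0 := \sum_{i=1}^{4} N_i c_i$ for constants $c_i$ that \emph{do not depend on} $N$; the two-sided bound $(N-\beta_0)\mathcal{E}(t)\le\mathcal{L}(t)\le(N+\beta_0)\mathcal{E}(t)$ is then immediate.

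First I would record the coercivity of $\mathcal{E}$ in the natural energy norm. From (\ref{energy}), the terms $\kappa\int_0^L(\varphi_x+\psi)^2dx$ and $\alpha\int_0^L\psi_x^2dx$ together with Poincar\'e's inequality control $\int_0^L\psi^2dx$, then $\int_0^L\varphi_x^2dx$ (writing $\varphi_x=(\varphi_x+\psi)-\psi$), and hence $\int_0^L\varphi^2dx$; while, by (\ref{eq9})--(\ref{con}), the thermodiffusion block $\big(c-\tfrac{d^2}{r}\big)\int_0^L\theta^2dx+\int_0^L|r^{1/2}P+dr^{-1/2}\theta|^2dx$ dominates a positive multiple of $\int_0^L(\theta^2+P^2)dx$. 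Hence there is $\lambda_0>0$ with
\[
\lambda_0\int_0^L\big(\varphi_t^2+\varphi_{tt}^2+\varphi_{tx}^2+\varphi_x^2+\varphi^2+\psi_x^2+\psi^2+\theta^2+P^2\big)dx \;\le\; \mathcal{E}(t).
\]

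Then I would estimate each $|F_i(t)|$ by Cauchy--Schwarz and Young's inequalities, followed by Poincar\'e where needed: e.g. $|F_1(t)|\le\tfrac{\rho_1}{2}\int_0^L\varphi_t^2dx+\tfrac{\rho_1}{2}\int_0^L\varphi^2dx$, and likewise for $F_2$ (using $\int_0^L\psi^2dx$), $F_3$ (using $\int_0^L\varphi_{tx}^2dx$, $\int_0^L\varphi_x^2dx$, $\int_0^L\psi^2dx$) and $F_4$ (using $\int_0^L\varphi_{xt}^2dx$, $\int_0^L(\varphi_x+\psi)^2dx$, $\int_0^L\psi^2dx$) --- every term appearing on the right being one of the quantities controlled by the coercivity bound above. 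Summing with the weights $N_i$ and invoking that bound gives $\big|\sum_i N_i F_i(t)\big|\le\beta_0\mathcal{E}(t)$, which is exactly the claim. The argument is routine; the only things to watch are that hypotheses (\ref{eq9})--(\ref{con}) are genuinely used for the coercivity of the $(\theta,P)$-part of $\mathcal{E}$, and that $\beta_0$ is kept independent of $N$ so that afterwards $N$ can be taken large enough to ensure $N-\beta_0>0$ (and the later sign conditions in the stability proof).
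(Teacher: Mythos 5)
Your proposal is correct and follows essentially the same route as the paper: bound each $|F_i(t)|$ via Cauchy--Schwarz, Young and Poincar\'e by the quadratic quantities controlled by $\mathcal{E}(t)$, collect the coefficients into a constant $\beta_0$ independent of $N$, and conclude $|\mathcal{L}(t)-N\mathcal{E}(t)|\le\beta_0\mathcal{E}(t)$. If anything, you are more careful than the paper, which asserts the final bound without explicitly verifying the coercivity of $\mathcal{E}$ over $\int_0^L\varphi^2dx$, $\int_0^L\psi^2dx$ and the $(\theta,P)$-block via (\ref{eq9})--(\ref{con}); note only that in the paper's definition (\ref{Functional L(t)}) the weights are $N_2=N_3=N_4=1$, which is a harmless special case of your formulation.
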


    \begin{equation}
    \mathcal{L}\left( t\right) :=N\mathcal{E}\left( t\right)
    +N_{1}F_{1}(t)+F_{2}(t)+F_{3}(t)+F_{4}(t),
    \label{Functional L(t)}
    \end{equation}%
    and $N, N_{1}>\beta _{0}$ is a sufficiently large
    constant.
    \begin{proof}
    	It follows from Young, Poincar\'{e} and Cauchy-Schwarz's inequalities that%
    	\begin{eqnarray}
    		\left\vert F_{1}\left( t\right) \right\vert  &\leq &\frac{\rho _{1}}{2}%
    		\int_{0}^{L}\varphi _{t}^{2}dx+\frac{\rho _{1}}{2}\int_{0}^{L}\varphi ^{2}dx \nonumber\\
    		\left\vert F_{2}\left( t\right) \right\vert  &\leq &\frac{\rho _{1}}{2}%
    		\int_{0}^{L}\varphi _{t}^{2}dx+\frac{\rho _{1}}{2}\int_{0}^{L}\varphi ^{2}dx+\frac{\mu }{2}%
    		\int_{0}^{L}\psi ^{2}dx \nonumber\\
    		\left\vert F_{3}\left( t\right) \right\vert  &\leq &\rho
    		_{2}\int_{0}^{L}\varphi _{xt}^{2}dx+\frac{\rho _{2}}{2}\int_{0}^{L}\varphi
    		_{x}^{2}dx+\frac{\mu }{2}\int_{0}^{L}\psi ^{2}dx \nonumber\\
    		\left\vert F_{4}\left( t\right) \right\vert  &\leq &\frac{\rho _{2}}{2}%
    		\int_{0}^{L}\varphi _{xt}^{2}dx+\frac{\rho _{2}}{2}\int_{0}^{L}\left(
    		\varphi _{x}+\psi \right) ^{2}dx+\frac{\alpha\rho _{1}}{2\kappa }
    		\int_{0}^{L}\varphi _{xt}^{2}dx+\frac{\alpha\rho _{1}}{2\kappa }\int_{0}^{L}\psi
    		^{2}dx.\nonumber
    	\end{eqnarray}
    	Thus, there exists a constant $$\beta _{0}=\max\{N_1\rho_{1}, \rho_{1}+\frac{\rho_{2}}{2}+\frac{\mu}{2}, \frac{3\rho_{2}}{2}+\frac{\mu}{2},  \rho_{2}+\frac{\alpha\rho_{1}}{\kappa}\}>0,$$ such that
    	\begin{equation*}
    		\left\vert \mathcal{L}\left( t\right) -NE\left( t\right) \right\vert
    		=\left\vert N_{1}F_{1}(t)+F_{2}(t)+ F_{3}(t)+ F_{4}(t)\right\vert
    		\leq \beta _{0}\mathcal{E}\left( t\right).
    	\end{equation*}
    \end{proof}

    \begin{theorem}\label{the2}
    	Assume that (\ref{eq9}), (\ref{con}) hold. Then, there exist a positive constants $\lambda _{1}$ and $\lambda _{2}$
    	such that the energy functional (\ref{energy}) satisfies
    	\begin{equation}
    	\mathcal{E}(t)\leq \lambda _{2}e^{-\lambda _{1}t},\forall t\geq 0.
    	\label{eq2.26.0}
    	\end{equation}
    \end{theorem}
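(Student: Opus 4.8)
The plan is to construct the Lyapunov functional $\mathcal{L}(t)$ defined in \eqref{Functional L(t)} and show that, for suitable choices of the constants $N$ and $N_1$, it is equivalent to the energy $\mathcal{E}(t)$ (this equivalence is exactly the content of the preceding lemma) and that it satisfies a differential inequality of the form $\mathcal{L}'(t) \le -c\,\mathcal{E}(t)$ for some $c>0$. From these two facts the exponential decay follows by a standard argument: combining $\mathcal{L}'(t) \le -c\,\mathcal{E}(t) \le -\frac{c}{N+\beta_0}\mathcal{L}(t)$, Gronwall's inequality gives $\mathcal{L}(t) \le \mathcal{L}(0) e^{-\lambda_1 t}$ with $\lambda_1 = c/(N+\beta_0)$, and then $(N-\beta_0)\mathcal{E}(t) \le \mathcal{L}(t) \le \mathcal{L}(0)e^{-\lambda_1 t} \le (N+\beta_0)\mathcal{E}(0)e^{-\lambda_1 t}$, which yields \eqref{eq2.26.0} with $\lambda_2 = \frac{N+\beta_0}{N-\beta_0}\mathcal{E}(0)$.

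First I would differentiate $\mathcal{L}(t) = N\mathcal{E}(t) + N_1 F_1(t) + F_2(t) + F_3(t) + F_4(t)$ and substitute the estimates from Lemmas \ref{lem1}--\ref{lem4} together with the energy dissipation identity $\mathcal{E}'(t) = -\kappa\int_0^L \theta_x^2\,dx - h\int_0^L P_x^2\,dx$ (the analogue of \eqref{e'} for system \eqref{sys1.151}, which also produces the damping term $-\mu\int_0^L \psi_t^2\,dx$). Collecting the coefficients of each quadratic term $\int_0^L \varphi_t^2$, $\int_0^L \varphi_{tt}^2$, $\int_0^L \varphi_{tx}^2$, $\int_0^L (\varphi_x+\psi)^2$, $\int_0^L \psi_x^2$, $\int_0^L \psi_t^2$, $\int_0^L \theta_x^2$, $\int_0^L P_x^2$, I would then impose that each coefficient be strictly negative. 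The term $\int_0^L \varphi_t^2$ appears with coefficient $-N_1\rho_1 + \rho_1$ from $F_1$ and $F_2$, so choosing $N_1$ large makes it negative; the $\int_0^L \psi_t^2$ term (from $F_4$) is controlled by taking $N$ large relative to the constants in $F_4$, since $\psi_t$ is dissipated with coefficient $\mu$ after using $\mathcal{E}'$; the $\int_0^L \theta_x^2$ and $\int_0^L P_x^2$ terms accumulated from $F_2, F_3, F_4$ are absorbed by $N$ times the $\kappa\int_0^L\theta_x^2$ and $h\int_0^L P_x^2$ coming from $N\mathcal{E}'(t)$. The remaining terms $\int_0^L \varphi_{tx}^2$, $\int_0^L \varphi_{tt}^2$, $\int_0^L(\varphi_x+\psi)^2$ and $\int_0^L \psi_x^2$ are handled by the negative contributions in Lemmas \ref{lem3} and \ref{lem4}, provided $\varepsilon$ in Lemma \ref{lem1} is chosen small and the Poincaré constant $c_*$ satisfies the smallness built into Lemma \ref{lem3}.

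The main obstacle is the bookkeeping of the order in which the constants are fixed: one must choose them in a hierarchy ($\varepsilon$ first, then $N_1$, then $N$ last and sufficiently large) so that no circular dependency arises, and verify that the coefficient of $\int_0^L \varphi_{tt}^2$ remains negative — here the term $+\frac{\rho_2^2}{\alpha}\int_0^L \varphi_{tt}^2$ from $F_2$ and $+\frac{\rho_1\rho_2}{\kappa}\int_0^L\varphi_{tt}^2$-type contributions must be dominated by the $-\frac{\rho_1\rho_2}{\kappa}\int_0^L\varphi_{tt}^2$ appearing in Lemma \ref{lem3}, which forces a relation among the physical constants that is precisely the "unusual assumption" referred to in the introduction. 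Once all coefficients are negative, one sets $c$ to be the minimum of their absolute values (times the appropriate Poincaré factors to convert $\int\psi_x^2$, $\int\varphi_x^2$, etc. into the energy expression), obtaining $\mathcal{L}'(t) \le -c\,\mathcal{E}(t)$, and the decay estimate follows as described above. This completes the proof.
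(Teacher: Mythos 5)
Your plan follows essentially the same route as the paper: the Lyapunov functional $\mathcal{L}=N\mathcal{E}+N_{1}F_{1}+F_{2}+F_{3}+F_{4}$, the equivalence $(N-\beta_{0})\mathcal{E}\leq\mathcal{L}\leq(N+\beta_{0})\mathcal{E}$, collection of the coefficients of each quadratic term from Lemmas \ref{lem1}--\ref{lem4} together with the dissipation identity, a hierarchical choice of $\varepsilon$, $N_{1}$, then $N$, and the structural restrictions on $c_{*},\alpha,\kappa,\rho_{1},\rho_{2}$ needed to keep the $\int_{0}^{L}\varphi_{tt}^{2}\,dx$ and $\int_{0}^{L}\psi_{x}^{2}\,dx$ coefficients negative, concluding with the standard Gronwall argument. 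This matches the paper's proof of Theorem \ref{the2}.
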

    	
    	\begin{proof}
    		It follows from Lemma \ref{lem1}- Lemma \ref{lem4} and using (\ref%
    		{Functional L(t)}), that for any $t>0$ and by using the inequality%
    		\begin{equation*}
    			\left( \varphi _{x}+\psi \right) ^{2}\leq 2\varphi _{x}^{2}+2\psi ^{2},
    		\end{equation*}
    	 we have
    		\begin{eqnarray}
    		\mathcal{L}^{\prime }\left( t\right)  &\leq &\left( -\delta N+\frac{2\zeta
    			_{1}^{2}}{\alpha}+c+\frac{\zeta _{1}^{2}}{4}\right) \int_{0}^{L}\theta
    		_{x}^{2}dx+\left( -hN+\frac{2\zeta _{2}^{2}}{\alpha}+c+\frac{\zeta _{2}^{2}}{4}%
    		\right) \int_{0}^{L}P_{x}^{2}dx  \nonumber \\
    		&&+\left( -\rho _{1}N_{1}-\rho _{1}\right) \int_{0}^{L}\varphi
    		_{t}^{2}dx+\left( -\mu N+c\right) \int_{0}^{L}\psi _{t}^{2}dx\label{derivee L}\\
    		&&+ \left( -\frac{%
    			3\kappa }{2}+N_{1}c+\frac{\kappa^2}{2}+\frac{\kappa^2}{\alpha}\right) \int_{0}^{L}\left( \varphi _{x}+\psi \right)
    		^{2}dx  \notag \\
    		&&-\alpha(\frac{1}{2}+(1-\frac{3c^2_*}{4}))\int_{0}^{L}\psi _{x}^{2}dx-\frac{3\rho _{2}}{2}%
    		\int_{0}^{L}\varphi _{tx}^{2}dx+(\frac{\rho_{2}^2}{\alpha}-\frac{\rho_{2}\rho_{1}}{\kappa}) \int_{0}^{L}\psi _{tt}^{2}dx. \notag
    		\end{eqnarray}%
    		We choose $N_{1}>1$ such that
    		\begin{equation*}
    			0>-\frac{%
    				3\kappa }{2}+N_{1}c+\frac{\kappa^2}{2}+\frac{\kappa^2}{\alpha},
    		\end{equation*}
    	and $c_*, \alpha, \kappa, \rho_{1}, \rho_{2}$ such that
    	$$
    	1-\frac{3c^2_*}{4}>0, \frac{\rho_{2}^2}{\alpha}-\frac{\rho_{2}\rho_{1}}{\kappa}<0.
    	$$
    		Then, we choose $N$ large enough such that
    		\begin{eqnarray*}
    			0 &>&-\delta N+\frac{2\zeta
    				_{1}^{2}}{\alpha}+c+\frac{\zeta _{1}^{2}}{4}\\
    			0 &>&-\mu N+c.
    		\end{eqnarray*}%
    		Thus, there exists a positive constant $\lambda >0$ such that
    		\begin{equation}
    		\mathcal{L}^{\prime }\left( t\right) \leq -\lambda \mathcal{E}(t).
    		\label{El wardi}
    		\end{equation}%
    		Integrating (\ref{El wardi}) over $\left( 0, t\right) $ and using (\ref%
    		{derivee L}), we can obtain that there exist two positive constants $\lambda
    		_{1}$ and $\lambda _{2}$ such that%
    		\begin{equation*}
    			\mathcal{E}(t)\leq \lambda _{2}e^{-\lambda _{1}t},\forall t\geq 0,
    		\end{equation*}%
    		which completes the proof.
    	\end{proof}
    \subsection{ With frictional damping in the vertical displacement}
    In this subsection, we consider
    \begin{equation}
    \left\{
    \begin{array}{ll}
    \rho _{1}\varphi _{tt}-\kappa (\varphi _{x}+\psi )_{x}+\mu\varphi_{t}=0 \\
    -\rho _{2}\varphi _{ttx}-b\psi _{xx}+\kappa (\varphi _{x}+\psi )-\xi
    _{1}\theta _{x}-\xi _{2}P_{x}=0 \\
    \tau _{0}\theta _{t}+dP_{t}-\delta \theta _{xx}-\xi _{1}\psi _{tx}=0 \\
    d\theta _{t}+rP_{t}-hP_{xx}-\xi _{2}\psi _{tx}=0,
    \end{array}%
    \right.  \label{sys1.152}
    \end{equation}
    where $\mu>0$, with initial conditions
    \begin{equation}
    \left\{
    \begin{array}{l}
    \varphi \left( x, 0\right) =\varphi _{0}\left( x\right), \varphi _{t}\left(
    x, 0\right) =\varphi _{1}\left( x\right), \varphi _{tt}\left( x, 0\right)
    =\varphi _{2}\left( x\right) \\
    \varphi _{ttt}(x, 0)=\varphi _{3}(x),\ \psi \left( x, 0\right) =\psi
    _{0}\left( x\right), \\
    \theta (x, 0)=\varphi _{0}(x),\ P\left( x, 0\right) =P_{0}\left( x\right), x\in \left( 0, 1\right),
    \end{array}%
    \right.  \label{con1.12}
    \end{equation}%
    where $\varphi _{0}, \varphi _{1}, \varphi _{2}, \varphi _{3}, \psi _{0}, \theta
    _{0}, P_{0}$,  are given functions (satistify the assumption of the previous section), and the Dirichlet and Newmann conditions (\ref{sys1.14}).

    \begin{lemma}
    	Define the energy of solution as
    	\begin{eqnarray}
    	\mathcal{E}\left( t\right) &=&\frac{1}{2}\int_{0}^{L}\left[ \rho _{1}\varphi
    	_{t}^{2}+\alpha \psi _{x}^{2}+\kappa (\varphi _{x}+\psi )^{2}+\frac{\rho
    		_{1}\rho _{2}}{\kappa }\varphi _{tt}^{2}+\rho _{2}\varphi _{tx}^{2}\right] dx
    	\notag \\
    	&&+\frac{1}{2}\int_{0}^{L}\left[ \tau _{0}\theta ^{2}+rP^{2}+2d\theta P%
    	\right] dx,  \label{eq2.2.0}
    	\end{eqnarray}%
    	satisfies
    	\begin{equation}
    	\mathcal{E}^{\prime }\left( t\right) =-\delta \int_{0}^{L}\theta
    	_{x}^{2}dx-h\int_{0}^{L}P_{x}^{2}dx-\mu\int_{0}^{L}\varphi _{t}^{2}dx-\frac{\mu\rho_{2}}{\kappa}\int_{0}^{L}\varphi _{tt}^{2}dx\leq 0. \label{eq2.2}
    	\end{equation}
    \end{lemma}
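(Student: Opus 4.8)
The plan is to reproduce, for system (\ref{sys1.152}), the energy computation already carried out in Section~2 for the undamped system (see (\ref{sys1.16.0})--(\ref{e'})), while carefully tracking the extra contributions of the damping term $\mu\varphi_t$. First I would multiply $(\ref{sys1.152})_1$ by $\varphi_t$, $(\ref{sys1.152})_2$ by $\psi_t$, $(\ref{sys1.152})_3$ by $\theta$ and $(\ref{sys1.152})_4$ by $P$, integrate each over $(0,L)$, and integrate by parts using the boundary conditions (\ref{sys1.14}). Since $\varphi,\psi,\theta,P$ vanish at $x=0,L$ for all $t$ (hence so do $\varphi_{tt}$ and $\psi_t$ there), every boundary term drops, and the only genuinely new term compared with (\ref{sys1.16.0}) is $+\mu\int_0^L\varphi_t^2\,dx$ arising from the damping in the first equation.

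The one step requiring care is the term $\rho_2\int_0^L\varphi_{tt}\psi_{tx}\,dx$ that appears (after integrating $-\rho_2\varphi_{ttx}$ by parts against $\psi_t$) in the $\psi_t$-identity; it has no definite sign and must be rewritten. Differentiating $(\ref{sys1.152})_1$ in $t$ gives $\kappa\psi_{xt}=\rho_1\varphi_{ttt}-\kappa\varphi_{xxt}+\mu\varphi_{tt}$, and substituting this, together with the further integration by parts $-\rho_2\int_0^L\varphi_{tt}\varphi_{xxt}\,dx=\tfrac{\rho_2}{2}\tfrac{d}{dt}\int_0^L\varphi_{tx}^2\,dx$, yields
\[
\rho_2\int_0^L\varphi_{tt}\psi_{tx}\,dx=\frac{\rho_1\rho_2}{2\kappa}\frac{d}{dt}\int_0^L\varphi_{tt}^2\,dx+\frac{\rho_2}{2}\frac{d}{dt}\int_0^L\varphi_{tx}^2\,dx+\frac{\mu\rho_2}{\kappa}\int_0^L\varphi_{tt}^2\,dx .
\]
This simultaneously accounts for the $\tfrac{\rho_1\rho_2}{\kappa}\varphi_{tt}^2+\rho_2\varphi_{tx}^2$ terms in the energy (\ref{eq2.2.0}) and for the extra dissipation $\tfrac{\mu\rho_2}{\kappa}\int_0^L\varphi_{tt}^2\,dx$ in (\ref{eq2.2}).

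Then I would add the four identities. The couplings $\kappa\int_0^L(\varphi_x+\psi)\varphi_{tx}\,dx$ and $\kappa\int_0^L(\varphi_x+\psi)\psi_t\,dx$ combine into $\tfrac{\kappa}{2}\tfrac{d}{dt}\int_0^L(\varphi_x+\psi)^2\,dx$; the terms $-\xi_1\int_0^L\theta_x\psi_t\,dx$ and $-\xi_2\int_0^L P_x\psi_t\,dx$ from the second equation cancel against $\xi_1\int_0^L\psi_t\theta_x\,dx$ and $\xi_2\int_0^L\psi_t P_x\,dx$ from the third and fourth; and $d\int_0^L P_t\theta\,dx+d\int_0^L\theta_t P\,dx=d\tfrac{d}{dt}\int_0^L\theta P\,dx$. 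Collecting everything gives exactly
\[
\mathcal{E}'(t)=-\delta\int_0^L\theta_x^2\,dx-h\int_0^L P_x^2\,dx-\mu\int_0^L\varphi_t^2\,dx-\frac{\mu\rho_2}{\kappa}\int_0^L\varphi_{tt}^2\,dx,
\]
and the right-hand side is manifestly $\le 0$; that $\mathcal{E}(t)$ is itself nonnegative follows from the positive-definiteness of $\Lambda$, i.e. from (\ref{eq9})--(\ref{con}) with $\tau_0\theta^2+rP^2+2d\theta P\ge 0$. The only real obstacle is bookkeeping — keeping the substitution for $\psi_{tx}$ and the vanishing boundary terms straight — since all remaining manipulations are routine integrations by parts.
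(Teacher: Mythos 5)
Your proposal is correct and follows essentially the same route as the paper: multiply the four equations of (\ref{sys1.152}) by $\varphi_t,\psi_t,\theta,P$, integrate by parts, and eliminate the indefinite term $\rho_2\int_0^L\varphi_{tt}\psi_{tx}\,dx$ by substituting $\kappa\psi_{xt}=\rho_1\varphi_{ttt}-\kappa\varphi_{xxt}+\mu\varphi_{tt}$ from the time-differentiated first equation, which is precisely the paper's identity (\ref{Changement}) and produces the extra dissipation $\frac{\mu\rho_2}{\kappa}\int_0^L\varphi_{tt}^2\,dx$. Your write-up is in fact more explicit about the cancellations and boundary terms than the paper's own terse argument.
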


    \begin{proof}
    	Multiplying equations of (\ref{sys1.152}) by $\varphi _{t}, \psi_{t}, \theta, P$ respectively, using integration by parts, and (\ref{sys1.14}), we get
    	\begin{equation}
    	\left\{
    	\begin{array}{l}
    	\dfrac{\rho _{1}}{2}\dfrac{d}{dt}\displaystyle\int_{0}^{L}\varphi
    	_{t}^{2}dx+\kappa \displaystyle\int_{0}^{L}(\varphi _{x}+\psi )\varphi
    	_{tx}dx+\mu
    	\displaystyle\int_{0}^{L}\varphi _{t}^{2}dx=0 \\
    	+\rho _{2}\displaystyle\int_{0}^{L}\varphi _{tt}\psi _{tx}dx+\frac{\alpha }{2%
    	}\frac{d}{dt}\displaystyle\int_{0}^{L}\psi _{x}^{2}dx+\kappa \displaystyle%
    	\int_{0}^{L}(\varphi _{x}+\psi )\psi _{t}dx \\
    	\hspace{1cm}-\xi _{1}\displaystyle\int_{0}^{L}\theta _{x}\psi _{t}dx-\xi _{2}%
    	\displaystyle\int_{0}^{L}P_{x}\psi _{t}dx =0 \\
    	\dfrac{\tau _{0}}{2}\dfrac{d}{dt}\displaystyle\int_{0}^{L}\theta ^{2}dx+d%
    	\displaystyle\int_{0}^{L}P_{t}\theta dx+\delta \displaystyle%
    	\int_{0}^{L}\theta _{x}^{2}dx-\xi _{1}\displaystyle\int_{0}^{L}\psi
    	_{xt}\theta dx=0 \\
    	\dfrac{r}{2}\dfrac{d}{dt}\displaystyle\int_{0}^{L}P^{2}dx+d\displaystyle%
    	\int_{0}^{L}\theta _{t}Pdx+h\displaystyle\int_{0}^{L}P_{x}^{2}dx-\xi _{2}%
    	\displaystyle\int_{0}^{L}\psi _{xt}Pdx=0.
    	\end{array}%
    	\right.  \label{sys1.16.02}
    	\end{equation}%
    	Now, taking the derivative $(\ref{sys1.152})_{1}$, we get%
    	\begin{equation}
    	\psi _{tx}=\rho _{1}\frac{\left( \varphi _{tt}\right) _{t}}{\kappa }-
    	\left( \varphi _{x}\right) _{tx}+\frac{\mu}{\kappa}\varphi_{tt}. \label{Changement}
    	\end{equation}%
    	Now substituting $(\ref{Changement})$ in $(\ref{sys1.152})_{2}$ using
    	integration by parts and summing. Then we obtain $\mathcal{E}$ is decreasing.
    \end{proof}

\begin{lemma}
	The functional%
	\begin{equation}
	F_{1}\left( t\right) :=-\frac{\mu}{2}\int_{0}^{1}\varphi_{t}^{2} dx-\kappa \int_{0}^{1}\varphi_{tx}\varphi_{x} dx,
	\end{equation}
	satisfies%
	\begin{eqnarray}
	F_{1}^{\prime }\left( t\right) &\leq &-\kappa\int_{0}^{1}\varphi
	_{tx}^{2}dx+\varepsilon_{1}\int_{0}^{1}\psi
	_{x}^{2}dx+c(1+\frac{1}{\varepsilon_{1}})\int_{0}^{1}\varphi_{tt}
	^{2}dx.  \label{estF1.2.5}
	\end{eqnarray}
\end{lemma}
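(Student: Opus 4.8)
The plan is to differentiate $F_1$ in time, integrate by parts once, and then remove the term $\varphi_{xx}$ by invoking the first equation of \eqref{sys1.152}; the definition of $F_1$ is tailored precisely so that the frictional term $\mu\varphi_{t}$ in that equation does not leave behind an uncontrolled $\int_0^1\varphi_t^2\,dx$.

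First I would compute directly
\begin{equation*}
F_1'(t)=-\mu\int_0^1\varphi_t\varphi_{tt}\,dx-\kappa\int_0^1\varphi_{ttx}\varphi_x\,dx-\kappa\int_0^1\varphi_{tx}^2\,dx,
\end{equation*}
where the last term is already the leading dissipative term in the claimed estimate. Since \eqref{sys1.14} forces $\varphi=\varphi_t=\varphi_{tt}=0$ at $x=0,1$, an integration by parts gives $-\kappa\int_0^1\varphi_{ttx}\varphi_x\,dx=\kappa\int_0^1\varphi_{tt}\varphi_{xx}\,dx$. Now rewrite $(\ref{sys1.152})_1$ as $\kappa\varphi_{xx}=\rho_1\varphi_{tt}+\mu\varphi_t-\kappa\psi_x$ and substitute, obtaining
\begin{equation*}
\kappa\int_0^1\varphi_{tt}\varphi_{xx}\,dx=\rho_1\int_0^1\varphi_{tt}^2\,dx+\mu\int_0^1\varphi_{tt}\varphi_t\,dx-\kappa\int_0^1\varphi_{tt}\psi_x\,dx.
\end{equation*}
The key observation is that $\mu\int_0^1\varphi_{tt}\varphi_t\,dx$ exactly cancels the term $-\mu\int_0^1\varphi_t\varphi_{tt}\,dx$ produced by the derivative of the $-\tfrac{\mu}{2}\int_0^1\varphi_t^2\,dx$ part of $F_1$, so that
\begin{equation*}
F_1'(t)=-\kappa\int_0^1\varphi_{tx}^2\,dx+\rho_1\int_0^1\varphi_{tt}^2\,dx-\kappa\int_0^1\varphi_{tt}\psi_x\,dx.
\end{equation*}

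It then remains only to handle the cross term: Young's inequality gives $-\kappa\int_0^1\varphi_{tt}\psi_x\,dx\le\varepsilon_1\int_0^1\psi_x^2\,dx+\frac{\kappa^2}{4\varepsilon_1}\int_0^1\varphi_{tt}^2\,dx$, and absorbing the coefficient $\rho_1+\frac{\kappa^2}{4\varepsilon_1}$ into $c(1+\varepsilon_1^{-1})$ with $c\ge\max\{\rho_1,\kappa^2/4\}$ yields \eqref{estF1.2.5}. I do not anticipate any genuine obstacle here: the only points needing care are the sign bookkeeping that makes the two $\mu$-terms cancel, the fact that $\psi_x$ (rather than $\psi$) appears so no Poincar\'e inequality is needed at this stage, and the remark that the residual $\int_0^1\varphi_{tt}^2\,dx$ will be controlled later by the dissipation identity \eqref{eq2.2} together with a companion functional in the Lyapunov construction.
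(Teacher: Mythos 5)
Your proposal is correct and follows essentially the same route as the paper: the paper's proof consists precisely of the identity $F_{1}'(t)=\rho_{1}\int_{0}^{1}\varphi_{tt}^{2}dx-\kappa\int_{0}^{1}\psi_{x}\varphi_{tt}dx-\kappa\int_{0}^{1}\varphi_{tx}^{2}dx$ (which you derive in full, including the cancellation of the two $\mu$-terms) followed by Young's inequality. Your observation that Poincar\'e's inequality is not actually needed, since $\psi_x$ rather than $\psi$ appears in the cross term, is also accurate.
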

\begin{proof}
	Direct computation using integration by parts, we get
	\begin{eqnarray}
	F_{1}^{\prime }\left( t\right) &=& \rho_{1} \int_{0}^{1}\varphi_{tt}^{2}dx-\kappa \int_{0}^{1}\psi_{x}\varphi_{tt}dx-\kappa\int_{0}^{1} \varphi_{tx}^{2} dx, \notag\label{eq2.6}
	\label{bib2}
	\end{eqnarray}
	estimate (\ref{estF1.2.5}) easily follows by using Young's and Poincare's inequalities.
\end{proof}
\begin{lemma}
	The functional%
	\begin{eqnarray*}
		F_{2}\left( t\right) &:=&\rho_{1}\int_{0}^{1}\varphi\varphi_{t}dx+\frac{\mu}{2}\int_{0}^{1}\varphi^{2}dx+
		\frac{\mu\rho_{2}}{2\kappa}\int_{0}^{1}\varphi_{t}^{2}dx+\rho_{2}\int_{0}^{1}\varphi_{tx}\varphi_{x}dx,
	\end{eqnarray*}
	satisfies,
	\begin{eqnarray}
	F_{2}(t) &\leq&-\frac{\rho_{1}\rho_{2}}{\kappa}\int_{0}^{1}\varphi_{tt}^{2}dx-\frac{\kappa}{2}\int_{0}^{1}(\varphi_{x}+\psi)^{2}dx-\frac{\alpha}{2}\int_{0}^{1}\psi_{x}^{2}dx\notag\\
	&& +\rho_{2}\int_{0}^{1}\varphi_{tx}^{2}dx +\rho_{1}\int_{0}^{1}\varphi_{t}^{2}dx+c\int_{0}^{1}\theta_{x}^{2}dx+c\int_{0}^{1}P_{x}^{2}dx.\label{estF2.2.10}
	\end{eqnarray}%
\end{lemma}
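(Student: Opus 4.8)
The plan is to differentiate $F_{2}$ along the solutions of \eqref{sys1.152} and then eliminate the second-order terms $\varphi_{tt}$ and $\varphi_{ttx}$ with the help of the first two equations, exactly in the spirit of the proof of Lemma \ref{lem3}; the claimed inequality \eqref{estF2.2.10} is of course a bound on $F_{2}^{\prime}(t)$. A term-by-term differentiation gives
\[
F_{2}^{\prime}(t)=\rho_{1}\!\int_{0}^{1}\!\varphi_{t}^{2}dx+\rho_{1}\!\int_{0}^{1}\!\varphi\varphi_{tt}dx+\mu\!\int_{0}^{1}\!\varphi\varphi_{t}dx+\frac{\mu\rho_{2}}{\kappa}\!\int_{0}^{1}\!\varphi_{t}\varphi_{tt}dx+\rho_{2}\!\int_{0}^{1}\!\varphi_{ttx}\varphi_{x}dx+\rho_{2}\!\int_{0}^{1}\!\varphi_{tx}^{2}dx .
\]
First I would insert \eqref{sys1.152}$_{1}$ written as $\rho_{1}\varphi_{tt}=\kappa(\varphi_{x}+\psi)_{x}-\mu\varphi_{t}$, so that after an integration by parts (using \eqref{sys1.14}) the term $\rho_{1}\int\varphi\varphi_{tt}$ turns into $-\kappa\int\varphi_{x}(\varphi_{x}+\psi)-\mu\int\varphi\varphi_{t}$, whose last summand cancels the $\mu\int\varphi\varphi_{t}$ produced by $\tfrac{\mu}{2}\int\varphi^{2}$.

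Next, integrating $\rho_{2}\int\varphi_{ttx}\varphi_{x}$ by parts and substituting $\varphi_{xx}=\tfrac{\rho_{1}}{\kappa}\varphi_{tt}+\tfrac{\mu}{\kappa}\varphi_{t}-\psi_{x}$ (again from \eqref{sys1.152}$_{1}$) yields $-\tfrac{\rho_{1}\rho_{2}}{\kappa}\int\varphi_{tt}^{2}-\tfrac{\mu\rho_{2}}{\kappa}\int\varphi_{t}\varphi_{tt}+\rho_{2}\int\varphi_{tt}\psi_{x}$; here the middle term is exactly the contribution that cancels the $\tfrac{\mu\rho_{2}}{\kappa}\int\varphi_{t}\varphi_{tt}$ appearing above, which is precisely the reason for including the weight $\tfrac{\mu\rho_{2}}{2\kappa}\int\varphi_{t}^{2}$ in $F_{2}$. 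At this stage
\[
F_{2}^{\prime}(t)=\rho_{1}\!\int_{0}^{1}\!\varphi_{t}^{2}dx-\kappa\!\int_{0}^{1}\!\varphi_{x}(\varphi_{x}+\psi)dx-\frac{\rho_{1}\rho_{2}}{\kappa}\!\int_{0}^{1}\!\varphi_{tt}^{2}dx+\rho_{2}\!\int_{0}^{1}\!\varphi_{tt}\psi_{x}dx+\rho_{2}\!\int_{0}^{1}\!\varphi_{tx}^{2}dx .
\]
The remaining cross term $\rho_{2}\int\varphi_{tt}\psi_{x}$ I would rewrite by multiplying \eqref{sys1.152}$_{2}$ (with $b$ read as $\alpha$, consistently with the reduction leading to \eqref{sys1.15}) by $\psi$, integrating over $(0,1)$ and using \eqref{sys1.14}: this gives $\rho_{2}\int\varphi_{tt}\psi_{x}=-\alpha\int\psi_{x}^{2}-\kappa\int(\varphi_{x}+\psi)\psi+\xi_{1}\int\theta_{x}\psi+\xi_{2}\int P_{x}\psi$. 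Plugging this in, the terms $-\kappa\int\varphi_{x}(\varphi_{x}+\psi)$ and $-\kappa\int(\varphi_{x}+\psi)\psi$ combine into $-\kappa\int(\varphi_{x}+\psi)^{2}$, and what is left is the sum of the good negative terms $-\tfrac{\rho_{1}\rho_{2}}{\kappa}\int\varphi_{tt}^{2}$, $-\kappa\int(\varphi_{x}+\psi)^{2}$, $-\alpha\int\psi_{x}^{2}$ and the remainder $\rho_{1}\int\varphi_{t}^{2}+\rho_{2}\int\varphi_{tx}^{2}+\xi_{1}\int\theta_{x}\psi+\xi_{2}\int P_{x}\psi$.

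Finally I would estimate $\xi_{1}\int\theta_{x}\psi$ and $\xi_{2}\int P_{x}\psi$ by Young's inequality followed by Poincar\'{e}'s inequality $\int\psi^{2}\le c_{*}^{2}\int\psi_{x}^{2}$, choosing the Young parameter so that the two $\psi_{x}^{2}$ contributions add up to $\tfrac{\alpha}{2}\int\psi_{x}^{2}$, and I would retain only $-\tfrac{\kappa}{2}\int(\varphi_{x}+\psi)^{2}$ out of $-\kappa\int(\varphi_{x}+\psi)^{2}$; this yields \eqref{estF2.2.10} with $c$ depending on $\xi_{1},\xi_{2},\alpha,c_{*}$. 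The computation is routine; the one place that needs genuine care — and where the precise choice of the four pieces in $F_{2}$ is used — is the bookkeeping of the $\mu$-dependent cross terms, namely checking that $\mu\int\varphi\varphi_{t}$ and $\tfrac{\mu\rho_{2}}{\kappa}\int\varphi_{t}\varphi_{tt}$ are \emph{exactly} annihilated when $\varphi_{tt}$ and $\varphi_{xx}$ are eliminated through \eqref{sys1.152}$_{1}$, so that the damping introduces no uncontrolled lower-order contribution into the estimate.
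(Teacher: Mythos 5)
Your proposal is correct and follows essentially the same route as the paper: differentiate $F_{2}$, use \eqref{sys1.152}$_{1}$ to eliminate $\varphi_{tt}$ and $\varphi_{xx}$ (which is exactly what cancels the two $\mu$-dependent cross terms), use \eqref{sys1.152}$_{2}$ tested against $\psi$ to handle $\rho_{2}\int\varphi_{tt}\psi_{x}$, and finish with Young and Poincar\'{e}. The paper states only the resulting identity and the final inequality; your write-up supplies the intermediate substitutions and correctly identifies the typographical slips in the statement ($F_{2}$ for $F_{2}^{\prime}$, $b$ for $\alpha$, $\beta$ for $\kappa$).
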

\begin{proof}
	Differentiating $F_{2}$, using
	integrating by parts and (\ref{sys1.14}), we get%
	\begin{eqnarray}
	F_{2}^{\prime }\left( t\right) &=&\rho_{1}\int_{0}^{1}\varphi_{t}^{2}dx-\kappa\int_{0}^{1}(\varpi_{x}+\psi)^{2}dx-\alpha\int_{0}^{1}\psi_{x}^{2}dx  \notag  \label{eq2.11}\\
	&&-\frac{\rho_{1}\rho_{2}}{\beta}\int_{0}^{1}\varphi_{tt}^{2}dx +\xi_{1}\int_{0}^{1}\theta_{x}\psi dx\notag\\
	&&+\rho_{2}\int_{0}^{1}\varpi_{tx}^{2}dx+\xi_{2}\int_{0}^{1}P_{x}\psi dx.
	\end{eqnarray}
	Using Young and Poincar\'{e}'s inequalities, we obtain (\ref{estF2.2.10})
\end{proof}

\begin{theorem} \label{the3}
	Assume (\ref{eq9}), there exist positive constants $\lambda_{1}$ and $\lambda_{2}$ such that the energy functional
	(\ref{eq2.2.0}) satisfies%
	\begin{equation}
	\mathcal{E}(t) \leq \lambda_{2}e^{-\lambda_{1}t}, \forall t\geq 0.  \label{eq2.26}
	\end{equation}
\end{theorem}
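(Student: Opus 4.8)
The plan is to repeat, for the system (\ref{sys1.152}), the Lyapunov/multiplier scheme already carried out for Theorem \ref{the2}, but now built only from the two auxiliary functionals $F_1$ and $F_2$ introduced in this subsection. The first step is to read off from the dissipation identity (\ref{eq2.2}) exactly which quantities are already controlled: the natural energy decay gives negative terms in $\int_0^L\theta_x^2\,dx$, $\int_0^L P_x^2\,dx$, $\int_0^L\varphi_t^2\,dx$ and $\int_0^L\varphi_{tt}^2\,dx$. What is missing are negative contributions in $\int_0^L\varphi_{tx}^2\,dx$, $\int_0^L(\varphi_x+\psi)^2\,dx$ and $\int_0^L\psi_x^2\,dx$; these are supplied, respectively, by the estimate (\ref{estF1.2.5}) for $F_1'$ (which produces $-\kappa\int_0^L\varphi_{tx}^2\,dx$ at the cost of an $\varepsilon_1$-small multiple of $\int_0^L\psi_x^2\,dx$ and a multiple of the already-dissipated $\int_0^L\varphi_{tt}^2\,dx$) and by the estimate (\ref{estF2.2.10}) for $F_2'$ (which produces $-\tfrac{\kappa}{2}\int_0^L(\varphi_x+\psi)^2\,dx-\tfrac{\alpha}{2}\int_0^L\psi_x^2\,dx-\tfrac{\rho_1\rho_2}{\kappa}\int_0^L\varphi_{tt}^2\,dx$ at the cost only of multiples of $\int_0^L\varphi_{tx}^2\,dx$, $\int_0^L\varphi_t^2\,dx$, $\int_0^L\theta_x^2\,dx$ and $\int_0^L P_x^2\,dx$, all of which are dissipated by the energy).

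Next I would set $\mathcal{L}(t):=N\mathcal{E}(t)+N_1F_1(t)+F_2(t)$ and first establish the equivalence $\mathcal{L}\sim\mathcal{E}$ exactly as in the lemma preceding Theorem \ref{the2}: Young's, Poincar\'e's and Cauchy--Schwarz's inequalities (together with $\int_0^L\varphi_x^2\,dx\le 2\int_0^L(\varphi_x+\psi)^2\,dx+2c_*^2\int_0^L\psi_x^2\,dx$ and $\int_0^L\varphi^2\,dx\le c_*^2\int_0^L\varphi_x^2\,dx$) give a constant $\beta_0$, depending only on $N_1$ and the physical parameters, with $|\mathcal{L}(t)-N\mathcal{E}(t)|=|N_1F_1(t)+F_2(t)|\le\beta_0\mathcal{E}(t)$, so that $(N-\beta_0)\mathcal{E}(t)\le\mathcal{L}(t)\le(N+\beta_0)\mathcal{E}(t)$ once $N>\beta_0$. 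Differentiating and collecting terms, $\mathcal{L}'(t)$ becomes a linear combination of $\int_0^L\theta_x^2$, $\int_0^L P_x^2$, $\int_0^L\varphi_t^2$, $\int_0^L\varphi_{tt}^2$, $\int_0^L\varphi_{tx}^2$, $\int_0^L(\varphi_x+\psi)^2$ and $\int_0^L\psi_x^2$ with coefficients (schematically) $c-\delta N$, $c-hN$, $\rho_1-\mu N$, $N_1c(1+\varepsilon_1^{-1})-\tfrac{\rho_1\rho_2}{\kappa}-\tfrac{\mu\rho_2}{\kappa}N$, $\rho_2-N_1\kappa$, $-\tfrac{\kappa}{2}$ and $N_1\varepsilon_1-\tfrac{\alpha}{2}$. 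The constants are then fixed in this order: choose $N_1$ large enough that $\rho_2-N_1\kappa<0$; with $N_1$ frozen, choose $\varepsilon_1>0$ small enough that $N_1\varepsilon_1<\tfrac{\alpha}{2}$; finally choose $N$ large enough that the coefficients of $\int_0^L\theta_x^2$, $\int_0^L P_x^2$, $\int_0^L\varphi_t^2$ and $\int_0^L\varphi_{tt}^2$ are all negative and $N>\beta_0$. This yields a constant $\lambda>0$ with $\mathcal{L}'(t)\le-\lambda\,\mathcal{E}(t)\le-\tfrac{\lambda}{N+\beta_0}\mathcal{L}(t)$ for all $t\ge0$; integrating this differential inequality over $(0,t)$ and invoking the equivalence once more gives $\mathcal{E}(t)\le\lambda_2 e^{-\lambda_1 t}$, which is (\ref{eq2.26}).

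The point requiring the most care is the bookkeeping of the term $\int_0^L\varphi_{tt}^2\,dx$: it enters with a positive coefficient in both (\ref{estF1.2.5}) and (\ref{estF2.2.10}), and the $F_1$-contribution $N_1c(1+\varepsilon_1^{-1})$ blows up as $\varepsilon_1\to0$. One must therefore be sure that, once $N_1$ and $\varepsilon_1$ are fixed, there is still enough energy dissipation $\tfrac{\mu\rho_2}{\kappa}\int_0^L\varphi_{tt}^2\,dx$ available from (\ref{eq2.2}) to dominate the (now fixed) constant $N_1c(1+\varepsilon_1^{-1})-\tfrac{\rho_1\rho_2}{\kappa}$ by taking $N$ large. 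Here this is automatic, in contrast to Theorem \ref{the2}, precisely because the damping $\mu\varphi_t$ already produces a $\varphi_{tt}$-dissipative term in the energy identity; the role played by the structural condition $\tfrac{\rho_2^2}{\alpha}-\tfrac{\rho_2\rho_1}{\kappa}<0$ in the previous subsection is thus not needed, and (\ref{eq9}) (ensuring positivity of the thermodiffusion quadratic form in (\ref{eq2.2.0})) is the only extra hypothesis.
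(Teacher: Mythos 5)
Your proposal follows essentially the same route as the paper: the Lyapunov functional $\mathcal{L}=N\mathcal{E}+N_{1}F_{1}+F_{2}$, the equivalence $\mathcal{L}\sim\mathcal{E}$ via Young and Poincar\'e, and the same ordering of constants ($N_{1}$ large, $\varepsilon_{1}$ small of order $\alpha/N_{1}$, then $N$ large), leading to $\mathcal{L}'\le-\lambda\mathcal{E}$ and exponential decay. Your remark that the $\varphi_{tt}$-dissipation coming from the damping $\mu\varphi_{t}$ is what absorbs the $N_{1}c(1+\varepsilon_{1}^{-1})$ contribution is exactly the mechanism the paper exploits, so the argument is correct and matches the paper's proof.
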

	\begin{proof}
		We define a Lyapunov functional%
		\begin{equation}
		\mathcal{L}\left( t\right) :=N\mathcal{E}\left( t\right)+N_{1}F_{1}(t)+F_{2}(t),  \label{eq2.27}
		\end{equation}
		where $N, N_{1}>0$.
		By differentiating (\ref{eq2.27}) and using (\ref{eq2.2}), (\ref{estF1.2.5}%
		) and (\ref{estF2.2.10})  we have
		\begin{eqnarray*}
			\mathcal{L}^{\prime }\left( t\right) &\leq & -\left[ N\mu-\rho_{2}\right] \int_{0}^{1}\varphi_{t}^{2}dx -\left[ \frac{\mu\rho_{2}}{\kappa}N+\frac{\rho_{1}\rho_{2}}{\beta}-cN_{1}(1+\frac{1}{\varepsilon_{1}})\right] \int_{0}^{1}\varphi _{tt}^{2}dx\\
			&&-\left[ \frac{\alpha}{2}-\varepsilon_{1}N_{1}\right]\int_{0}^{1}\psi_{x}^{2} dx-\kappa
			\int_{0}^{1}(\varphi_{x}+\psi)^{2}dx\\
			&&-\left[ \kappa N_{1}-\rho_{2} \right] \int_{0}^{1}\varphi_{tx}^{2}dx-\left[ Nh-c\right] \int_{0}^{1}P_{x}^{2}dx-\left[ N\delta-c\right] \int_{0}^{1}\theta_{x}^{2}dx.
		\end{eqnarray*}
		By setting $\varepsilon_{1}=\frac{\alpha}{4N_{1}}$, and we choose $N_{1}$ large enough so that%
		\begin{equation*}
			\alpha_{1}=\kappa N_{1}-\rho_{2} >0.
		\end{equation*}
		Thus, we arrive at
		\begin{eqnarray}
		\mathcal{L}^{\prime }\left( t\right) &\leq & -\left[ N\mu-\rho_{1}\right] \int_{0}^{1}\varphi_{t}^{2}dx
		-\left[\frac{\mu\rho_{2}}{\kappa}N+\alpha_{2}-c\right] \int_{0}^{1}\varphi _{tt}^{2}dx\notag\\
		&&-\alpha_{3}\int_{0}^{1}\psi_{x}^{2} dx- \alpha_{4}
		\int_{0}^{1}(\varphi_{x}+\psi)^{2}dx- \alpha_{1}\int_{0}^{1}\varphi_{tx}^{2}dx\notag\\
		&&-\left[ Nh-c\right] \int_{0}^{1}P_{x}^{2}dx-\left[ N\delta-c\right] \int_{0}^{1}\theta_{x}^{2}dx, \label{eq2.28}
		\end{eqnarray}
		where $\alpha_{2}=\frac{\rho_{1}\rho_{2}}{\kappa}, \alpha_{3}=\frac{\alpha}{4}, \alpha_{4}=\kappa$.\\
		On the other hand, if we let
		\begin{equation*}
			\mathcal{H}\left( t\right) =N_{1}F_{1}(t)+F_{2}(t).
		\end{equation*}
		Then
		\begin{eqnarray*}
			\left\vert \mathcal{H}\left( t\right) \right\vert &\leq
			&\frac{\mu}{2}N_{1} \int_{0}^{1}\varphi_{t}^{2} dx
			+\kappa N_{1}\int_{0}^{1}\vert\varphi _{tx}\varphi_{x}\vert dx+\rho_{1}\int_{0}^{1}\vert\varphi\varphi_{t}\vert dx\\
			&&+\frac{\mu}{2}\int_{0}^{1}\varphi^{2} dx
			+\frac{\mu\rho_{2}}{2\kappa}\int_{0}^{1}\varphi_{t}^{2} dx+\rho_{2}\int_{0}^{1}\vert\varphi_{tx}\varphi_{x}\vert dx.
		\end{eqnarray*}
		By using Young, Poincar\'{e} inequalities and the fact that
		\begin{equation*}
			\int_{0}^{1}\varphi^{2} dx\leq 2c\int_{0}^{1}(\varphi_{x}+\psi)^{2} dx+2c\int_{0}^{1}\psi_{x}^{2} dx,
		\end{equation*}
		we get
		\begin{eqnarray*}
			\left\vert \mathcal{H}\left( t\right) \right\vert &\leq & c\int_{0}^{1}\left(
			\varphi_{t}^{2}+\varphi _{tx}^{2}+\psi _{x}^{2}+\varphi_{tt}^{2}+(\varphi_{x}+\psi) ^{2}\right) dx\\
			&\leq &c\mathcal{E}\left( t\right).
		\end{eqnarray*}
		Consequently,
		\begin{equation*}
			\left\vert \mathcal{H} \left( t\right) \right\vert =\left\vert \mathcal{L}\left( t\right) -N\mathcal{E}\left( t\right) \right\vert \leq
			c\mathcal{E}\left( t\right),
		\end{equation*}
		which yield
		\begin{equation}
		\left( N-c\right) \mathcal{E}\left( t\right) \leq \mathcal{L}\left( t\right) \leq
		\left( N+c\right)\mathcal{E}\left( t\right).  \label{eq2.29}
		\end{equation}
		Now, we choose $N$ large enough so that
		\begin{equation*}
			\frac{\mu\rho_{2}}{\kappa}N+\alpha_{2}-c>0, N\mu-\rho_{1}>0, N-c>0, N\delta-\rho_{1}>0, Nh-\rho_{1}>0.
		\end{equation*}
		We get
		\begin{equation}
		c_{1}\mathcal{E}(t) \leq \mathcal{L}\left( t\right) \leq c_{2}\mathcal{E}(t), \forall t\geq 0,  \label{eq2.31}
		\end{equation}
		and used (\ref{eq2.2}), estimates (\ref{eq2.28}), (\ref{eq2.29}),
		respectively, we get
		\begin{equation}
		\mathcal{L}^{\prime }\left( t\right) \leq -h_{1}\mathcal{E}\left( t\right), \forall t\geq t_{0},  \label{eq2.30}
		\end{equation}
		for some $h_{1}, c_{1}, c_{2}>0.$\\
		A combination (\ref{eq2.30}) with (\ref{eq2.31}), gives
		\begin{equation}
		\mathcal{L}^{\prime }\left( t\right) \leq -\lambda_{1}\mathcal{L}\left( t\right),  \label{eq2.32}
		\end{equation}
		where $\lambda_{1}=\frac{h_{1}}{c_{2}}$.
		Finally, a simple integration of (\ref{eq2.32}) we obtain (\ref{eq2.26}). This completes the proof.
	\end{proof}

\section{Numerical approximation}
To obtain the week formulation we multiply (\ref{sys1.15}) by $\bar{\varphi},$ $\bar{\psi}$,  $\bar{\theta}$ and $\bar{P}$, then integrating by part where $\Phi =\varphi
_{t}$ and $\Psi =\psi _{t}$ to obtain

\begin{eqnarray}
\left\{
\begin{array}{ll}
\rho _{1}(\Phi _{t},\bar{\varphi})+\kappa (\varphi _{x}+\psi,\bar{\varphi}%
_{x})=0 \\
\rho _{2}(\Phi _{t},\bar{\psi}_{x})+\alpha (\psi _{x}, \bar{\psi}_{x})+\kappa
(\varphi _{x}+\psi,\bar{\psi})+\xi _{1}(\theta, \bar{\psi}_{x})+\xi _{2}(P,%
\bar{\psi}_{x})=0 \\
c(\theta _{t}, \bar{\theta})+d(P_{t},\bar{\theta})+\kappa (\theta _{x},\bar{%
\theta}_{x})+\xi _{1}(\Psi,\bar{\theta}_{x})=0 \\
d(\theta _{t}, \bar{P})+r(P_{t}, \bar{P})+h(P_{x}, \bar{P}_{x})+\xi _{2}(\Psi,%
\bar{P}_{x})=0,
\end{array}
\right.   \label{1}
\end{eqnarray}
where $(.,.)$ is the inner product in $L^{2}(0, L)$.\\
Let us partition the interval $(0, L)$ into subintervals $I_{j}=(x_{j-1}, x_{j})$ of length $h=\frac{1}{s}$ with $0=x_{0}<x_{1}<\ldots <x_{s}=L$ and define the linear \ polynomial
$$
S_{0}^{h}=\{u\in H_{0}^{1}(0, L)|u\in C([0, L]), u|_{I_{j}}\in P_1(K)\}.
$$
For a given final time $T$ and a positive integer $N$, let $\Delta t=\frac{T}{N}$ be the time step and $t_{n}=n\Delta t, n=0, \ldots, N$. The finite
element method for the Dirichlet homogeneous boundary condition is to $\Phi
_{h}^{n},$ $\Psi _{h}^{n},$ $\theta _{h}^{n},$ $P_{h}^{n}\in S_{0}^{h}$, $%
n=1, \ldots, N$, such that for all $\bar{\varphi}_{h}$, $\bar{\psi}_{h}$, $%
\bar{\theta}_{h},$ $\bar{P}_{h}$, we have
\begin{eqnarray}
\left\{
\begin{array}{ll}
\frac{\rho _{1}}{\Delta t}(\Phi _{h}^{n}-\Phi _{h}^{n-1},\bar{\varphi}%
_{h})+\kappa (\varphi _{hx}^{n}+\psi _{h}^{n},\bar{\varphi}_{hx})=0 \\
\frac{\rho _{2}}{\Delta t}(\Phi _{h}^{n}-\Phi _{h}^{n-1},\bar{\psi}%
_{h, x})+\alpha (\psi _{hx}^{n},\bar{\psi}_{hx})+\kappa (\varphi
_{hx}^{n}+\psi _{h}^{n},\bar{\psi}_{h})\\
\ \ \ \ \ \ \ \ \ \ \ \ \ \ \ \ \ \ \ \ \ \ \ \ \ \ \ \ \ \ +\xi _{1}(\theta _{h}^{n},\bar{\psi}%
_{hx})+\xi _{2}(P_{h}^{n},\bar{\psi}_{hx})=0 \\
\frac{c}{\Delta t}(\theta _{h}^{n}-\theta _{h}^{n-1},\bar{\theta}_{h})+\frac{%
	d}{\Delta t}(P_{h}^{n}-P_{h}^{n-1},\bar{\theta}_{h})+\kappa (\theta
_{hx}^{n},\bar{\theta}_{hx})+\xi _{1}(\Psi _{h}^{n},\bar{\theta}_{hx})=0 \\
\frac{d}{\Delta t}(\theta _{h}^{n}-\theta _{h}^{n-1},\bar{P}_{h})+\frac{r}{%
	\Delta t}(P_{h}^{n}-P_{h}^{n-1},\bar{P}_{h})+h(P_{hx}^{n},\bar{P}_{hx})+\xi
_{2}(\Psi _{h}^{n},\bar{P}_{hx})=0,
\end{array}%
\right.  \label{4}
\end{eqnarray}
where $\varphi _{h}^{n}=\varphi _{h}^{n-1}+\Delta t\Phi _{h}^{n}$ and $\psi
_{h}^{n}=\psi _{h}^{n-1}+\Delta t\Psi _{h}^{n}$. \\
Here, $\varphi _{h}^{0}$, $%
\Phi _{h}^{0}$, $\psi _{h}^{0}$, $\Psi _{h}^{0}$, $\theta _{h}^{0}$, $%
P_{h}^{0}$ are approximations to $\varphi ^{0}$, $\varphi ^{1}$, $\psi ^{0}$, $\psi ^{1}$, $\theta ^{0}$, $P^{0}$ respectively.\\
 Let us introduce the
discrete energy
\begin{eqnarray}
E^{n}=\frac{1}{2}\Big(\rho _{1}\left\vert \left\vert \Phi _{h}^{n}\right\vert
\right\vert ^{2}+\frac{\rho _{1}\rho _{2}}{k}\left\vert \left\vert \bar{%
	\partial}\Phi _{h}^{n}\right\vert \right\vert ^{2}+\rho _{2}\left\vert
\left\vert \Phi _{h, x}^{n}\right\vert \right\vert ^{2}+\kappa \left\vert
\left\vert \varphi _{hx}^{n}+\psi _{h}^{n}\right\vert \right\vert
^{2}\nonumber\\
+r\left\vert \left\vert P_{h}^{n}\right\vert \right\vert
^{2}+c\left\vert \left\vert \theta _{h}^{n}\right\vert \right\vert
^{2}+\alpha \left\vert \left\vert \psi _{hx}^{n}\right\vert \right\vert
^{2}+2d(P_{h}^{n},\theta _{h}^{n})\Big). \label{Eh}
\end{eqnarray}
\begin{theorem}\label{Th4}
	The discrete energy (\ref{Eh}) decays to zero as $t$ goes to $\infty$, that is,
\end{theorem}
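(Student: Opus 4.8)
The plan is to combine a discrete energy dissipation identity with a discrete LaSalle-type invariance argument, exploiting that for a fixed mesh the space $S_0^h$ is finite dimensional, so that the scheme (\ref{4}) defines a bounded linear recurrence whose only dissipation-free invariant state is the origin.

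First I would establish the discrete dissipation law. Taking $\bar\varphi_h=\Phi_h^n$, $\bar\psi_h=\Psi_h^n$, $\bar\theta_h=\theta_h^n$, $\bar P_h=P_h^n$ in the four equations of (\ref{4}), and using throughout the elementary identity $(a-b,a)=\tfrac12(\|a\|^2-\|b\|^2+\|a-b\|^2)$ together with $\varphi_h^n-\varphi_h^{n-1}=\Delta t\,\Phi_h^n$ and $\psi_h^n-\psi_h^{n-1}=\Delta t\,\Psi_h^n$, the coupling and $\xi_1,\xi_2$ cross terms cancel exactly as in the continuous computation leading to (\ref{e'}); the backward difference of the first equation supplies the $\|\bar{\partial}\Phi_h^n\|^2$ and $\|\Phi_{h,x}^n\|^2$ contributions, mirroring the identity used for (\ref{q1.4}). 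This yields
\[
E^n-E^{n-1}=-\Delta t\,\kappa\,\|\theta_{hx}^n\|^2-\Delta t\,h\,\|P_{hx}^n\|^2-R^n,\qquad R^n\ge 0,
\]
where $R^n$ collects the nonnegative numerical-dissipation terms $\tfrac12\|\cdot^n-\cdot^{n-1}\|^2$ associated with each quadratic entry of $E^n$. In particular $E^n\le E^{n-1}$, and since (\ref{con}) makes the form $c\theta^2+rP^2+2d\theta P$ positive definite we have $E^n\ge 0$, so $\{E^n\}$ is nonincreasing and bounded below, hence convergent to some $E_\infty\ge 0$.

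Next I would upgrade this to $E_\infty=0$. Telescoping the identity gives $\sum_{k\ge 1}\big[\Delta t(\kappa\|\theta_{hx}^k\|^2+h\|P_{hx}^k\|^2)+R^k\big]\le E^0<\infty$, so every summand tends to zero: $\theta_{hx}^n,P_{hx}^n\to 0$ and all backward differences $\Phi_h^n-\Phi_h^{n-1}$, $\Phi_{h,x}^n-\Phi_{h,x}^{n-1}$, $\psi_{hx}^n-\psi_{hx}^{n-1}$, $(\varphi_{hx}^n+\psi_h^n)-(\varphi_{hx}^{n-1}+\psi_h^{n-1})$, $\theta_h^n-\theta_h^{n-1}$, $P_h^n-P_h^{n-1}$ tend to zero. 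Because $S_0^h\subset H_0^1(0,L)$, the Poincar\'e inequality turns $\theta_{hx}^n,P_{hx}^n\to0$ into $\theta_h^n,P_h^n\to0$. Since $E^n\le E^0$ bounds the whole state and $S_0^h$ is finite dimensional, the trajectory is precompact, and any accumulation point $(\varphi^\ast,\Phi^\ast,\psi^\ast,\Psi^\ast,\theta^\ast,P^\ast)$ is a steady state of the recurrence with $\theta^\ast=P^\ast=0$.

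It remains to identify this steady state as the origin, which is the heart of the argument. Vanishing of $R^n$ along the invariant limiting orbit forces $\Phi_h^n$, $\Phi_{h,x}^n$ and $\psi_{hx}^n$ to be constant in $n$; combined with $\varphi_h^n=\varphi_h^{n-1}+\Delta t\,\Phi_h^n$, $\psi_h^n=\psi_h^{n-1}+\Delta t\,\Psi_h^n$, boundedness of the orbit, and the homogeneous Dirichlet condition ($\Phi^\ast,\Psi^\ast\in S_0^h$), one gets $\Phi^\ast=\Psi^\ast=0$, so the limit solves the steady discrete system
\[
\kappa(\varphi_x^\ast+\psi^\ast,\bar\varphi_{hx})=0,\qquad \alpha(\psi_x^\ast,\bar\psi_{hx})+\kappa(\varphi_x^\ast+\psi^\ast,\bar\psi_h)=0,
\]
for all $\bar\varphi_h,\bar\psi_h\in S_0^h$. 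Choosing $\bar\varphi_h=\varphi^\ast$ and $\bar\psi_h=\psi^\ast$ and adding gives $\kappa\|\varphi_x^\ast+\psi^\ast\|^2+\alpha\|\psi_x^\ast\|^2=0$, whence $\psi^\ast=0$ and then $\varphi^\ast=0$ by Poincar\'e. Thus the only accumulation point is the origin; since $E$ vanishes there and the full sequence $E^n$ already converges, $E_\infty=0$, i.e. $E^n\to 0$ as $t_n=n\Delta t\to\infty$. The delicate points I expect are the exact bookkeeping that isolates $R^n$ as a genuine sum of nonnegative squares (so the dissipation relation is an equality), and the verification that zero dissipation propagates from the thermal and diffusive components to a true mechanical steady state; once those are in place the concluding energy argument on the steady system closes the proof cleanly.
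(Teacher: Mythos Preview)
Your first step — testing the four equations of (\ref{4}) with $\Phi_h^n,\Psi_h^n,\theta_h^n,P_h^n$, using the polarization $(a-b,a)=\tfrac12(\|a\|^2-\|b\|^2+\|a-b\|^2)$, and handling $\frac{\rho_2}{\Delta t}(\Phi_h^n-\Phi_h^{n-1},\Psi_{h,x}^n)$ through the time-differenced first discrete equation — is exactly the computation the paper performs. The difference is that the paper retains only the cruder inequality $(a-b,a)\geq\tfrac12(\|a\|^2-\|b\|^2)$ at each occurrence and therefore stops at the monotonicity $\frac{E^n-E^{n-1}}{\Delta t}\leq 0$; this is in fact all the displayed conclusion asserts, despite the phrase ``decays to zero'' in the theorem's prose. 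Your sharpening to an exact identity with nonnegative remainder $R^n$ is correct and is the natural refinement.

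Everything from your second paragraph onward — telescoping to obtain $\theta_{hx}^n,P_{hx}^n\to0$ and the vanishing of all backward increments, using precompactness of the orbit in the finite-dimensional space $S_0^h$, and identifying the unique invariant point as the origin via the steady discrete system — is entirely absent from the paper. It delivers the genuine asymptotic statement $E^n\to0$, which the paper neither proves nor attempts. The mechanism you exploit is the backward-Euler numerical dissipation hidden in $R^n$, which damps every component of the state even though the scheme (\ref{4}), as written, carries no mechanical friction (note that the paper's own proof silently inserts a $\mu\|\Psi\|^2$ term from system (\ref{sys1.151}); your LaSalle argument does not need it). In short: the paper's approach is shorter and yields only non-increase; yours is longer but actually proves the decay the theorem claims in words.
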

\[
\frac{E^{n}-E^{n-1}}{\Delta t}\leq 0, n=1, \ldots, N
\]
\begin{proof}
	Taking
	$\bar{\varphi}_h=\widehat{\varphi}_h^n$, $\bar{\psi}_h=\widehat{\psi}_h^n$,  $\bar{\theta}_h=\theta_h^n$, and $\bar{P}_h=P_h^n$ in the scheme to get
\begin{equation}	\label{e1}
	\left\{ \begin{array}{ll}
	\frac{\rho _{1}}{2\Delta t}(\left\vert \left\vert \Phi _{h}^{n}-\Phi
	_{h}^{n-1}\right\vert \right\vert ^{2}+\left\vert \left\vert \Phi
	_{h}^{n}\right\vert \right\vert ^{2}-\left\vert \left\vert \Phi
	_{h}^{n-1}\right\vert \right\vert ^{2})+\kappa (\varphi _{hx}^{n}+\psi
	_{h}^{n},\Phi _{hx}^{n})=0 \\
	\frac{\rho _{2}}{\Delta t}(\Phi _{h}^{n}-\Phi _{h}^{n-1},\bar{\psi}%
	_{h, x})+\alpha (\psi _{hx}^{n},\Psi _{hx}^{n})+\kappa (\varphi
	_{hx}^{n}+\psi _{h}^{n},\Psi _{h}^{n})+\xi _{1}(\theta _{h}^{n},\Psi
	_{hx}^{n})\\
	\qquad \qquad\qquad \qquad\qquad+\xi _{2}(P_{h}^{n},\Psi _{hx}^{n})+\mu \left\vert \left\vert \Psi \right\vert \right\vert ^{2}=0  \\
	\frac{c}{2\Delta t}(\left\vert \left\vert \theta _{h}^{n}-\theta
	_{h}^{n-1}\right\vert \right\vert ^{2}+\left\vert \left\vert \theta
	_{h}^{n}\right\vert \right\vert ^{2}-\left\vert \left\vert \theta
	_{h}^{n-1}\right\vert \right\vert ^{2})+\frac{d}{\Delta t}%
	(P_{h}^{n}-P_{h}^{n-1},\theta _{h}^{n})\\
	\qquad +\kappa (\theta _{hx}^{n},\theta
	_{hx}^{n})+\xi _{1}(\Psi _{h}^{n},\theta _{hx}^{n})=0,\\
	\frac{r}{2\Delta t}(\left\vert \left\vert P_{h}^{n}-P_{h}^{n-1}\right\vert
	\right\vert ^{2}+\left\vert \left\vert P_{h}^{n}\right\vert \right\vert
	^{2}-\left\vert \left\vert P_{h}^{n-1}\right\vert \right\vert ^{2})+\frac{d}{%
		\Delta t}(\theta _{h}^{n}-\theta
	_{h}^{n-1}, P_{h}^{n})\\
	\qquad\qquad\qquad\qquad\qquad +h(P_{hx}^{n}, P_{hx}^{n})+\xi _{2}(\Psi
	_{h}^{n}, P_{hx}^{n})=0.
	\end{array} \right.
	\end{equation}
	Then by (\ref{e1}), we have
	\begin{eqnarray}
	0=&& \nonumber
	\frac{\rho _{1}}{2\Delta t}\Big(\left\vert \left\vert \Phi _{h}^{n}-\Phi
	_{h}^{n-1}\right\vert \right\vert ^{2}+\left\vert \left\vert \Phi
	_{h}^{n}\right\vert \right\vert ^{2}-\left\vert \left\vert \Phi
	_{h}^{n-1}\right\vert \right\vert ^{2}\Big)+\frac{\rho _{2}}{\Delta t}\Big(\Phi
	_{h}^{n}-\Phi _{h}^{n-1},\Psi _{h, x}^{n}\Big) \nonumber \\
	&&+\frac{c}{2\Delta t}\Big(\left\vert \left\vert \theta _{h}^{n}-\theta
	_{h}^{n-1}\right\vert \right\vert ^{2}+\left\vert \left\vert \theta
	_{h}^{n}\right\vert \right\vert ^{2}-\left\vert \left\vert \theta
	_{h}^{n-1}\right\vert \right\vert ^{2}\Big)\nonumber\\
	&&+\frac{r}{2\Delta t}\Big(\left\vert
	\left\vert P_{h}^{n}-P_{h}^{n-1}\right\vert \right\vert ^{2}+\left\vert
	\left\vert P_{h}^{n}\right\vert \right\vert ^{2}-\left\vert \left\vert
	P_{h}^{n-1}\right\vert \right\vert ^{2}\Big)\nonumber\\
	&& 	+\frac{d}{\Delta t}(P_{h}^{n}-P_{h}^{n-1},\theta _{h}^{n})+\kappa
	\left\vert \left\vert \theta _{hx}^{n}\right\vert \right\vert ^{2}+\frac{d}{%
		\Delta t}(\theta _{h}^{n}-\theta _{h}^{n-1}, P_{h}^{n})+h\left\vert
	\left\vert P_{hx}^{n}\right\vert \right\vert ^{2}\nonumber\\
	&& +\alpha (\psi _{hx}^{n},\Psi _{hx}^{n})+\kappa (\varphi _{hx}^{n}+\psi
	_{h}^{n},\Phi _{hx}^{n}+\Psi _{h}^{n})+\mu \left\vert \left\vert \Psi \right\vert \right\vert ^{2},\nonumber
	\end{eqnarray}
	by the fact that
	\begin{eqnarray}
	&& \nonumber
	\kappa (\varphi _{hx}^{n}+\psi _{h}^{n},\Phi _{hx}^{n}+\Psi _{h}^{n}) \geq \frac{\kappa }{2\Delta t}%
	\Big(\left\vert \left\vert \varphi _{hx}^{n}+\psi _{h}^{n}\right\vert
	\right\vert ^{2}-\left\vert \left\vert \varphi _{hx}^{n-1}+\psi
	_{h}^{n-1}\right\vert \right\vert ^{2}\Big),
	\end{eqnarray}
	and
	\begin{eqnarray}
	\alpha (\psi _{hx}^{n},\Psi _{hx}^{n})=\frac{\alpha }{\Delta t}(\psi
	_{hx}^{n},\psi _{hx}^{n}-\psi _{hx}^{n-1})\geq\frac{\alpha }{2\Delta t}%
	\Big(\left\vert \left\vert \psi _{hx}^{n}\right\vert \right\vert ^{2}-\left\vert
	\left\vert \psi _{hx}^{n-1}\right\vert \right\vert ^{2}\Big).
	\end{eqnarray}
	Moreover, from (\ref{sys1.15})$_1$, we have
	\[
	\psi _{xt}=\frac{\rho _{1}}{k}(\varphi _{tt})_{t}-\varphi _{txx},
	\]
	\\
	then,
	\begin{eqnarray}
	&& (\Phi _{h}^{n}-\Phi _{h}^{n-1},\Psi _{h, x}^{n})\nonumber\\	
	&&=(\Phi _{h}^{n}-\Phi _{h}^{n-1},\frac{\rho _{1}}{k}\Phi _{tt}-\Phi _{xx})\nonumber\\
	&&=\frac{\rho _{1}}{k\Delta t^{2}}\big(\Phi _{h}^{n}-\Phi _{h}^{n-1},\Phi
	_{h}^{n}-\Phi _{h}^{n-1}-(\Phi _{h}^{n-1}-\Phi _{h}^{n-2})\big)+(\Phi
	_{h, x}^{n}-\Phi _{h, x}^{n-1},\Phi _{h, x}^{n})\nonumber\\
	&&\geq\frac{\rho _{1}}{2k\Delta t^{2}}\Big(\left\vert \left\vert \Phi
	_{h}^{n}-\Phi _{h}^{n-1}\right\vert \right\vert ^{2}-\left\vert \left\vert
	\Phi _{h}^{n-1}-\Phi _{h}^{n-2}\right\vert \right\vert ^{2}\Big)+\frac{1}{2}\Big(\left\vert
	\left\vert \Phi _{h, x}^{n}\right\vert \right\vert ^{2}-\left\vert \left\vert
	\Phi _{h, x}^{n-1}\right\vert \right\vert ^{2}\Big).\nonumber
	\end{eqnarray}
	Then we get,
	\begin{eqnarray}
	&&\frac{\rho _{1}}{2\Delta t}\Big(\left\vert \left\vert \Phi _{h}^{n}-\Phi
	_{h}^{n-1}\right\vert \right\vert ^{2}+\left\vert \left\vert \Phi
	_{h}^{n}\right\vert \right\vert ^{2}-\left\vert \left\vert \Phi
	_{h}^{n-1}\right\vert \right\vert ^{2}\Big)+\frac{\rho _{2}}{\Delta t}\Big(\Phi
	_{h}^{n}-\Phi _{h}^{n-1}, \Psi _{h, x}^{n}\Big) \nonumber\\
	&&+\frac{c}{2\Delta t}\Big(\left\vert \left\vert \theta _{h}^{n}-\theta
	_{h}^{n-1}\right\vert \right\vert ^{2}+\left\vert \left\vert \theta
	_{h}^{n}\right\vert \right\vert ^{2}-\left\vert \left\vert \theta
	_{h}^{n-1}\right\vert \right\vert ^{2}\Big)\nonumber\\
	&&+\frac{r}{2\Delta t}\Big(\left\vert
	\left\vert P_{h}^{n}-P_{h}^{n-1}\right\vert \right\vert ^{2}+\left\vert
	\left\vert P_{h}^{n}\right\vert \right\vert ^{2}-\left\vert \left\vert
	P_{h}^{n-1}\right\vert \right\vert ^{2}\Big)\nonumber\\
	&&+\frac{d}{\Delta t}(P_{h}^{n}-P_{h}^{n-1},\theta _{h}^{n})+\kappa
	\left\vert \left\vert \theta _{hx}^{n}\right\vert \right\vert ^{2}+\frac{d}{%
		\Delta t}(\theta _{h}^{n}-\theta _{h}^{n-1}, P_{h}^{n})+h\left\vert
	\left\vert P_{hx}^{n}\right\vert \right\vert ^{2}\nonumber\\
	&&+\alpha (\psi _{hx}^{n},\Psi _{hx}^{n})+\kappa (\varphi _{hx}^{n}+\psi
	_{h}^{n},\Phi _{hx}^{n}+\Psi _{h}^{n})+\mu \left\vert \left\vert \Psi \right\vert \right\vert ^{2}\nonumber\\
	&&\geq	\frac{\rho _{1}}{2\Delta t}\Big(\left\vert \left\vert \Phi _{h}^{n}-\Phi
	_{h}^{n-1}\right\vert \right\vert ^{2}+\left\vert \left\vert \Phi
	_{h}^{n}\right\vert \right\vert ^{2}-\left\vert \left\vert \Phi
	_{h}^{n-1}\right\vert \right\vert ^{2}\Big)\nonumber \\
	&&+\frac{\rho _{1}\rho _{2}}{2k\Delta t^{3}}\Big(\left\vert \left\vert \Phi
	_{h}^{n}-\Phi _{h}^{n-1}\right\vert \right\vert ^{2}-\left\vert \left\vert
	\Phi _{h}^{n-1}-\Phi _{h}^{n-2}\right\vert \right\vert ^{2}\Big)\nonumber\\
	&&+\frac{\rho _{1}}{2\Delta t}\Big(\left\vert
	\left\vert \Phi _{h, x}^{n}\right\vert \right\vert ^{2}-\left\vert \left\vert
	\Phi _{h, x}^{n-1}\right\vert \right\vert ^{2}\Big) \nonumber\\
	&&+\frac{c}{2\Delta t}\Big(\left\vert \left\vert \theta _{h}^{n}-\theta
	_{h}^{n-1}\right\vert \right\vert ^{2}+\left\vert \left\vert \theta
	_{h}^{n}\right\vert \right\vert ^{2}-\left\vert \left\vert \theta
	_{h}^{n-1}\right\vert \right\vert ^{2}\Big)
	\nonumber\\
	&&+\frac{r}{2\Delta t}\Big(\left\vert
	\left\vert P_{h}^{n}-P_{h}^{n-1}\right\vert \right\vert ^{2}+\left\vert
	\left\vert P_{h}^{n}\right\vert \right\vert ^{2}-\left\vert \left\vert
	P_{h}^{n-1}\right\vert \right\vert ^{2}\Big)\nonumber\\
	&&+\frac{d}{\Delta t}(P_{h}^{n}-P_{h}^{n-1},\theta _{h}^{n})+\kappa
	\left\vert \left\vert \theta _{hx}^{n}\right\vert \right\vert ^{2}+\frac{d}{%
		\Delta t}(\theta _{h}^{n}-\theta _{h}^{n-1}, P_{h}^{n})+h\left\vert
	\left\vert P_{hx}^{n}\right\vert \right\vert ^{2}\nonumber\\
	&&+\alpha (\psi _{hx}^{n},\Psi _{hx}^{n})+\kappa (\varphi _{hx}^{n}+\psi
	_{h}^{n},\Phi _{hx}^{n}+\Psi _{h}^{n})+\mu \left\vert \left\vert \Psi \right\vert \right\vert ^{2}.\nonumber
	\end{eqnarray}
	This lead to
	\begin{eqnarray}
 0\geq	&&  \frac{\rho _{1}}{2\Delta t}\Big(\left\vert \left\vert \Phi _{h}^{n}-\Phi
	_{h}^{n-1}\right\vert \right\vert ^{2}+\left\vert \left\vert \Phi
	_{h}^{n}\right\vert \right\vert ^{2}-\left\vert \left\vert \Phi
	_{h}^{n-1}\right\vert \right\vert ^{2}\Big) 	\nonumber \\
	&&+\frac{\rho _{1}\rho _{2}}{2k\Delta t^{3}}\Big(\left\vert \left\vert \Phi
	_{h}^{n}-\Phi _{h}^{n-1}\right\vert \right\vert ^{2}-\left\vert \left\vert
	\Phi _{h}^{n-1}-\Phi _{h}^{n-2}\right\vert \right\vert ^{2}\Big)\nonumber\\
	&&+\frac{\rho _{1}}{2\Delta t}\Big(\left\vert
	\left\vert \Phi _{h, x}^{n}\right\vert \right\vert ^{2}-\left\vert \left\vert
	\Phi _{h, x}^{n-1}\right\vert \right\vert ^{2}\Big)\nonumber \\
	&&
	+\frac{c}{2\Delta t}\Big(\left\vert \left\vert \theta _{h}^{n}-\theta
	_{h}^{n-1}\right\vert \right\vert ^{2}+\left\vert \left\vert \theta
	_{h}^{n}\right\vert \right\vert ^{2}-\left\vert \left\vert \theta
	_{h}^{n-1}\right\vert \right\vert ^{2}\Big)
		\nonumber\\
	&&	+\frac{r}{2\Delta t}\Big(\left\vert
	\left\vert P_{h}^{n}-P_{h}^{n-1}\right\vert \right\vert ^{2}+\left\vert
	\left\vert P_{h}^{n}\right\vert \right\vert ^{2}-\left\vert \left\vert
	P_{h}^{n-1}\right\vert \right\vert ^{2}\Big) \nonumber\\
	&&
	+\frac{d}{\Delta t}(P_{h}^{n}-P_{h}^{n-1},\theta _{h}^{n})+\kappa
	\left\vert \left\vert \theta _{hx}^{n}\right\vert \right\vert ^{2}+\frac{d}{%
		\Delta t}(\theta _{h}^{n}-\theta _{h}^{n-1}, P_{h}^{n})+h\left\vert
	\left\vert P_{hx}^{n}\right\vert \right\vert ^{2}\nonumber\\
	&& +\frac{\alpha }{2\Delta t}%
	\Big(\left\vert \left\vert \psi _{hx}^{n}\right\vert \right\vert ^{2}-\left\vert
	\left\vert \psi _{hx}^{n-1}\right\vert \right\vert ^{2}\Big)\nonumber\\
	&&+\frac{\kappa }{2\Delta t}%
	\Big(\left\vert \left\vert \varphi _{hx}^{n}+\psi _{h}^{n}\right\vert
	\right\vert ^{2}-\left\vert \left\vert \varphi _{hx}^{n-1}+\psi
	_{h}^{n-1}\right\vert \right\vert ^{2}\Big)+\mu \left\vert \left\vert \Psi \right\vert \right\vert ^{2}.\nonumber
	\end{eqnarray}
	Take into the consideration that,
	\begin{eqnarray}
	 &&\frac{d}{\Delta t}(P_{h}^{n}-P_{h}^{n-1},\theta _{h}^{n})+\frac{d}{\Delta t}%
	(\theta _{h}^{n}-\theta _{h}^{n-1}, P_{h}^{n})\nonumber\\
	&&=\frac{d}{\Delta t}\Big((P_{h}^{n},\theta _{h}^{n})-(P_{h}^{n-1},\theta
	_{h}^{n-1})+(\theta _{h}^{n}-\theta _{h}^{n-1}, P_{h}^{n}-P_{h}^{n-1})\Big), \nonumber
	\end{eqnarray}
	and since
		\begin{eqnarray}
	\frac{c}{2\Delta t}\left\vert \left\vert \theta _{h}^{n}-\theta
	_{h}^{n-1}\right\vert \right\vert ^{2} +\frac{r}{2\Delta t}\left\vert
	\left\vert P_{h}^{n}-P_{h}^{n-1}\right\vert \right\vert ^{2}+\frac{d}{\Delta
		t}(\theta _{h}^{n}-\theta _{h}^{n-1}, P_{h}^{n}-P_{h}^{n-1})>0.\nonumber
	\end{eqnarray}
	Then,
	\begin{eqnarray}
	0\geq &&\frac{\rho _{1}}{2\Delta t}\Big(\left\vert \left\vert \Phi
	_{h}^{n}\right\vert \right\vert ^{2}-\left\vert \left\vert \Phi
	_{h}^{n-1}\right\vert \right\vert ^{2}\Big)+\frac{c}{2\Delta t}\Big(\left\vert \left\vert \theta
	_{h}^{n}\right\vert \right\vert ^{2}-\left\vert \left\vert \theta
	_{h}^{n-1}\right\vert \right\vert ^{2}\Big) 	\nonumber\\ 	
	&&+\frac{\rho _{1}\rho _{2}}{2k\Delta t^{3}}\Big(\left\vert \left\vert \Phi
	_{h}^{n}-\Phi _{h}^{n-1}\right\vert \right\vert ^{2}-\left\vert \left\vert
	\Phi _{h}^{n-1}-\Phi _{h}^{n-2}\right\vert \right\vert ^{2}\Big)+\frac{\rho _{1}}{2\Delta t}\Big(\left\vert
	\left\vert \Phi _{h, x}^{n}\right\vert \right\vert ^{2}-\left\vert \left\vert
	\Phi _{h, x}^{n-1}\right\vert \right\vert ^{2}\Big) 	\nonumber \\
	&&
	+\frac{r}{2\Delta t}\Big(\left\vert
	\left\vert P_{h}^{n}\right\vert \right\vert ^{2}-\left\vert \left\vert
	P_{h}^{n-1}\right\vert \right\vert ^{2}\Big)
	+\frac{d}{\Delta t}\Big((P_{h}^{n}, \theta _{h}^{n})-(P_{h}^{n-1},\theta
	_{h}^{n-1})\Big)\nonumber\\
	&&
 +\frac{\alpha }{2\Delta t}%
	\Big(\left\vert \left\vert \psi _{hx}^{n}\right\vert \right\vert ^{2}-\left\vert
	\left\vert \psi _{hx}^{n-1}\right\vert \right\vert ^{2}\Big)+\frac{\kappa }{2\Delta t}%
	\Big(\left\vert \left\vert \varphi _{hx}^{n}+\psi _{h}^{n}\right\vert
	\right\vert ^{2}-\left\vert \left\vert \varphi _{hx}^{n-1}+\psi
	_{h}^{n-1}\right\vert \right\vert ^{2}\Big), \nonumber
	\end{eqnarray}
	this completes the proof.
\end{proof}
\section{Priori error estimate}
In this section we obtain a priori error estimate on the numerical approximations, in which we obtain the convergence of the error.
\begin{theorem}
	Suppose that the solution $(\varphi, \psi, \theta, P)$ of (\ref{4}) belong to the space
	\[
	\left( H^4(0, T, H^{2}(0, L))\right) ^{4}.
	\]
	Then the following priori error estimate holds
	\begin{eqnarray}
	&&\left\vert \left\vert \Phi _{h}^{n}-\varphi _{t}(t_{n})\right\vert
	\right\vert ^{2}+\left\vert \left\vert \varphi _{h, x}^{n}-(\varphi
	(t_{n}))_{x}+\psi _{h}^{n}-\psi (t_{n})\right\vert \right\vert
	^{2}+\left\vert \left\vert \Psi _{h}^{n}-\psi _{t}(t_{n})\right\vert
	\right\vert ^{2}\nonumber\\
	&&+\left\vert \left\vert \psi _{h, x}^{n}-(\psi
	(t_{n}))_{x}\right\vert \right\vert ^{2}+\left\vert \left\vert \theta
	_{h}^{n}-\theta (t_{n})\right\vert \right\vert ^{2}+\left\vert \left\vert
	P_{h}^{n}-P(t_{n})\right\vert \right\vert ^{2}<c(\Delta t^{2}+h^{2}).\nonumber
	\end{eqnarray}
\end{theorem}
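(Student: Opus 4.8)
The plan is to carry out the standard energy argument for the error analysis of a fully discrete conforming finite element scheme: split each error into a finite element projection part and a discrete part lying in $S_0^h$, and then reproduce the discrete energy identity of Theorem~\ref{Th4} on the resulting error equations.

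First I would introduce the elliptic (Ritz) projection $R_h\colon H_0^1(0,L)\to S_0^h$, defined by $((R_hv)_x,\bar w_x)=(v_x,\bar w_x)$ for every $\bar w\in S_0^h$, together with the classical $P_1$ approximation estimates $\|v-R_hv\|\le Ch^2\|v\|_{H^2}$ and $\|(v-R_hv)_x\|\le Ch\|v\|_{H^2}$. For each of $\varphi,\ \Phi=\varphi_t,\ \psi,\ \Psi=\psi_t,\ \theta,\ P$ I would write the error at $t_n$ as $u_h^n-u(t_n)=\eta^n+\rho^n$, with $\rho^n:=R_hu(t_n)-u(t_n)$ the (controlled) projection error and $\eta^n:=u_h^n-R_hu(t_n)\in S_0^h$ the discrete error; by the triangle inequality it suffices to estimate the $\eta^n$.

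Next I would form the error equations by evaluating the continuous weak formulation $(\ref{1})$ at $t=t_n$, subtracting the scheme $(\ref{4})$, and inserting the splittings. Two families of consistency terms then appear on the right-hand side: the time-truncation terms $u_t(t_n)-\bar\partial u^n$, where $\bar\partial u^n=(u(t_n)-u(t_{n-1}))/\Delta t$, which are $O(\Delta t)$ in $L^2$ by Taylor expansion with integral remainder --- this is exactly why the hypothesis requires $H^4$ regularity in time, in order to bound $\varphi_{tt},\varphi_{ttt},\varphi_{tttt},\psi_{tt},\theta_{tt},P_{tt}$ --- and the projection errors $\rho^n,\bar\partial\rho^n$, which are $O(h^2)$ in $L^2$; since $R_h$ is the elliptic projection, the gradient terms $(\rho_x^n,\bar w_x)$ vanish identically, so only $L^2$-type projection errors survive. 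I would then test the error equations with the discrete error components, chosen exactly as the test functions in the proof of Theorem~\ref{Th4}, using the identity $\psi_{xt}=\frac{\rho_1}{\kappa}(\varphi_{tt})_t-\varphi_{txx}$ to treat the $\rho_2$ inertial coupling term. Reproducing that computation and applying Cauchy--Schwarz, Young and Poincar\'e inequalities to the right-hand data yields a discrete inequality of the form
\begin{equation*}
E_{\mathrm{err}}^n-E_{\mathrm{err}}^{n-1}+\Delta t\,\big(\kappa\|(\eta_\theta^n)_x\|^2+h\|(\eta_P^n)_x\|^2\big)\le C\,\Delta t\,E_{\mathrm{err}}^n+C\,\Delta t\,(\Delta t^2+h^2),
\end{equation*}
where $E_{\mathrm{err}}^n$ is the error energy built from the $\eta^n$ in the same way as $(\ref{Eh})$, its positivity resting on $cr-d^2>0$ as in $(\ref{con})$.

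Finally, for $\Delta t$ small the term $C\Delta t\,E_{\mathrm{err}}^n$ is absorbed on the left; summing over the time levels with $n\Delta t\le T$, using $E_{\mathrm{err}}^0\le Ch^2$ (the discrete initial data being taken as projections of $\varphi_0,\varphi_1,\psi_0,\psi_1,\theta_0,P_0$), and invoking the discrete Gronwall inequality gives $E_{\mathrm{err}}^n\le C(\Delta t^2+h^2)$ uniformly for $t_n\le T$; combining this with the interpolation bounds for $\rho^n$ via the triangle inequality produces the asserted estimate. I expect the main obstacle to be the consistency analysis of the inertial coupling: after the substitution for $\psi_{xt}$, the term $\frac{\rho_2}{\Delta t}(\Phi_h^n-\Phi_h^{n-1},\bar\psi_{h,x})$ generates a discrete third-order-in-time difference of $\Phi$ (telescoped as in Theorem~\ref{Th4}), whose truncation error must be estimated carefully, and it is precisely this step that forces the full $H^4$-in-time regularity; in addition, the thermodiffusion cross terms with coefficient $d$ have to be handled so that the matrix $\Lambda$ in $(\ref{eq9})$ stays positive definite throughout.
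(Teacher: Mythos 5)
Your proposal follows essentially the same route as the paper's own (admittedly sketchy) argument: the paper likewise splits each error as $u_h^n - P_h^0 u(t_n)$ plus a projection error, tests the difference of the scheme and the continuous weak formulation with the discrete errors $\widehat{e}^n,\widehat{q}^n,R^n,J^n$, uses the identity $\widehat{q}_x^{\,n}=\frac{\rho_1}{k}\delta^2\widehat{e}^{\,n}-\widehat{e}_{xx}^{\,n}$ to telescope the inertial coupling, controls the cross term $d(R^n,J^n)$ by Young's inequality under the positive-definiteness condition on $\Lambda$, and concludes by summation and Gronwall. Your write-up is, if anything, more explicit than the paper's outline about the consistency terms and the role of the $H^4$-in-time regularity.
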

\begin{proof}
	Let
		\begin{eqnarray}
	e^n=\varphi^n_h-P^0_h\varphi(t_n)  \nonumber
	\end{eqnarray}
	\begin{eqnarray}
	\widehat{e}_n=\widehat{\varphi}^n_h-P^0_h\varphi_t(t_n)  \nonumber
		\end{eqnarray}
			\begin{eqnarray}
	q^n=\psi^n_h-P^0_h\psi(t_n)  \nonumber
	\end{eqnarray}
	\begin{eqnarray}
	\widehat{q}_n=\widehat{\psi}_h^{n}-P^0_h\psi_t(t_n)  \nonumber
	\end{eqnarray}
	\begin{eqnarray}
	R^n=\theta _{h}^{n}-P_{h}^{0}\theta (t_{n})  \nonumber
	\end{eqnarray}
	\begin{eqnarray}
	J^{n}=P_{h}^{n}-P_{h}^{0}P(t_{n}). \nonumber
		\end{eqnarray}
	\textbf{Step 1: }Substitute in the scheme taking $\bar{\varphi}_{h}=\widehat{e}%
	^{n}$, $\bar{\psi}_{h}=\widehat{q}^{n},$ $\bar{\theta}_{h}=R^{n},$ $\bar{P}%
	_{h}=J^{n}.$
	\begin{eqnarray}
	\left\{
	\begin{array}{l}
	\frac{\rho _{1}}{\Delta t}(\widehat{e}^{n}+P_{h}^{0}\varphi _{t}(t_{n})-(\widehat{e}%
	^{n-1}+P_{h}^{0}\varphi _{t}(t_{n-1})), \widehat{e}^{n})+\kappa
	(e_{x}^{n}+(P_{h}^{0}\varphi (t_{n}))_{x}+q^{n}+P_{h}^{0}\psi (t_{n}), \widehat{e}%
	_{x}^{n})=0 \\
	\\
	\frac{\rho _{2}}{\Delta t}(\widehat{e}^{n}+P_{h}^{0}\varphi _{t}(t_{n})-(\widehat{e}%
	^{n-1}+P_{h}^{0}\varphi _{t}(t_{n-1})), \widehat{q}_{x}^{n})+\alpha
	(q_{x}^{n}+(P_{h}^{0}\psi (t_{n}))_{x}, \widehat{q}_{x}^{n})+ \\
	\kappa (e_{x}^{n}+(P_{h}^{0}\varphi (t_{n}))_{x}+q^{n}+P_{h}^{0}\psi (t_{n}),%
	\widehat{q}^{n})+\xi _{1}(R^{n}+P_{h}^{0}\theta (t_{n}), \widehat{q}_{x}^{n})+\xi
	_{2}(J^{n}+P_{h}^{0}P(t_{n}), \widehat{q}_{x}^{n})\\+\frac{\mu }{\Delta t}%
	(q^{n}+P_{h}^{0}\psi (t_{n})-(q^{n-1}+P_{h}^{0}\psi (t_{n-1})), \widehat{q}^{n})=0
	\\
	\\
	\frac{c}{\Delta t}(R^{n}+P_{h}^{0}\theta (t_{n})-(R^{n-1}+P_{h}^{0}\theta
	(t_{n-1})), R^{n})+\frac{d}{\Delta t}
	(J^{n}+P_{h}^{0}P(t_{n})-(J^{n-1}+P_{h}^{0}P(t_{n-1})), R^{n}) \\
	+\kappa (R_{x}^{n}+(P_{h}^{0}\theta (t_{n}))_{x}, R_{x}^{n})+\xi _{1}(\widehat{q}
	^{n}+P_{h}^{0}\psi _{t}(t_{n}), R_{x}^{n})=0 \\
	\\
	\frac{d}{\Delta t}(R^{n}+P_{h}^{0}\theta (t_{n})-(R^{n-1}+P_{h}^{0}\theta
	(t_{n-1})), J^{n})+\frac{r}{\Delta t}
	(J^{n}+P_{h}^{0}P(t_{n})-(J^{n-1}+P_{h}^{0}P(t_{n-1})), J^{n}) \\
	+h(J_{x}^{n}+(P_{h}^{0}P(t_{n}))_{x}, J_{x}^{n})+\xi _{2}(\widehat{q}
	^{n}+P_{h}^{0}\psi _{t}(t_{n}), J_{x}^{n})=0.
	\end{array}
	\right.
	\end{eqnarray}
	Take into consideration that
	\[
	\widehat{q}_{x}^{n}=\frac{\rho _{1}}{k}\delta ^{2}\widehat{e}^{n}-\widehat{e}_{xx}^{n},
	\]
	then
	\begin{eqnarray}
		&&(\widehat{e}^{n}-\widehat{e}^{n-1}, \frac{\rho _{1}}{k}\delta ^{2}\widehat{e}^{n}-\widehat{e%
		}_{xx}^{n}) \nonumber\\
		&=&\frac{\rho _{1}}{\Delta tk}(\widehat{e}^{n}-\widehat{e}^{n-1}, \delta (\widehat{e}^{n}-%
		\widehat{e}^{n-1}))+(\widehat{e}_{x}^{n}-\widehat{e}_{x}^{n-1}, \widehat{e}_{x}^{n}) \nonumber\\
		&=&\frac{\rho _{1}}{\Delta t^{2}k}(\left\vert \left\vert \widehat{e}^{n}-\widehat{e}%
		^{n-1}\right\vert \right\vert ^{2}-\left\vert \left\vert \widehat{e}^{n-1}-\widehat{e%
		}^{n-2}\right\vert \right\vert ^{2}-\Vert \widehat{e}^{n}-\widehat{e}^{n-1}-(\widehat{e}%
		^{n-1}-\widehat{e}^{n-2})\Vert^{2})+(\widehat{e}_{x}^{n}-\widehat{e}_{x}^{n-1}, \widehat{e}%
		_{x}^{n}) \nonumber \\
		&=&\frac{\rho _{1}}{2k}(\left\vert \left\vert \delta \widehat{e}^{n}\right\vert
		\right\vert ^{2}-\left\vert \left\vert \delta \widehat{e}^{n-1}\right\vert
		\right\vert ^{2}+\left\vert \left\vert \delta (\widehat{e}^{n}-\widehat{e}%
		^{n-1})\right\vert \right\vert ^{2})+\frac{1}{2}(\left\vert \left\vert \widehat{e}%
		_{x}^{n}\right\vert \right\vert ^{2}-\left\vert \left\vert \widehat{e}%
		_{x}^{n-1}\right\vert \right\vert ^{2}+\left\vert \left\vert \widehat{e}_{x}^{n}-%
		\widehat{e}_{x}^{n-1}\right\vert \right\vert ^{2}), \nonumber
	\end{eqnarray}
	and
	\begin{eqnarray}
		&&(P_{h}^{0}\varphi _{t}(t_{n})-P_{h}^{0}\varphi _{t}(t_{n-1}), \widehat{q}%
		_{x}^{n}) \nonumber\\
		&=&(P_{h}^{0}\varphi _{t}(t_{n})-P_{h}^{0}\varphi _{t}(t_{n-1}), \frac{\rho
			_{1}}{k}\delta ^{2}\widehat{e}^{n}-\widehat{e}_{xx}^{n}) \nonumber\\
		&=&(P_{h}^{0}\varphi _{t}(t_{n})-P_{h}^{0}\varphi _{t}(t_{n-1}), \frac{\rho
			_{1}}{k}\delta ^{2}\widehat{e}^{n}-\widehat{e}_{xx}^{n}) \nonumber\\
		&=&(P_{h}^{0}\varphi _{t}(t_{n})-P_{h}^{0}\varphi _{t}(t_{n-1}), \frac{\rho
			_{1}}{k}\delta ^{2}\widehat{e}^{n})+((P_{h}^{0}\varphi
		_{t}(t_{n}))_{x}-(P_{h}^{0}\varphi _{t}(t_{n-1}))_{x}, \widehat{e}_{x}^{n}).\nonumber
	\end{eqnarray}
	Then
	\begin{eqnarray}
	\left\{
	\begin{array}{ll}
  \frac{\rho _{1}}{\Delta t}(\left\vert \left\vert \widehat{e}^{n}-\widehat{e}
	^{n-1}\right\vert \right\vert ^{2}+\left\vert \left\vert \widehat{e}
	^{n}\right\vert \right\vert ^{2}-\left\vert \left\vert \widehat{e}
	^{n-1}\right\vert \right\vert ^{2})+\frac{\rho _{1}}{\Delta t}
	(P_{h}^{0}\varphi _{t}(t_{n})-P_{h}^{0}\varphi _{t}(t_{n-1}), e^{n}) \\
 + \kappa ((P_{h}^{0}\varphi (t_{n}))_{x}+P_{h}^{0}\psi (t_{n}), \widehat{e}
	_{x}^{n})+\kappa (e_{x}^{n}+q^{n}, \widehat{e}_{x}^{n})=0 \\ \\
  \frac{\rho _{2}}{\Delta t}(\frac{\rho _{1}}{2k}(\left\vert \left\vert \delta
	\widehat{e}^{n}\right\vert \right\vert ^{2}-\left\vert \left\vert \delta \widehat{e}
	^{n-1}\right\vert \right\vert ^{2}+\left\vert \left\vert \delta (\widehat{e}^{n}-
	\widehat{e}^{n-1})\right\vert \right\vert ^{2})+\frac{1}{2}(\left\vert \left\vert \widehat{e}
	_{x}^{n}\right\vert \right\vert ^{2}-\left\vert \left\vert \widehat{e}
	_{x}^{n-1}\right\vert \right\vert ^{2}+\left\vert \left\vert \widehat{e}_{x}^{n}-
	\widehat{e}_{x}^{n-1}\right\vert \right\vert ^{2}))  \\
	  +\frac{\rho _{2}}{\Delta t}(P_{h}^{0}\varphi _{t}(t_{n})-P_{h}^{0}\varphi
	_{t}(t_{n-1}), \frac{\rho _{1}}{k}\delta ^{2}\widehat{e}^{n}) +\frac{\rho _{2}}{
		\Delta t}((P_{h}^{0}\varphi _{t}(t_{n}))_{x}-(P_{h}^{0}\varphi
	_{t}(t_{n-1}))_{x}, \widehat{e}_{x}^{n})+\alpha ((P_{h}^{0}\psi (t_{n}))_{x}, \widehat{
		q}_{x}^{n})   \\
 	+\alpha (q_{x}^{n}, \widehat{q}_{x}^{n})+\kappa ((P_{h}^{0}\varphi
	(t_{n}))_{x}+P_{h}^{0}\psi (t_{n}), \widehat{q}^{n})+\kappa (e_{x}^{n}+q^{n}, \widehat{q}^{n}) +\xi _{1}(P_{h}^{0}\theta (t_{n}), \widehat{q}_{x}^{n})+\xi _{1}(R^{n},
	\widehat{q}_{x}^{n})   \\
 +\xi _{2}(P_{h}^{0}P(t_{n}), \widehat{q}_{x}^{n})+\xi _{2}(J^{n}, \widehat{q}
	_{x}^{n})+\frac{\mu }{\Delta t}
	(q^{n}+P_{h}^{0}\psi (t_{n})-(q^{n-1}+P_{h}^{0}\psi (t_{n-1})), \widehat{q}^{n})\\
	+\frac{\mu }{\Delta t}(q^{n}+P_{h}^{0}\psi (t_{n})-(q^{n-1}+P_{h}^{0}\psi (t_{n-1})), \widehat{q}^{n})=0\\ \\

	\frac{c}{2\Delta t}(\left\vert \left\vert R^{n}-R^{n-1}\right\vert
	\right\vert ^{2}+\left\vert \left\vert R^{n}\right\vert \right\vert
	^{2}-\left\vert \left\vert R^{n-1}\right\vert \right\vert ^{2})+\frac{c}{
		\Delta t}(P_{h}^{0}\theta (t_{n})-P_{h}^{0}\theta (t_{n-1}), R^{n}) \\ 	+\frac{d}{\Delta t}(P_{h}^{0}P(t_{n})-P_{h}^{0}P(t_{n-1}), R^{n})+ \frac{d}{
		\Delta t}(J^{n}-J^{n-1}, R^{n})+\kappa ((P_{h}^{0}\theta
	(t_{n}))_{x}, R_{x}^{n})+\kappa (R_{x}^{n}, R_{x}^{n})   \\
	 +\xi _{1}(P_{h}^{0}\psi _{t}(t_{n}), R_{x}^{n})+\xi _{1}(\widehat{q}
	^{n}, R_{x}^{n})=0
	\\ \\
	\frac{r}{2\Delta t}(\left\vert \left\vert J^{n}-J^{n-1}\right\vert
	\right\vert ^{2}+\left\vert \left\vert J^{n}\right\vert \right\vert
	^{2}-\left\vert \left\vert J^{n-1}\right\vert \right\vert ^{2})+\frac{r}{%
		\Delta t}(P_{h}^{0}P(t_{n})+P_{h}^{0}P(t_{n-1}), J^{n})  \\
+\frac{d}{\Delta t}(P_{h}^{0}\theta (t_{n})-P_{h}^{0}\theta
	(t_{n-1}), J^{n})+ \frac{d}{\Delta t}%
	(R^{n}-R^{n-1}, J^{n})+h((P_{h}^{0}P(t_{n}))_{x}, J_{x}^{n})+h(J_{x}^{n}, J_{x}^{n})
	 \\
+\xi _{2}(P_{h}^{0}\psi _{t}(t_{n}), J_{x}^{n})+\xi _{2}(\widehat{q}
	^{n}, J_{x}^{n})=0.
	\end{array}%
	\right.
	\end{eqnarray}	
	\textbf{Step 2: }Now let $\bar{\varphi}_{h}=\widehat{e}^{n}$, $\bar{\psi}_{h}=%
	\widehat{q}^{n},$ $\bar{\theta}_{h}=R^{n},$ $\bar{P}_{h}=J^{n}$ in (\ref{1}) and combine it with the previous system
	\begin{equation}
	\left\{
	\begin{array}{ll}
	\frac{\rho _{1}}{2\Delta t}(\left\vert \left\vert \widehat{e}^{n}-\widehat{e}
	^{n-1}\right\vert \right\vert ^{2}+\left\vert \left\vert \widehat{e}
	^{n}\right\vert \right\vert ^{2}-\left\vert \left\vert \widehat{e}
	^{n-1}\right\vert \right\vert ^{2})+\kappa (e_{x}^{n}+q^{n}, \widehat{e}_{x}^{n})
	\\
	=\rho _{1}(\varphi _{tt}-\frac{P_{h}^{0}\varphi _{t}(t_{n})-P_{h}^{0}\varphi
		_{t}(t_{n-1})}{\Delta t}, e^{n})+\kappa (\varphi _{x}+\psi
	-((P_{h}^{0}\varphi (t_{n}))_{x}+P_{h}^{0}\psi (t_{n})), \widehat{e}_{x}^{n}) \\
	\\
	\frac{\rho _{2}}{\Delta t}\frac{\rho _{1}}{2k}(\left\vert \left\vert \delta
	\widehat{e}^{n}\right\vert \right\vert ^{2}-\left\vert \left\vert \delta \widehat{e}
	^{n-1}\right\vert \right\vert ^{2}+\left\vert \left\vert \delta (\widehat{e}^{n}-
	\widehat{e}^{n-1})\right\vert \right\vert ^{2})+\frac{\rho _{2}}{2\Delta t}
	(\left\vert \left\vert \widehat{e}_{x}^{n}\right\vert \right\vert
	^{2}-\left\vert \left\vert \widehat{e}_{x}^{n-1}\right\vert \right\vert
	^{2}+\left\vert \left\vert \widehat{e}_{x}^{n}-\widehat{e}_{x}^{n-1}\right\vert
	\right\vert ^{2})\\
	+\alpha (q_{x}^{n}, \widehat{q}_{x}^{n})+\kappa (e_{x}^{n}+q^{n},
	\widehat{q}^{n})+\xi _{1}(R^{n}, \widehat{q}_{x}^{n})+\xi _{2}(J^{n}, \widehat{q}_{x}^{n})
	\\
	=\frac{\rho _{2}\rho _{1}}{k}(\varphi _{tt}-\frac{P_{h}^{0}\varphi
		_{t}(t_{n})-P_{h}^{0}\varphi _{t}(t_{n-1})}{\Delta t}, \delta ^{2}\widehat{e}
	^{n})+\rho _{2}(\varphi _{ttx}-\frac{(P_{h}^{0}\varphi
		_{t}(t_{n}))_{x}-(P_{h}^{0}\varphi _{t}(t_{n-1}))_{x}}{\Delta t}, \widehat{e}
	_{x}^{n})\\
	+\alpha (\psi _{x}-(P_{h}^{0}\psi (t_{n}))_{x}, \widehat{q}
	_{x}^{n})+\kappa (\varphi _{x}+\psi -((P_{h}^{0}\varphi
	(t_{n}))_{x}+P_{h}^{0}\psi (t_{n})), \widehat{q}^{n})+ \\
	\xi _{1}(\theta -P_{h}^{0}\theta (t_{n}), \widehat{q}_{x}^{n})+\xi
	_{2}(P-P_{h}^{0}P(t_{n}), \widehat{q}_{x}^{n}) \\
	\\
	\frac{c}{2\Delta t}(\left\vert \left\vert R^{n}-R^{n-1}\right\vert
	\right\vert ^{2}+\left\vert \left\vert R^{n}\right\vert \right\vert
	^{2}-\left\vert \left\vert R^{n-1}\right\vert \right\vert ^{2})+\frac{d}{
		\Delta t}(J^{n}-J^{n-1}, R^{n})+\kappa (R_{x}^{n}, R_{x}^{n})+\xi _{1}(\widehat{q}
	^{n}, R_{x}^{n}) \\
	=c(\theta _{t}-\frac{(P_{h}^{0}\theta (t_{n})-P_{h}^{0}\theta (t_{n-1}))}{%
		\Delta t}, R^{n})+d(P_{t}-\frac{P_{h}^{0}P(t_{n})-P_{h}^{0}P(t_{n-1})}{\Delta
		t}, R^{n})+\kappa (\theta _{x}-(P_{h}^{0}\theta (t_{n}))_{x}, R_{x}^{n})+ \\
	\xi _{1}(\psi _{t}-P_{h}^{0}\psi _{t}(t_{n}), R_{x}^{n}) \\
	\\
	\frac{r}{2\Delta t}(\left\vert \left\vert J^{n}-J^{n-1}\right\vert
	\right\vert ^{2}+\left\vert \left\vert J^{n}\right\vert \right\vert
	^{2}-\left\vert \left\vert J^{n-1}\right\vert \right\vert ^{2})+\frac{d}{
		\Delta t}(R^{n}-R^{n-1}, J^{n})+h(J_{x}^{n}, J_{x}^{n})+\xi _{2}(\widehat{q}
	^{n}, J_{x}^{n}) \\
	=r(P_{t}-\frac{P_{h}^{0}P(t_{n})-P_{h}^{0}P(t_{n-1})}{\Delta t}, J^{n})+d(\theta _{t}-\frac{(P_{h}^{0}\theta (t_{n})-P_{h}^{0}\theta
		(t_{n-1}))}{\Delta t}, J^{n})+h(P_{x}-(P_{h}^{0}P(t_{n}))_{x}, J_{x}^{n})+ \\
	\xi _{2}(\psi _{t}-P_{h}^{0}\psi _{t}(t_{n}), J_{x}^{n}).
	\end{array}
	\right.  \label{12}
	\end{equation}	
	\textbf{Step 3: } Summing equations of (\ref{12}) and let,
	\[
	Z_{n}=\frac{\rho _{1}\rho _{2}}{k}\left\vert \left\vert \delta \widehat{e}%
	^{n}\right\vert \right\vert ^{2}+\rho _{1}\left\vert \left\vert \widehat{e}%
	^{n}\right\vert \right\vert ^{2}+\rho _{2}\left\vert \left\vert \widehat{e}%
	_{x}^{n}\right\vert \right\vert ^{2}+\kappa \left\vert \left\vert
	e_{x}^{n}+q^{n}\right\vert \right\vert ^{2}+\alpha \left\vert \left\vert
	q_{x}^{n}\right\vert \right\vert ^{2}+c\left\vert \left\vert
	R^{n}\right\vert \right\vert ^{2}+r\left\vert \left\vert J^{n}\right\vert
	\right\vert ^{2}.
	\]
	By Young's inequality there is a positive constant $c$ such that

	Therefore
	$$Z_{n}+d(R^{n}, J^{n})-d(J^{n-1}, R^{n-1})<Z_{n-1}+2c\Delta t(Z_{n}+K_{n}).$$
	Then
	$$Z_{n}-Z_{n-1}+d(R^{n}, J^{n})-d(J^{n-1}, R^{n-1})<2c\Delta t(Z_{n}+K_{n}).$$
	Summing over $n$ we get	
	$$Z_{j}-Z_{0}+d(R^{j}, J^{j})-d(J^{0}, R^{0})<2c\Delta
	t\sum_{n=0}^{j}(Z_{n}+K_{n}).$$
	Since $Z_{0}=0$, we have
	$$Z_{j}<2c\Delta t\sum_{n=0}^{j}(Z_{n}+K_{n})-d(R^{j}, J^{j}).$$
	By applying Young's inequality
	$$Z_{j}<2c\Delta t\sum_{n=0}^{j}(Z_{n}+K_{n})+\frac{d\epsilon }{2}\left\vert
	\left\vert R^{j}\right\vert \right\vert ^{2}+\frac{d}{2\epsilon }\left\vert
	\left\vert J^{j}\right\vert \right\vert ^{2},$$
	where $\epsilon $ is chosen in the following way%
	\[
	\frac{2r}{d}>\epsilon >\frac{d}{2c},
	\]
	and
	$$CZ_{j}<2c\Delta t\sum_{n=1}^{j}Z_{n}+c\Delta t(\Delta t^{2}+h^{2}).$$
	Finally apply Grownwall' inequality and the proof is completed.
\end{proof}
\section{Numerical simulations}
In this section, we make 2 tests, the first test is done when the frictional damping with the vertical displacement. The second test is done when the frictional damping is done on angular rotation.
For both tests, we used the following data:
$h=0.0625, \Delta t=\frac{h	}{2}, \rho_1=\rho_2=k=\alpha=\xi_1=1, \xi_2=r=4\times 10^{-4}, h=0.03, c=1, \kappa=365, d=0.002$
Where all initial data function are taken to be $x^2(1-x)$. \\
\begin{figure}[h!]
	\centering
	\includegraphics[width=0.80\textwidth]{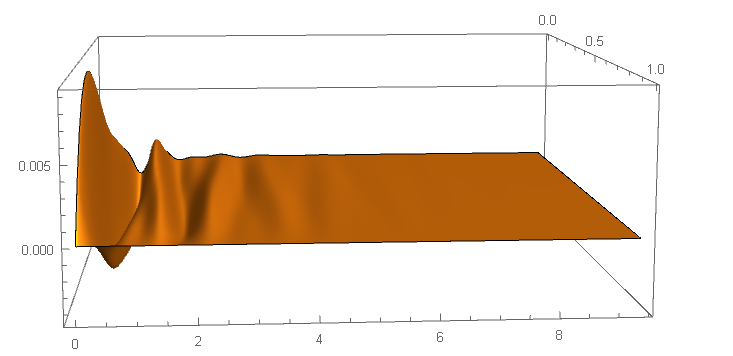}
	\caption{$\phi $ as function of $x$ and $t$.}
	\label{xip3}
\end{figure}
\begin{figure}[h!]
	\centering
	\includegraphics[width=0.80\textwidth]{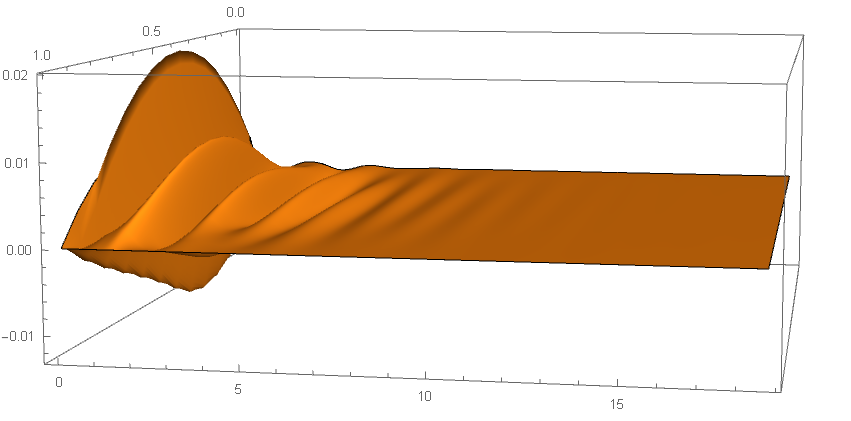}
	\caption{$\psi $ as function of $x$ and $t$.}
	\label{xip31}
\end{figure}	
\begin{figure}[h!]
	\centering
	\includegraphics[width=0.80\textwidth]{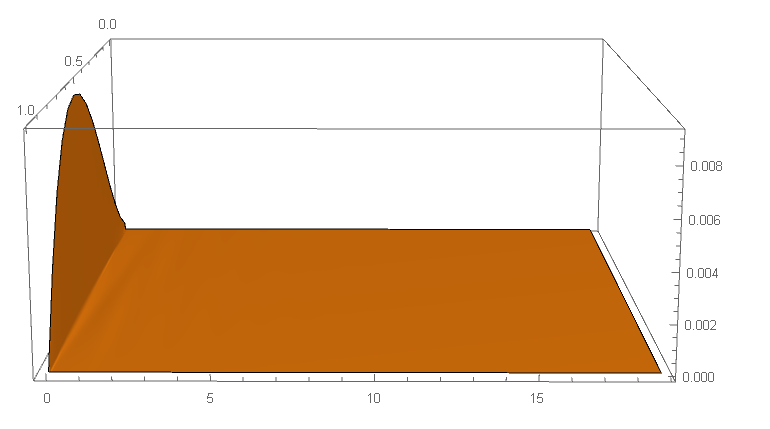}
	\caption{$\theta $ as function of $x$ and $t$.}
	\label{xip32}
\end{figure}
\begin{figure}[h!]
	\centering
	\includegraphics[width=0.80\textwidth]{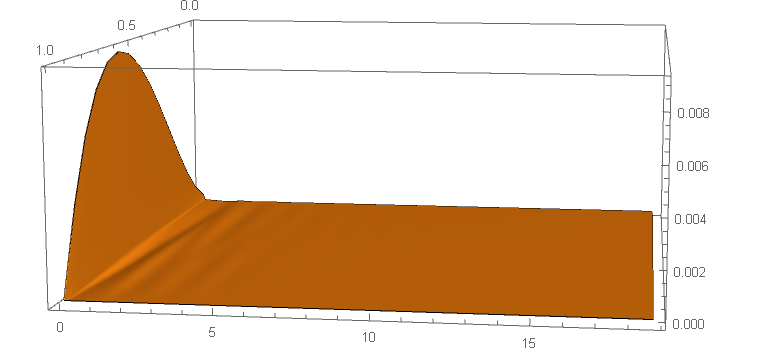}
	\caption{$p $ as function of $x$ and $t$.}
	\label{xip33}
\end{figure}
\begin{figure}[h!]
	\centering
	\includegraphics[width=0.80\textwidth]{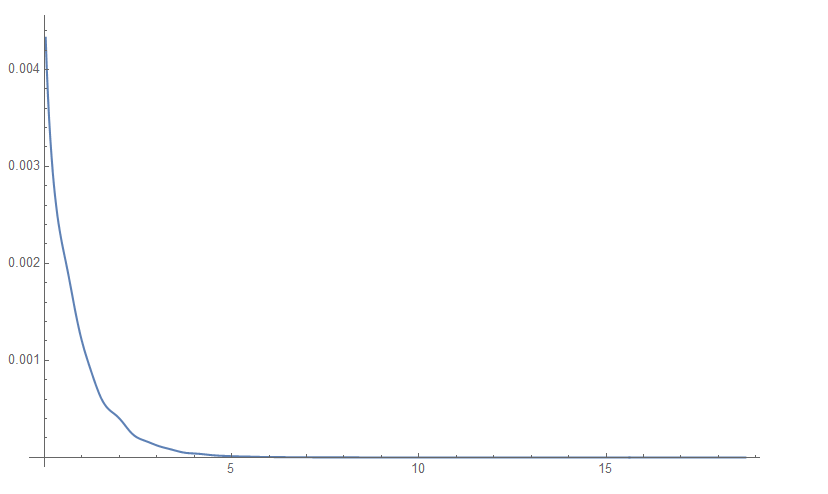}
	\caption{Energy as function of time.}
	\label{xip34}
\end{figure}
\begin{figure}[h!]
	\centering
	\includegraphics[width=0.80\textwidth]{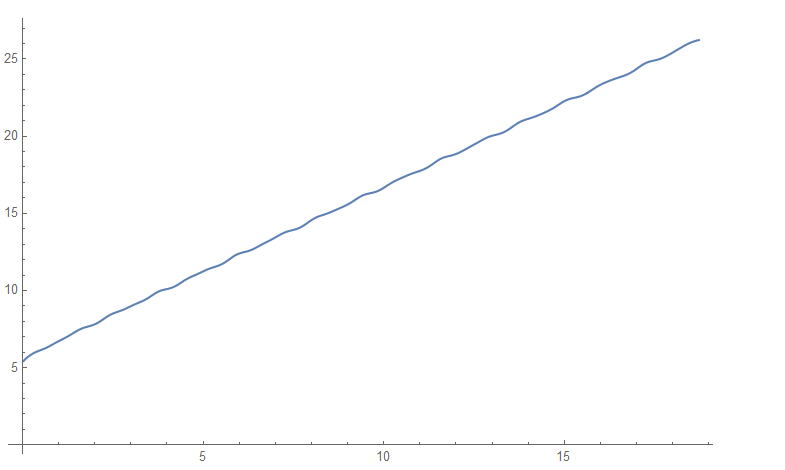}
	\caption{ $-log[energy] $ as function of time.}
	\label{xip35}
\end{figure}
\begin{figure}[h!]
	\centering
	\includegraphics[width=0.80\textwidth]{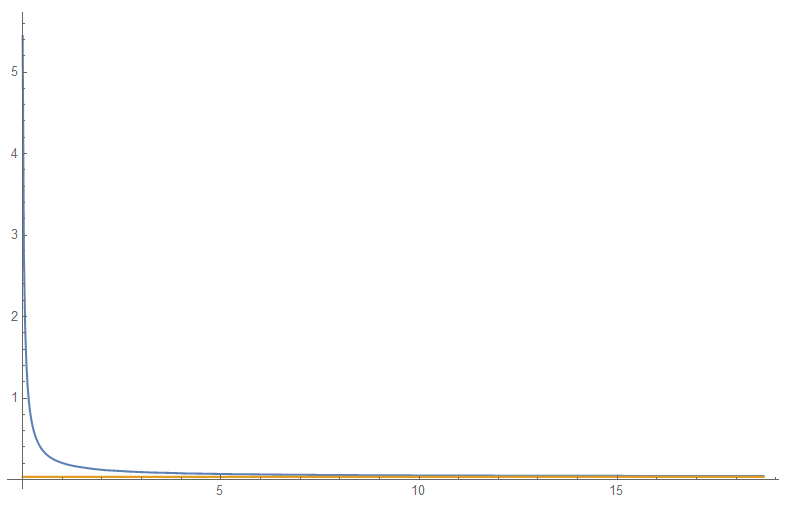}
	\caption{ Asymptotic behavior of $-log[energy]/t $ as function of time.}
	\label{xip36}
\end{figure}
\begin{figure}[h!]
	\centering
	\includegraphics[width=0.80\textwidth]{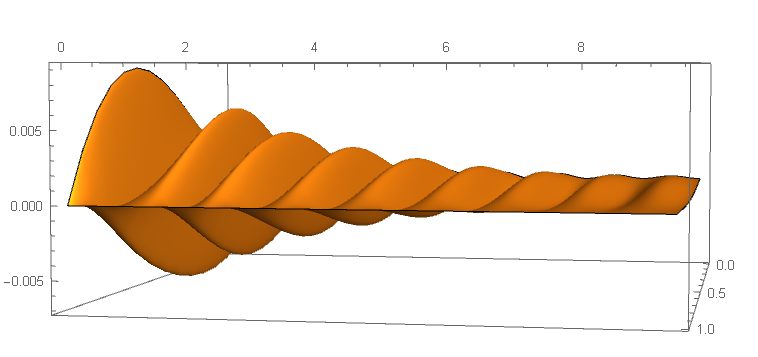}
	\caption{$\phi $ as function of $x$ and $t$.}
	\label{xip37}
\end{figure}
\begin{figure}[h!]
	\centering
	\includegraphics[width=0.80\textwidth]{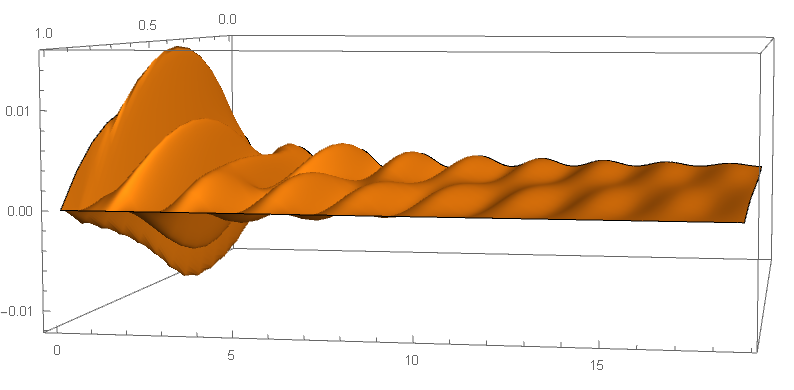}
	\caption{$\psi $ as function of $x$ and $t$.}
	\label{xip38}
\end{figure}	
\begin{figure}[h!]
	\centering
	\includegraphics[width=0.80\textwidth]{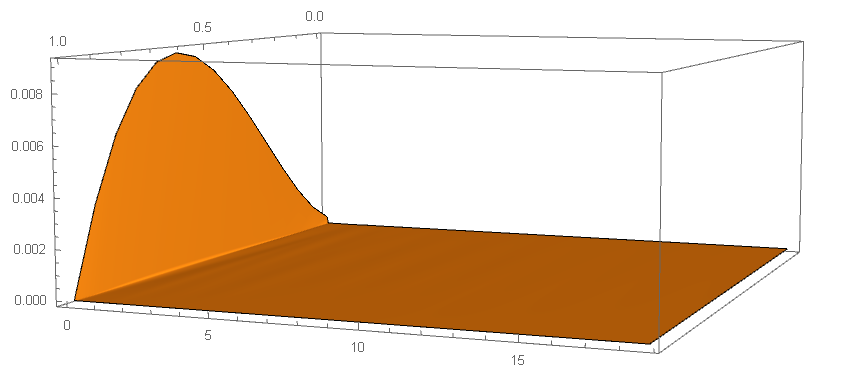}
	\caption{$\theta $ as function of $x$ and $t$.}
	\label{xip39}
\end{figure}
\begin{figure}[h!]
	\centering
	\includegraphics[width=0.80\textwidth]{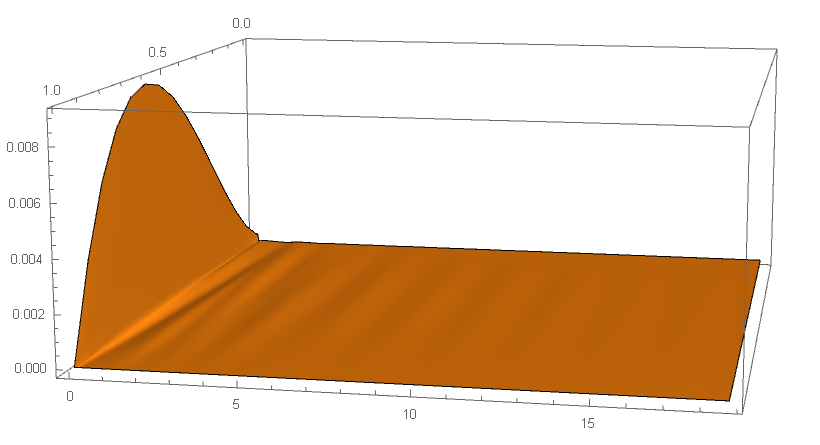}
	\caption{$p $ as function of $x$ and $t$.}
	\label{xip310}
\end{figure}
\begin{figure}[h!]
	\centering
	\includegraphics[width=0.80\textwidth]{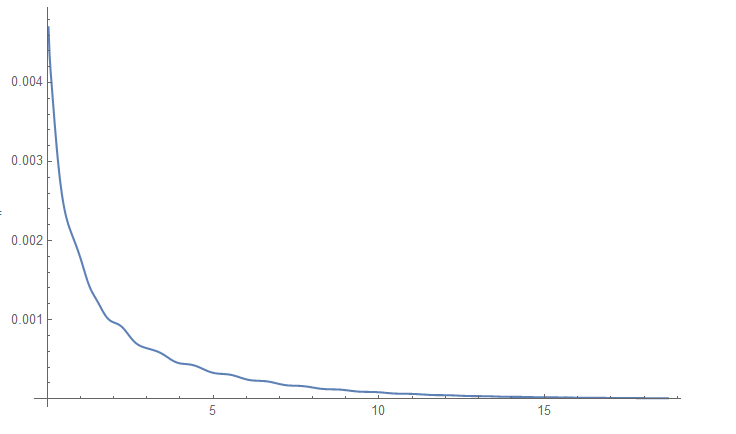}
	\caption{Energy as function of time.}
	\label{xip311}
\end{figure}
\begin{figure}[h!]
	\centering
	\includegraphics[width=0.80\textwidth]{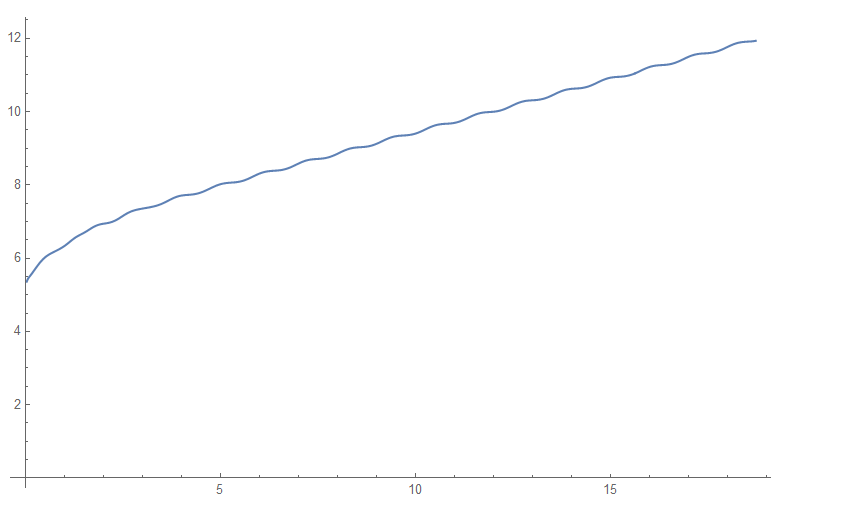}
	\caption{ $-log[energy] $ as function of time.}
	\label{xip312}
\end{figure}
\begin{figure}[h!]
	\centering
	\includegraphics[width=0.80\textwidth]{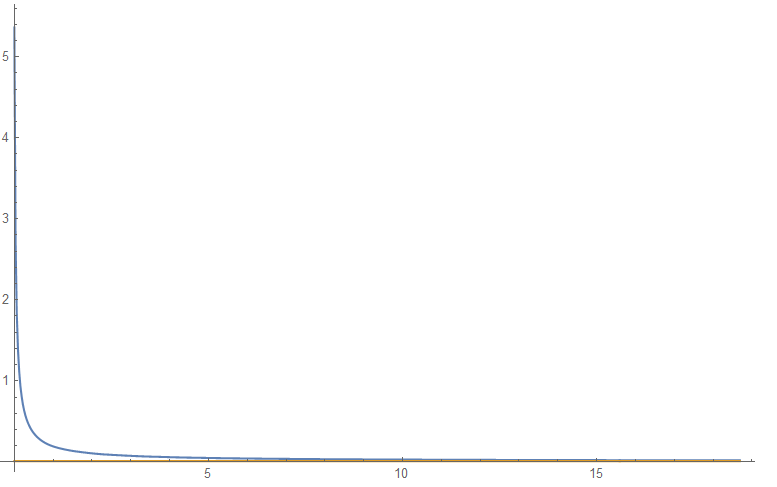}
	\caption{ Asymptotic behavior of $-log[energy]/t $ as function of time.}
	\label{xip313}
\end{figure}[h!]
\newpage
\begin{remark}
	 \begin{enumerate}  
	 	\item {\bf First test:} Figures 1-7\\
	 	 Figures 6 and 7 verify Theorem 3.10. 	 	
	 	\item {\bf Second test:} Figures 8-14 \\
 	Figures 13 and 14 verify Theorem 3.5.
	 \end{enumerate}
\end{remark} 	

\begin{remark}
	This article is a result of a joint research team that was proposed in April 2020. After the completion, correction and revision of the article, and the preparation of the final version, the authors are surprised at the publication of an article studied a similar problem \cite{Ramos2021}. We felt it necessary to highlight the points of difference between the two articles, which consist of the following:
	\begin{enumerate}
		\item  In \cite{Ramos2021}, the authors did not treated the numerical part.
		\item  The exponential stability estimate in our manuscript is taken on the frictional damping case in both angular rotation and in the vertical displacement which was not the case in \cite{Ramos2021}.
	\end{enumerate}
\end{remark}
\subsection*{Acknowledgement} For any decision, the authors would like to thank the anonymous referees and the handling editor for their careful reading and for relevant remarks/suggestions to improve the paper.

    \end{document}